\newtheorem{theorem}{Theorem}[section]
\newtheorem{lemma}[theorem]{Lemma}
\theoremstyle{definition}
\newtheorem{example}[theorem]{Example}
\theoremstyle{remark}
\numberwithin{equation}{section}
\DeclareMathOperator{\conv}{conv}
\DeclareMathOperator{\vertop}{vert}
\DeclareMathOperator{\aff}{aff}
\newcommand{\Rd}{\mathbb{R}^d}
\newcommand{\Pzd}{\mathcal{P}^d_0}
\newtheorem{corollary}[theorem]{Corollary}
\newtheorem{question}[theorem]{Question}
\begin{document}

\title{Self-polar polytopes}

\author{Alathea Jensen}


\affil{Department of Mathematics and Computer Science\\
Susquehanna University}

\date{April 17, 2018}

%

\maketitle

\begin{abstract}
Self-polar polytopes are convex polytopes that are equal to an orthogonal transformation of their polar sets.  These polytopes were first studied by Lov\'{a}sz as a means of establishing the chromatic number of distance graphs on spheres, and they can also be used to construct triangle-free graphs with arbitrarily high chromatic number.  We investigate the existence, construction, facial structure, and practical applications of self-polar polytopes, as well as the place of these polytopes within the broader set of self-dual polytopes.
\end{abstract}

\section*{Acknowledgments}

This is a pre-print of an article accepted for publication in the Contemporary Mathematics series of the AMS.

\section[Introduction]{Introduction}

Convex polytopes are fundamental objects in the field of discrete geometry that have been studied since ancient times.  They arise naturally as the feasible sets of systems of linear inequalities, and are also valued for their ability to encode complex combinatorial information of various sorts.

The faces of a polytope are its vertices, edges, facets, and so on, and together they can be arranged in a lattice, partially ordered by inclusion, which is known as the face lattice.  The face lattice is also the combinatorial type of the polytope, so that many different polytopes which are not equal as sets in real space may nevertheless have the same combinatorial type.

A great deal has been written about the ways in which a combinatorial type can be realized, including characterizations of realization spaces \cite{mnev}, discussions of whether certain combinatorial types can be realized with rational coordinates \cite{sturmfels}, and procedures for determining whether a given lattice is the face lattice of a polytope \cite{bokowski}.

All polytopes have a dual polytope whose face lattice is the dual of the original's face lattice, and some polytopes are also self-dual.  Much study has also been given to self-dual polytopes, including the enumeration of types in low dimensions \cite{dillencourt}, the discovery that self-duality is not necessarily involutory \cite{ashley}, and the classification of self-dualities into internal and external types \cite{cunmixer}.

No one has yet devoted any major study, however, to the topic of realizations of self-dual polytopes.  In this article, we examine this topic from the point of view of self-polar polytopes.  Self-polar is a term we have coined to describe any set that is an orthogonal transformation of its own polar set.  Thus, self-polar polytopes are a subset of self-dual polytopes.  They include as a subset the strongly self-dual polytopes of Lov\'{a}sz \cite{lovasz}, whose work inspired this study.

This article will investigate the basic properties of self-polar polytopes: their existence, construction, facial structure, symmetries, and applications.  We will focus in particular on polytopes that are equal to the negative of their polar sets.  Our ultimate question is whether all self-dual polytopes are self-polar.

We begin with the necessary definitions and preliminary information in Section \ref{sec:defs}, then discuss some properties of the orthogonal transformations in Section \ref{sec:selfpolarity}.  In Section \ref{sec:lowdims}, we discuss self-polar polytopes in two and three dimensions, and in Section \ref{sec:higherandlower} we describe ways to build self-polar polytopes in higher dimensions from ones in lower dimensions, and vice versa.  In Section \ref{sec:vertexnumbers}, we characterize the vertex numbers of negatively self-polar polytopes.  In Section \ref{sec:modifications}, we describe a way to construct a self-polar polytope from a polytope that is contained in an orthogonal transformation of its polar set, as well as a way to add vertices to a self-polar polytope while maintaining self-polarity.  In Section \ref{sec:applications}, we discuss applications of self-polar polytopes to graph coloring and to indicator function algebras. Finally, in Section \ref{sec:final}, we summarize our findings and propose future work.

\section{Definitions and Preliminaries}\label{sec:defs}

\subsection{Polytopes}
A \emph{polytope}  is any subset of real space $P\subset\Rd$ which can be described as the convex hull of a finite set of points in $\Rd$.  Although many different sets of points may yield the same convex hull, we can always find a set $\{v_1,v_2,\dots,v_n\}\in\Rd$ which is minimal in the sense that leaving out any $v_i$ would not yield $P$.  These points are called the \emph{vertices} of the polytope and are denoted $\mathrm{vert}(P)$.

By the \emph{dimension} of a polytope, denoted $\mathrm{dim}(P)$, we mean the dimension of the affine span of the polytope.  We will not necessarily assume that a polytope $P\subset\Rd$ is $d$-dimensional, but if it is, then we will say that $P$ is \emph{full-dimensional}.

In addition to being described by their vertices, all polytopes can also be described as the intersection of a finite number of closed halfspaces $\{H_1,H_2,\dots,H_m\}$.  A closed \emph{halfspace} $H$ is the closure of the solution set to a linear inequality in $\Rd$; that is, $H=\{x\in\Rd : \langle a,x\rangle\leq b\}$ for some $a\in\Rd$ and $b\in\mathbb{R}$.

Likewise, any finite intersection of halfspaces which is bounded is a polytope.  By bounded, we mean not containing any sequence of points that tend to infinity.  A finite intersection of halfspaces which is not bounded is called \emph{polyhedral}, but it is not considered a polytope.

The \emph{boundary of a halfspace} $H$, which we will denote $\partial H$, is the set of points in $H$ that satisfy the linear inequality with equality; that is, $\partial H=\{x\in\Rd : \langle x,a\rangle = b\}$.

\subsection{Faces}

Suppose for a polytope $P\in\Rd$ that we have a minimal list of its  closed halfspaces, $\{H_1,H_2,\dots,H_m\}$.  By minimal, we mean that to leave any $H_i$ out of the intersection would not yield $P$.  The \emph{boundary of the polytope}, which we will denote $\partial P$, is the set of points in $P$ which belong to one or more of the halfspace boundaries.  In other words, $\partial P=P\cap(\partial H_1\cup\partial H_2\dots\cup\partial H_m)$.

A \emph{face} $f$ of the polytope $P$ is any subset $f\subseteq P$ which can be written as $f=P\cap\partial H_{i_1}\cap\partial H_{i_2}\cap\dots\cap\partial H_{i_j}$ for some subset $\{H_{i_1},H_{i_2},\dots,H_{i_j}\}$ of the halfspaces.  If the subset of halfspaces is empty, then we get $P$ itself.  If we use only one halfspace, $P\cap\partial H_i$, then we get a face of one less dimension than $P$, called a \emph{facet}. 

Faces of two dimensions less than $P$ are called \emph{ridges}, faces of one dimension are called \emph{edges}, and faces of zero dimensions are the  vertices.  The empty set is also a face, and is considered to have $-1$ dimension.

\subsection{The Face Lattice}

The faces of $P$ form a partially ordered set under the subset relation.  In fact, the poset is a graded lattice, called the \emph{face lattice}, which we will denote $\mathcal{F}(P)$.  This lattice has $P$ as the maximum element, $\emptyset$ as the minimal element, and all elements graded by their dimensions.  Just like $P$ itself, each element of the lattice can be described either by the set of halfspaces whose boundaries it is contained in, or by the set of vertices that it contains.

A polytope's face lattice is its \emph{combinatorial type}, so that any two polytopes whose face lattices are isomorphic have the same combinatorial type.  Conversely, a polytope $P\subset\Rd$ with a particular combinatorial type is said to be a \emph{realization} of that combinatorial type.

The \emph{$f$-vector} of a polytope $P$, denoted $f(P)=(f_0,\dots,f_{d-1})$, records how many faces there are of each dimension.  Each component $f_i$ is the number of $i$-dimensional faces of $P$.  The Euler-Poincar\'{e} formula (see \cite{ziegler}) tells us that for any polytope $P$, $f(P)$ satisfies
\[-f_{-1}+f_0-f_1+f_2-\dots+(-1)^d f_d=0\]

Here, $f_{-1}$ and $f_d$ are defined analogously to the elements of the $f$-vector as the number of $(-1)$-dimensional faces and the number of $d$-dimensional faces respectively.

\subsection{Dual Polytopes}
The \emph{dual of a lattice} $\mathcal{L}$ is another lattice $\mathcal{L}^*$ which is the same except that the relation has been reversed.  In other words, there exists a bijection $\phi :\mathcal{L}\to\mathcal{L}^*$ which is ordering-reversing, so that for all $f,g\in\mathcal{L}$, $f\subset g$ if and only if $\phi(g)\subset\phi(f)$.  Such a bijection is known as a \emph{dual isomorphism}.

A \emph{self-dual lattice} is a lattice that is its own dual.  In the case of self-duality, the order-reversing bijection $\phi$ is a \emph{dual automorphism}.  The \emph{rank} of a dual automorphism is its order or period when regarded as a map; in other words, the least integer $r$ such that $\phi^r$ is the identity.

Two polytopes are said to be \emph{dual} to one another if and only if their face lattices are dual to one another.  Note that while realizations of polytopes can be dual to one another, the \emph{dual of a polytope} refers to a combinatorial type, rather than to any particular realization.  A \emph{self-dual polytope} is a polytope whose face lattice is self-dual.

It is far from obvious that the dual of every polytope's face lattice can be realized as a polytope, but this is indeed the case (see \cite{grunbaum} and \cite{ziegler}).

\subsection{The Polar Operation}

The \emph{polar} of a set $A\subseteq\Rd$ is $A^\circ=\{x\in\Rd:\langle x,a\rangle\leq 1 \textup{ for all } a\in A\}$.  As the polar operation forms the basis for the majority of the research in this article, we will list some of its properties in Lemma \ref{polarproperties}.

Before listing the polar's properties, however, we need to define one last piece of notation that will be of much utility throughout this article.

For a set $A\subseteq\Rd$, we will use $[A]$ to denote the closure of the convex hull of $A$ with the origin; that is, $[A]=\mathrm{closure}(\conv(A\cup\{0\}))$.  By closure, we mean the inclusion of all limit points under the Euclidean metric.   We will refer to this operation as the \emph{polar closure}, and we will say that $A$ is \emph{closed with respect to the polar}.  The polar closure operation is indeed a closure operation because Euclidean closure, convex hull, and union with zero are all themselves closure operations.

\begin{lemma}\label{polarproperties}
For any $A,B\subseteq\Rd$,
\begin{enumerate}
\item $A^\circ=[A^\circ]$.
\item $A^\circ=[A]^\circ$.
\item $A^{\circ\circ}=[A]$.
\item $A^{\circ\circ\circ}=A^\circ$.
\item $A\subset B\implies B^\circ\subset A^\circ$
\item $(A\cup B)^\circ=A^\circ\cap B^\circ$
\item $(A\cap B)^\circ=[A^\circ\cup B^\circ]$
\end{enumerate}
\end{lemma}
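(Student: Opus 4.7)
The plan is to prove the seven properties in dependency order so each builds on its predecessors. I would start with (5), (6), and (1), all of which are one-line consequences of unfolding the definition: (5) holds because imposing ``$\langle x,a\rangle\le 1$'' on a larger set is more restrictive; (6) because the quantifier ``for all $a\in A\cup B$'' splits as a conjunction of the two separate conditions; and (1) because $A^\circ$ is always an intersection of closed halfspaces containing the origin (since $\langle 0,a\rangle=0\le 1$), hence already equal to its own polar closure. Property (2) then follows by applying (5) to $A\subset [A]$ for the inclusion $[A]^\circ\subset A^\circ$, and for the reverse, given $x\in A^\circ$ and $b\in[A]$, I would write $b$ as a limit of convex combinations of points in $A\cup\{0\}$ and use the bilinearity and continuity of the inner product to transport the bound $\langle x,a\rangle\le 1$ to $\langle x,b\rangle\le 1$.

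The crux of the lemma is property (3), the bipolar identity $A^{\circ\circ}=[A]$, and this is where I expect the main obstacle to lie. The easy inclusion $[A]\subset A^{\circ\circ}$ follows because $A\subset A^{\circ\circ}$ is a direct definitional check, and (1) applied to $A^\circ$ shows that $A^{\circ\circ}$ is itself polar-closed, so it must contain $[A]$. For the nontrivial direction $A^{\circ\circ}\subset[A]$, I would invoke the Hahn--Banach hyperplane separation theorem: for any $x\notin [A]$, since $[A]$ is closed, convex, and contains $0$, there is a hyperplane $\{z:\langle y,z\rangle=c\}$ strictly separating $x$ from $[A]$, with $\langle y,a\rangle\le c$ for all $a\in[A]$ and $\langle y,x\rangle>c$. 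Since $0\in[A]$ forces $c\ge 0$, a rescaling by $1/c$ (together with a small perturbation in the degenerate case $c=0$) yields a vector $y'\in[A]^\circ=A^\circ$ (using (2)) with $\langle y',x\rangle>1$, witnessing $x\notin A^{\circ\circ}$.

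Properties (4) and (7) then follow as formal consequences. For (4), substituting $A^\circ$ for $A$ in (2) and using (1) gives $A^{\circ\circ\circ}=[A^\circ]^\circ=A^\circ$. For (7), the inclusion $[A^\circ\cup B^\circ]\subset(A\cap B)^\circ$ comes from applying (5) to both $A\cap B\subset A$ and $A\cap B\subset B$ to get $A^\circ\cup B^\circ\subset (A\cap B)^\circ$, after which (1) lets us take the polar closure of the left side. The reverse inclusion is obtained by dualizing: applying (6) to $A^\circ\cup B^\circ$ and then (3) gives $(A^\circ\cup B^\circ)^\circ=A^{\circ\circ}\cap B^{\circ\circ}=[A]\cap[B]$, and one further polar combined with (3) converts this into the desired containment.
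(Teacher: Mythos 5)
The paper itself does not prove this lemma (it refers to Gr\"unbaum and Ziegler), so your proposal is filling a real gap, and the dependency order you chose is a reasonable way to do it. Properties (1), (2), (5), (6), and the bipolar theorem (3) via hyperplane separation are all handled correctly. The written derivation of (4) is slightly garbled as stated (substituting $A^\circ$ into (2) and then applying (1) just returns $A^{\circ\circ}$ on both sides), but the intended conclusion is a one-liner from the tools you already built: $A^{\circ\circ\circ}=(A^{\circ\circ})^\circ=[A]^\circ=A^\circ$ by (3) then (2).

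The genuine gap is in the reverse inclusion of (7). Following your own dualization through: (6) and (3) give $(A^\circ\cup B^\circ)^\circ=[A]\cap[B]$, and one more polar together with (3) gives $[A^\circ\cup B^\circ]=\bigl([A]\cap[B]\bigr)^\circ$. To conclude $(A\cap B)^\circ\subseteq\bigl([A]\cap[B]\bigr)^\circ$ you would need, via (5), the containment $[A]\cap[B]\subseteq A\cap B$; but the only containment available in general is the opposite one, $A\cap B\subseteq[A]\cap[B]$, so your dualization reproduces the easy inclusion rather than the reverse one. This is not repairable at the stated level of generality, because (7) is false for arbitrary $A,B\subseteq\Rd$: take $A=\{(1,0),(0,1)\}$ and $B=\{(1,1)\}$ in $\mathbb{R}^2$. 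Then $A\cap B=\emptyset$, so $(A\cap B)^\circ=\mathbb{R}^2$; on the other hand the coordinate sum $x_1+x_2$ is at most $2$ on $A^\circ$, at most $1$ on $B^\circ$, hence at most $2$ on $\conv(A^\circ\cup B^\circ\cup\{0\})$ and on its closure, so $[A^\circ\cup B^\circ]\neq\mathbb{R}^2$. Property (7) does hold under the additional hypothesis $A=[A]$ and $B=[B]$ (then $[A]\cap[B]=A\cap B$ and your dualization closes), and that is in fact the only setting in which the paper ever invokes (7); the right fix is to state (7) with that hypothesis and verify it is met at each use.
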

Standard references for these and other properties are \cite{grunbaum} and \cite{ziegler}.

\begin{lemma}\label{polarofmatrixlemma}
For any $A\subseteq\Rd$ and an invertible matrix $M\in\mathbb{R}^{d\times d}$, $(MA)^\circ=M^{-T}A^\circ$.  In particular, if $M$ is orthogonal, then $(MA)^\circ=MA^\circ$.
\end{lemma}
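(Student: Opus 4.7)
The plan is to prove this by directly unpacking the definition of the polar set and exploiting the transpose-adjoint relationship for the inner product. The main observation is that $\langle x, Ma\rangle = \langle M^T x, a\rangle$ for all $x,a\in\mathbb{R}^d$, which converts a condition involving $Ma$ into one involving $a$ alone.

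First I would write out membership in $(MA)^\circ$ explicitly:
\begin{equation*}
x \in (MA)^\circ \iff \langle x, Ma\rangle \leq 1 \text{ for all } a\in A.
\end{equation*}
Using the adjoint identity, this is equivalent to $\langle M^T x, a\rangle \leq 1$ for all $a\in A$, which by the definition of $A^\circ$ says exactly that $M^T x \in A^\circ$. Since $M$ is invertible, so is $M^T$, and we can rewrite this as $x \in (M^T)^{-1} A^\circ = M^{-T} A^\circ$. This establishes the first equality with a set containment argument in each direction (the steps are actually iff statements, so one chain suffices).

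For the orthogonal case, I would simply note that orthogonality means $M^T M = I$, hence $M^{-1} = M^T$, and therefore $M^{-T} = (M^{-1})^T = (M^T)^T = M$. Substituting into the first part immediately gives $(MA)^\circ = M A^\circ$.

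The main obstacle here is really just bookkeeping: making sure the adjoint identity is applied in the correct direction and that the inverse transpose is handled cleanly. No topological or convexity machinery is needed, since the polar is defined by a family of linear inequalities that transform covariantly under linear change of variables. The argument does not require $A$ to be convex, closed, or bounded, which makes the statement as general as possible for use later in the paper.
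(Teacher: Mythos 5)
Your proposal is correct and follows essentially the same approach as the paper's proof: both rely on the adjoint identity $\langle x, Ma\rangle = \langle M^T x, a\rangle$ to unwind the definition of the polar, then conclude via the invertibility of $M^T$ and, for the orthogonal case, the identity $M^{-T}=M$. The only superficial difference is that you phrase it as a chain of biconditionals on membership, while the paper performs the change of variable $y = M^T x$ directly in the set-builder notation.
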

\begin{proof}
\begin{align*}
(MA)^\circ &= \{x\in\Rd : \langle x,Ma\rangle\leq 1 \textup{ for all } a\in A\}\\
&= \{x\in\Rd : \langle M^Tx,a\rangle\leq 1 \textup{ for all } a\in A\}
\end{align*}
Then, if we let $y=M^Tx$, so that $x=M^{-T}y$, this becomes
\begin{align*}
(MA)^\circ &= \{M^{-T}y\in\Rd : \langle y,a\rangle\leq 1 \textup{ for all } a\in A\}\\
&= M^{-T}\{y\in\Rd : \langle y,a\rangle\leq 1 \textup{ for all } a\in A\}\\
&= M^{-T}A^\circ
\end{align*}
As for the second statement, an orthogonal matrix $M$ is defined by the property that $M^{-T}=M$.
\end{proof}

\subsection{Polar Polytopes}

Now we return to our discussion of polytopes and note a few further properties of the polar as it applies to polytopes.

Let $\Pzd$ denote the set of all $d$-dimensional polytopes in $\Rd$ that contain the origin in their interior.  Note that $P=[P]$ for all $P\in\Pzd$, which is precisely why this set of polytopes is of interest.

It should be clear that every polytope can be realized as an element of $\Pzd$ for some value of $d$.  To obtain such a realization for a given polytope, we simply restrict the ambient space to the polytope's affine span to make it full-dimensional, and then translate the polytope so that the origin is in its interior.

\begin{lemma}
The polar operation is an involution on $\Pzd$; that is, if $P\in\Pzd$, then $P^\circ\in\Pzd$ and $P^{\circ\circ}=P$.
\end{lemma}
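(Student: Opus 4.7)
The plan is to establish the two claims of the lemma separately. For the identity $P^{\circ\circ} = P$, I would immediately invoke part (3) of Lemma \ref{polarproperties}, which gives $P^{\circ\circ} = [P]$. Since a polytope is closed and convex by construction and $P \in \Pzd$ already contains the origin, the polar closure operation leaves $P$ unchanged: $[P] = \cl(\conv(P \cup \{0\})) = P$. This disposes of the second assertion with essentially no work beyond citing the general property.

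The bulk of the proof lies in showing $P^\circ \in \Pzd$, which requires four subclaims: that $P^\circ$ is the intersection of finitely many half-spaces, that it is bounded, that it contains the origin in its interior, and that it is full-dimensional. For the first, I would expand the definition using $P = \conv(\vertop(P))$ together with parts (2) and (6) of Lemma \ref{polarproperties}, so that
\[
P^\circ = [P]^\circ = \bigl(\conv(\vertop(P))\bigr)^\circ = \bigcap_{v \in \vertop(P)} \{x \in \Rd : \langle x, v \rangle \leq 1\},
\]
which is a finite intersection of half-spaces each containing the origin.

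The main obstacle is showing boundedness, since a polyhedral set needs to be bounded to qualify as a polytope. Here I would exploit the fact that $P \in \Pzd$ contains an open ball $B(0,r)$ around the origin, hence by part (5) of Lemma \ref{polarproperties} (monotonicity under reverse inclusion) we have $P^\circ \subseteq B(0,r)^\circ$; a direct computation gives $B(0,r)^\circ = B(0,1/r)$, so $P^\circ$ is bounded and therefore a polytope. Conversely, since $P$ itself is bounded, $P \subseteq B(0,R)$ for some $R > 0$, and the same monotonicity yields $B(0,1/R) \subseteq P^\circ$, placing the origin in the interior of $P^\circ$. Containing an open ball automatically makes $P^\circ$ full-dimensional, so all four conditions for membership in $\Pzd$ are satisfied.

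I expect the only delicate point is the interplay between the open-ball and closed-ball versions of the polar computation, which is straightforward but worth handling carefully; everything else follows mechanically from the properties already catalogued in Lemma \ref{polarproperties} and Lemma \ref{polarofmatrixlemma}.
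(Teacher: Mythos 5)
Your proof is correct and takes essentially the same approach as the paper's: cite Lemma \ref{polarproperties}(3) for $P^{\circ\circ}=P$ (since $P=[P]$ when $P\in\Pzd$), express $P^\circ$ as a finite intersection of half-spaces via the vertices, and derive boundedness from the ball $B(0,\varepsilon)\subseteq P$. The only cosmetic difference is that the paper proves boundedness by a direct inequality chain ($y\in P^\circ$, $\tfrac{y}{|y|}\varepsilon\in P$, hence $|y|\leq 1/\varepsilon$) whereas you route it through monotonicity and the computation $B(0,r)^\circ=B(0,1/r)$; and you are slightly more thorough than the paper, which leaves implicit the check that the origin lies in the interior of $P^\circ$ and that $P^\circ$ is full-dimensional, whereas you handle these explicitly via $P\subseteq B(0,R)\Rightarrow B(0,1/R)\subseteq P^\circ$.
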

\begin{proof}
For $P\in\Pzd$, $P=[P]$, so $P=P^{\circ\circ}$ is immediate from Lemma \ref{polarproperties}(3).  From Lemma \ref{polarproperties}(2), $(\vertop(P))^\circ=[\vertop(P)]^\circ=P^\circ$.  We know $(\vertop(P))^\circ$ is a finite intersection of halfspaces, so $P^\circ$ is a polytope if it is bounded.  Since the origin is in the interior of $P$, $P$ contains an origin-centered ball of radius $\varepsilon>0$.  Then for all $y\in P^\circ$, $\frac{y}{|y|}\varepsilon\in P$.  Hence $\langle y,\frac{y}{|y|}\varepsilon\rangle\leq 1$.  Since $\langle y,\frac{y}{|y|}\varepsilon\rangle=|y|\varepsilon$, we have that $|y|\leq \frac{1}{\varepsilon}$, so $P^\circ$ is bounded.
\end{proof}

\begin{lemma}\label{duallemma}
For any polytope $P\in\Pzd$, $P^\circ$ and $P$ are dual polytopes.
\end{lemma}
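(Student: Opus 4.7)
The plan is to exhibit an explicit order-reversing bijection $\phi : \mathcal{F}(P) \to \mathcal{F}(P^\circ)$, which by the definitions in the excerpt is precisely what it means for $\mathcal{F}(P)$ and $\mathcal{F}(P^\circ)$ to be dual. For each face $F$ of $P$, I would set
\[ \phi(F) = F^\diamond := \bigl\{ y \in P^\circ : \langle x, y\rangle = 1 \text{ for all } x \in F\bigr\}, \]
with the conventions $\emptyset^\diamond = P^\circ$ and $P^\diamond = \emptyset$.  Since Lemma~\ref{polarproperties}(3) gives $P^{\circ\circ}=P$, the completely analogous construction on $\mathcal{F}(P^\circ)$ is well-defined and will serve as the candidate inverse of $\phi$.

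The first step is to verify that $F^\diamond$ really is a face of $P^\circ$ in the sense of Section~\ref{sec:defs}.  Writing $F = \conv(\vertop(F))$ and using linearity of the inner product gives
\[ F^\diamond = P^\circ \cap \bigcap_{v \in \vertop(F)}\bigl\{y : \langle v, y\rangle = 1\bigr\}. \]
Lemma~\ref{polarproperties}(6) applied to $P = [\vertop(P)]$ yields the halfspace description $P^\circ = \bigcap_{v \in \vertop(P)}\{y : \langle v, y\rangle \leq 1\}$, and these halfspaces are non-redundant: otherwise polarizing once more would, via Lemma~\ref{polarproperties}(3), express some vertex $v_0$ of $P$ as a convex combination of the origin and the remaining vertices, contradicting extremality.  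Hence $F^\diamond$ is the intersection of $P^\circ$ with the boundaries of some of its facet-defining halfspaces, matching the paper's definition of a face.  Inclusion-reversal $F \subseteq G \Rightarrow G^\diamond \subseteq F^\diamond$ is then immediate from the definition, and by symmetry the same reasoning applies on the $P^\circ$ side.

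The main obstacle, and the step that actually pins down bijectivity, is establishing $(F^\diamond)^\diamond = F$.  The inclusion $F \subseteq (F^\diamond)^\diamond$ is immediate from the definitions.  For the reverse, I plan to produce, for each $x \in P \setminus F$, a witness $y \in F^\diamond$ with $\langle x, y\rangle < 1$, which forces $x \notin (F^\diamond)^\diamond$.  Writing $F = P \cap \partial H_{i_1} \cap \cdots \cap \partial H_{i_j}$ in the notation of Section~\ref{sec:defs}, the facet-defining halfspaces $H_{i_k}$ correspond to vertices $w_{i_k} \in \vertop(P^\circ)$ under the polar, and the centroid $y = \tfrac{1}{j}(w_{i_1} + \cdots + w_{i_j})$ lies in $P^\circ$ by convexity, satisfies $\langle x', y\rangle = 1$ for every $x' \in F$ (so $y \in F^\diamond$), and has $\langle x, y\rangle < 1$ because $x \notin F$ forces strict inequality $\langle x, w_{i_k}\rangle < 1$ for at least one $k$.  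Combined with the inclusion-reversal above, this shows $\phi$ is an order-reversing bijection between $\mathcal{F}(P)$ and $\mathcal{F}(P^\circ)$, which is exactly the condition for $P$ and $P^\circ$ to be dual polytopes; the edge cases $F \in \{\emptyset, P\}$ are handled directly by the chosen conventions.
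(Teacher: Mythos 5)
The paper itself does not prove Lemma~\ref{duallemma}; it simply refers the reader to the standard references \cite{grunbaum} and \cite{ziegler}. Your argument is a correct, self-contained version of the standard proof one finds in those sources, and it is built out of the paper's own tools: the map $F\mapsto F^\diamond$ is the usual face-to-conjugate-face correspondence, well-definedness follows from the irredundant halfspace description $P^\circ=\bigcap_{v\in\vertop(P)}\{y:\langle v,y\rangle\leq 1\}$ (your non-redundancy argument via polaring is sound), order reversal is immediate, and the centroid trick for the strict inequality correctly gives $(F^\diamond)^\diamond=F$, which together with $P^{\circ\circ}=P$ yields the bijection. Two small points you might tighten if writing this up: you should specify that $\{H_{i_1},\dots,H_{i_j}\}$ is the set of \emph{all} facets containing $F$ (or at least some set realizing $F$ exactly, as the definition allows), and the only property of the $w_{i_k}$ actually used is $w_{i_k}\in P^\circ$, which follows directly from $P\subseteq H_{i_k}$ and does not require knowing in advance that they are vertices of $P^\circ$.
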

For the proof of Lemma \ref{duallemma}, we refer the reader to \cite{grunbaum} and \cite{ziegler}.

\section{Self-Polarity}\label{sec:selfpolarity}

Since polar polytopes are realizations of dual polytopes, and there are many polytopes that are self-dual, it is natural to wonder whether there are any polytopes that are self-polar.  It turns out that the answer to this question depends on what we mean by ``self-polar''.  If we mean $P=P^\circ$, then the answer is the following.

\begin{theorem}\label{unitballtheorem}
The only set $A\subseteq\Rd$ for which $A=A^\circ$ is the unit ball, $A=\{x\in\Rd:|x|\leq 1\}$.
\end{theorem}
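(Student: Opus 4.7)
The plan is to show two inclusions: first that any $A$ with $A=A^\circ$ must be contained in the unit ball $B=\{x\in\Rd:\abs{x}\leq 1\}$, and second that $B$ must be contained in $A$. Along the way we will also verify that $B$ itself really does satisfy $B=B^\circ$, so the unit ball is at least one solution.

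For the first inclusion, suppose $A=A^\circ$. Then for every $x\in A$ we have $x\in A^\circ$, so by the defining property of the polar, $\langle x,a\rangle\leq 1$ for all $a\in A$. Taking $a=x$ gives $\abs{x}^2=\langle x,x\rangle\leq 1$, hence $\abs{x}\leq 1$. Thus $A\subseteq B$.

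To verify $B=B^\circ$, note that by Cauchy--Schwarz, if $\abs{x}\leq 1$ and $\abs{y}\leq 1$ then $\langle x,y\rangle\leq\abs{x}\abs{y}\leq 1$, which shows $B\subseteq B^\circ$; for the reverse, any $y\in B^\circ$ must satisfy $\langle y,y/\abs{y}\rangle\leq 1$ (applying the polar condition to $x=y/\abs{y}\in B$), giving $\abs{y}\leq 1$.

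Now for the reverse inclusion $B\subseteq A$, apply Lemma \ref{polarproperties}(5) to the containment $A\subseteq B$ already established: it yields $B^\circ\subseteq A^\circ$. Since $B^\circ=B$ and $A^\circ=A$ by hypothesis, this becomes $B\subseteq A$. Combining the two inclusions gives $A=B$.

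I do not expect any serious obstacle here: the argument rests on the pairing trick $\langle x,x\rangle\leq 1$ combined with the order-reversing property of the polar. The only subtlety worth flagging is that the statement is about arbitrary subsets of $\Rd$, not just convex bodies or polytopes, so one should avoid citing polytope-specific facts; fortunately the two-inclusion argument above works for any set $A$ at all.
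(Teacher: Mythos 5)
Your proof is correct and follows essentially the same route as the paper: use the pairing $\langle x,x\rangle\leq 1$ to get $A\subseteq B$, note $B=B^\circ$, and apply the order-reversal property of the polar (Lemma \ref{polarproperties}(5)) to obtain the reverse inclusion. The only difference is that you spell out the verification of $B=B^\circ$ via Cauchy--Schwarz, which the paper leaves as immediate from the definition.
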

\begin{proof}
Let $B$ denote the unit ball; that is, $B=\{x\in\Rd:|x|\leq 1\}$.  It should be clear that $B=B^\circ$ from the definition of the polar operation.  Now suppose we have some other $A\subseteq\Rd$ such that $A=A^\circ$.  For all $x\in A$, we also have $x\in A^\circ$, so we must have $\langle x,x\rangle\leq 1\implies|x|\leq 1$.  Hence $A\subseteq B=B^\circ$.  Then using Lemma \ref{polarproperties}(5) on $A\subseteq B^\circ$, we get $B\subseteq A^\circ=A$.  Hence $A=B$.
\end{proof}

In light of this, we must use a slightly more relaxed definition of self-polarity in order to find anything interesting to work with.

We define a set $A\subset\Rd$ as \emph{self-polar} provided there exists some orthogonal transformation $U$ of $\Rd$ such that $A=UA^\circ$.  By orthogonal transformation, we mean any nonsingular linear transformation $U$ such that $U^{T}=U^{-1}$.  In general, these are rotations and reflections.  In the case that $A=-A^\circ$, we say that $A$ is \emph{negatively self-polar}.

We will refer to such an orthogonal transformation as a \emph{self-polarity map} of the set $A$.  In the case of a polytope $P$, note that every self-polarity map induces a dual automorphism of the face lattice via the mapping from each face of $P$ to its dual face in the polar polytope $P^\circ$, to the image of that dual face in $UP^\circ$ after the orthogonal transformation, which is again some face of $P$.  Hence a given polytope can have at most as many distinct self-polarity maps as its face lattice has distinct dual automorphisms.

We will soon begin constructing many examples of self-polar polytopes, so it is worthwhile to have a simple way of verifying the property of self-polarity.  The next theorem fulfills that goal.

\begin{theorem}\label{selfpolarconditions}
For a polytope $P\in\Pzd$ and an orthogonal transformation $U$, $P=UP^\circ$ if and only if both of the following are true.
\begin{enumerate}
\item For all vertices $v,w$ of $P$, $\langle Uv,w\rangle\leq 1$.
\item For each facet $F$ of $P$, there is a vertex $v$ of $P$ such that $\langle Uv,w\rangle=1$ for all vertices $w\in\vertop(F)$.
\end{enumerate}
\end{theorem}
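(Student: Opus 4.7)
The plan is to unwrap $UP^\circ$ as an explicit H-description and then analyze the two inclusions $P\subseteq UP^\circ$ and $UP^\circ\subseteq P$ separately. Using Lemma~\ref{polarofmatrixlemma} and the fact that the polar of any set equals the polar of its convex hull (Lemma~\ref{polarproperties}(2)), we have
\[
UP^\circ \;=\; U\bigl\{x\in\Rd : \langle x,v\rangle\leq 1\ \forall\, v\in\vertop(P)\bigr\}
\;=\;\bigl\{y\in\Rd : \langle y,Uv\rangle\leq 1\ \forall\, v\in\vertop(P)\bigr\},
\]
where the last equality uses that $\langle U^{-1}y,v\rangle=\langle y,Uv\rangle$ for orthogonal $U$. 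Thus $UP^\circ$ is the intersection of the halfspaces $H_v=\{y:\langle y,Uv\rangle\leq 1\}$ as $v$ ranges over $\vertop(P)$.

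For the inclusion $P\subseteq UP^\circ$, I would observe that since each $H_v$ is defined by a linear inequality and $P$ is the convex hull of its vertices, $P\subseteq UP^\circ$ holds if and only if every vertex $w$ of $P$ lies in every $H_v$. This is literally condition (1). So $P\subseteq UP^\circ$ is equivalent to condition (1), independently of anything else.

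For the inclusion $UP^\circ\subseteq P$, the plan is to compare the H-description of $UP^\circ$ to the canonical H-description of $P$ by its facet-defining halfspaces. Because the origin lies in the interior of $P$, each facet $F$ of $P$ is cut out by a unique halfspace of the form $\{y:\langle y,a_F\rangle\leq 1\}$, and $P$ is the intersection of these. For the ``if'' direction, assuming condition (1) (so the containment above holds) together with condition (2), the vertex $v$ supplied by (2) for a facet $F$ yields $\langle w,Uv\rangle=1$ for all $w\in\vertop(F)$, which together with (1) means $H_v$ is exactly the facet-defining halfspace of $F$. Therefore every facet halfspace of $P$ appears among the $H_v$'s, so $UP^\circ=\bigcap_v H_v\subseteq\bigcap_{F\text{ facet}}H_F=P$. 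For the ``only if'' direction, assuming $P=UP^\circ$, each facet $F$ of $P$ is a facet of $UP^\circ$, hence is cut out by some non-redundant $H_v$ from the description above, and this $v$ witnesses condition (2); condition (1) is immediate from $P\subseteq UP^\circ$.

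The step I expect to take a bit of care is the ``only if'' direction of condition (2): one must argue that each facet of the H-polytope $UP^\circ$ is actually supported by some $H_v$ from the generating list (rather than only by an intersection of several), which uses the standard fact that in a minimal H-description of a full-dimensional polytope, each facet is supported by exactly one defining halfspace. The rest is routine bookkeeping about vertex/halfspace descriptions.
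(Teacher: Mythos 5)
Your proof is correct, and it follows the same two-inclusion scaffolding as the paper — condition (1) characterizes $P\subseteq UP^\circ$, condition (2) yields $UP^\circ\subseteq P$ — but it fills in details the paper leaves implicit, and on the key step it takes the dual route. For $UP^\circ\subseteq P$, the paper reasons from the vertex side of $UP^\circ$ (each vertex of $UP^\circ$ is dual to a facet $F$ of $P$, and the paper asserts condition (2) places these vertices in $P$), while you reason from the halfspace side: condition (2), together with the uniqueness of the supporting hyperplane of a facet of a polytope with $0$ in its interior, forces $Uv$ to be the facet normal $a_F$, so $H_v$ is precisely the facet-defining halfspace $H_F$, and $UP^\circ=\bigcap_v H_v\subseteq\bigcap_F H_F=P$ is immediate. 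Your version is cleaner and avoids some bookkeeping (the vertex of $UP^\circ$ dual to $F$ is $Ua_F$, whereas condition (2) directly produces $U^{-1}a_F\in\vertop(P)$; reconciling these requires an extra polar step that the paper does not write out). You also prove the converse explicitly — each facet of $P=UP^\circ$ is supported by some non-redundant defining halfspace $H_v$, and that $v$ witnesses (2) — which the paper states only in the forward direction. One tiny remark: in your ``if'' direction, condition (1) is not actually needed to conclude $H_v=H_F$; the identification $Uv=a_F$ follows from (2) alone together with the fact that the affine hull of a facet of a polytope in $\Pzd$ is a hyperplane missing the origin and hence has a unique normalization.
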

\begin{proof}
The first condition guarantees that $P\subseteq UP^\circ$.  As for the second condition, each vertex $v$ of $UP^\circ$ is generated by a facet $F$ of $P$, so that $\langle v,Uw\rangle=1$ for all vertices $w\in\vertop(F)$.  Thus the second condition guarantees that we must have $\vertop(UP^\circ)\subseteq P$.  Hence $P=UP^\circ$.
\end{proof}

Before proceeding to show examples of self-polar polytopes and explore their properties, it is interesting to note that we can say something about the properties of the self-polarity maps of polytopes even without knowing anything about the polytopes themselves.  First, recall that a matrix $M$ is \emph{periodic} if there is some $n\in\mathbb{N}$ such that $M^n$ is the identity.  The \emph{period of a matrix} $M$ is the minimum $n\in\mathbb{N}$ such that $M^n$ is the identity.

\begin{theorem}\label{Uconditionstheorem}
If $P=UP^\circ$ for some polytope $P\in\Pzd$ and some orthogonal map $U$, then $U^2P=P$ and $U$ is periodic with an even period.
\end{theorem}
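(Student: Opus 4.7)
The plan is to first derive the identity $P = U^2 P$ by applying the polar operation to the self-polarity relation, then use finiteness of the symmetry group of $P$ to conclude periodicity, and finally rule out odd periods by comparing $P$ with $P^\circ$.

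For the first claim, I would apply the polar operation to both sides of $P = UP^\circ$ and use Lemma \ref{polarofmatrixlemma} together with the orthogonality of $U$ to get $P^\circ = (UP^\circ)^\circ = UP^{\circ\circ}$. Since $P \in \Pzd$, the involution property gives $P^{\circ\circ} = P$, so $P^\circ = UP$. Substituting this back into the original relation yields $P = UP^\circ = U(UP) = U^2 P$, which is the first conclusion.

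For periodicity, I would argue that the group of orthogonal transformations stabilizing $P$ is finite. Since $P$ is full-dimensional with the origin in its interior, the vertex set $\vertop(P)$ is a finite set which affinely spans $\Rd$, and any orthogonal transformation preserving $P$ must permute its vertices. Because such a transformation is linear and $\vertop(P)$ contains a basis of $\Rd$, it is uniquely determined by this permutation, so the stabilizer embeds into the finite symmetric group on $\vertop(P)$. Applying this to $U^2$ (which stabilizes $P$ by the previous step), we conclude that $U^2$ has finite order, hence $U$ itself is periodic.

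The main subtlety is showing the period is even. Here I would use Theorem \ref{unitballtheorem}: since $P$ is a polytope rather than the unit ball, $P \neq P^\circ$. Combined with $P^\circ = UP$ and $P = UP^\circ$, this says that $U$ swaps the two distinct sets $P$ and $P^\circ$. Suppose for contradiction that $U$ has odd period $n$, so $U^n = I$. Then
\[
P = U^n P = U^{n-1}(UP) = U^{n-1} P^\circ = (U^2)^{(n-1)/2} P^\circ = P^\circ,
\]
where the final equality uses that $U^2$ stabilizes $P^\circ$ (apply $U^2 P = P$ on the polar side, or observe directly that $U^2 P^\circ = U(UP^\circ) = UP = P^\circ$). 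This contradicts $P \neq P^\circ$, so the period must be even. The delicate step is ensuring we have the symmetry $U^2 P^\circ = P^\circ$ available; everything else is a short manipulation.
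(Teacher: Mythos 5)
Your proposal is correct and follows essentially the same route as the paper's proof: apply the polar to $P=UP^\circ$ to get $P^\circ=UP$ and hence $U^2P=P$, deduce periodicity from the finiteness of vertex permutations together with the fact that the vertices span $\Rd$, and then rule out odd period by showing it would force $P=P^\circ$, contradicting Theorem~\ref{unitballtheorem}. The only differences are cosmetic packaging (you phrase periodicity as an embedding of the stabilizer into a finite symmetric group, and you run the odd-period contradiction as a single chain of equalities ending in $P=P^\circ$), not a different argument.
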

\begin{proof}
Assume the conditions of the theorem statement and then apply the polar operation to both sides of $P=UP^\circ$ to obtain $P^\circ=(UP^\circ)^\circ$.  Using Lemmas \ref{polarofmatrixlemma} and \ref{polarproperties}(3), this becomes $P^\circ=UP^{\circ\circ}=UP$.  Then, substituting $UP$ for $P^\circ$ in $P=UP^\circ$, we get $P=U^2P$.  Hence $U^2$ is a symmetry of $P$.

Any symmetry of a polytope must have a finite period.  This is so because a symmetry maps vertices to vertices.  Each $U^2,U^4,U^6,\dots$ induces a permutation on the vertices of $P$, and there are a finite number of such permutations.  Hence there exist some $i,j\in\mathbb{N}$ with $i<j$ such that $U^{2i}$ and $U^{2j}$ induce the same permutation of the vertices of $P$.  But since the vertices of $P$, regarded as vectors, span the space $\Rd$, we must have $U^{2i}=U^{2j}$, which implies $I=U^{2(j-i)}$.  Thus $U$ is periodic.

Now suppose that $U$ has an odd period, so that there is some $k\in\mathbb{N}$ such that $U^{2k-1}=I$.  Then $U^{2k-1}P=P$.  But we also know that $U^{2k}P=P$, so together these imply $U^{2k-1}P=U^{2k}P$, which implies $P=UP$.  Then substituting into $P^\circ=UP$ we get $P^\circ=P$, which by Theorem \ref{unitballtheorem} implies that $P$ is a unit ball, contradicting our assumption that $P$ is a polytope.  Hence $U$ must have an even period.
\end{proof}

\section{In Low Dimensions}\label{sec:lowdims}
\subsection{Two Dimensions}

In two dimensions, it is well-known that every polygon is self-dual.  If we start with a regular polygon, centered at the origin, we obtain as its polar a dilated and rotated copy of the same polygon.  Let us refer to the distance between origin and vertex in such a polygon as its \textit{radius}.  When we increase the radius of such a polygon, the radius of its polar decreases, and vice versa.  See Figure \ref{squaredilations} for examples.  Hence there is always some radius for which the polar radius is the same as the original radius.  

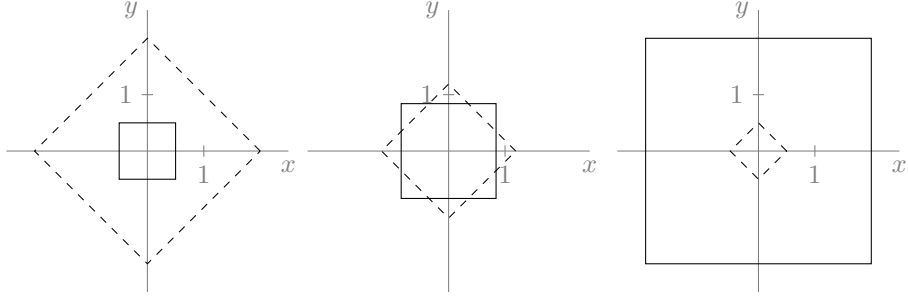
\begin{figure}

\begin{minipage}[c]{0.33\textwidth}
\centering
\begin{tikzpicture}[>=angle 60,x=0.75cm,y=0.75cm]
\draw[gray] (-2.5,0) -- (2.5,0) node[below] {$x$};
\draw[gray] (0,-2.5) -- (0,2.5) node[left] {$y$};
\draw[gray] (1,0.1) -- (1,-0.1) node[below] {1};
\draw[gray] (0.1,1) -- (-0.1,1) node[left] {1};
\draw (-0.5,-0.5) -- (-0.5,0.5) -- (0.5,0.5) -- (0.5,-0.5) -- cycle;
\draw[dashed] (2,0) -- (0,2) -- (-2,0) -- (0,-2) -- cycle;
\end{tikzpicture}
\end{minipage}
\begin{minipage}[c]{0.33\textwidth}
\centering
\begin{tikzpicture}[>=angle 60,x=0.75cm,y=0.75cm]
\draw[gray] (-2.5,0) -- (2.5,0) node[below] {$x$};
\draw[gray] (0,-2.5) -- (0,2.5) node[left] {$y$};
\draw[gray] (1,0.1) -- (1,-0.1) node[below] {1};
\draw[gray] (0.1,1) -- (-0.1,1) node[left] {1};
\draw (-0.8409,-0.8409) -- (-0.8409,0.8409) -- (0.8409,0.8409) -- (0.8409,-0.8409) -- cycle;
\draw[dashed] (1.1892,0) -- (0,1.1892) -- (-1.1892,0) -- (0,-1.1892) -- cycle;
\end{tikzpicture}
\end{minipage}
\begin{minipage}[c]{0.33\textwidth}
\centering
\begin{tikzpicture}[>=angle 60,x=0.75cm,y=0.75cm]
\draw[gray] (-2.5,0) -- (2.5,0) node[below] {$x$};
\draw[gray] (0,-2.5) -- (0,2.5) node[left] {$y$};
\draw[gray] (1,0.1) -- (1,-0.1) node[below] {1};
\draw[gray] (0.1,1) -- (-0.1,1) node[left] {1};
\draw (-2,-2) -- (-2,2) -- (2,2) -- (2,-2) -- cycle;
\draw[dashed] (0.5,0) -- (0,0.5) -- (-0.5,0) -- (0,-0.5) -- cycle;
\end{tikzpicture}
\end{minipage}
\caption{Examples of a regular, origin-centered square (solid) and its polar set (dashed) with three different radii.}
\label{squaredilations}\end{figure}

When their radii are the same, a regular, origin-centered polygon and its polar can be made identical by a suitable rotation.  We can also make the two polygons identical by a reflection, if we let the axis of symmetry connect the origin and any one of the points of intersection between the boundary of the polygon and the boundary of its polar.  See Figure \ref{squarepolarities} for an example.

\begin{figure}
\begin{minipage}[c]{0.5\textwidth}
\centering
\begin{tikzpicture}[>=angle 60,x=1.5cm,y=1.5cm]
\draw[gray] (-1.5,0) -- (1.5,0) node[below] {$x$};
\draw[gray] (0,-1.5) -- (0,1.5) node[left] {$y$};
\draw[gray] (1,0.1) -- (1,-0.1) node[below right] {1};
\draw[gray] (0.1,1) -- (-0.1,1) node[above left] {1};
\draw (-0.8409,-0.8409) -- (-0.8409,0.8409) -- (0.8409,0.8409) -- (0.8409,-0.8409) -- cycle;
\draw[dashed] (1.1892,0) -- (0,1.1892) -- (-1.1892,0) -- (0,-1.1892) -- cycle;
\draw[<->,red] (0.8,0.9) arc (50:90:1.1);
\end{tikzpicture}
\end{minipage}
\begin{minipage}[c]{0.5\textwidth}
\centering
\begin{tikzpicture}[>=angle 60,x=1.5cm,y=1.5cm]
\draw[gray] (-1.5,0) -- (1.5,0) node[below] {$x$};
\draw[gray] (0,-1.5) -- (0,1.5) node[left] {$y$};
\draw[gray] (1,0.1) -- (1,-0.1) node[below right] {1};
\draw[gray] (0.1,1) -- (-0.1,1) node[above left] {1};
\draw (-0.8409,-0.8409) -- (-0.8409,0.8409) -- (0.8409,0.8409) -- (0.8409,-0.8409) -- cycle;
\draw[dashed] (1.1892,0) -- (0,1.1892) -- (-1.1892,0) -- (0,-1.1892) -- cycle;
\draw[red] (0.5858,1.4142) -- (-0.5858,-1.4142);
\draw[<->,red] (0.5-0.2,1.2071+0.04142*2) -- (0.5+0.2,1.2071-0.04142*2);
\end{tikzpicture}
\end{minipage}
\caption{Examples of a self-polarity by a rotation of $45^\circ$ (left panel) and by reflection across the red line (right panel).}
\label{squarepolarities}\end{figure}
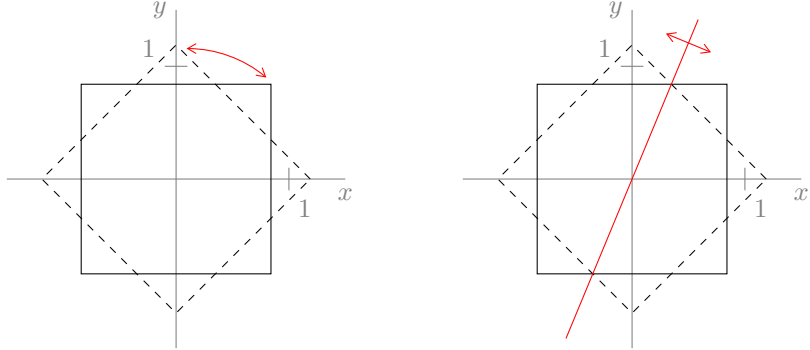

Since the dihedral group for an $n$-gon has $2n$ elements, there are evidently only $2n$ possible dual automorphisms of the $n$-gon's face lattice.  We will show that any $n$-gon can be realized in such a way that it is self-polar by $2n$ distinct orthogonal transformations, each of which realizes a different one of the possible dual automorphisms.

\begin{theorem} \label{polygonthm}
For $n\in\mathbb{N}$, an $n$-sided polygon can be realized as self-polar by $2n$ distinct orthogonal transformations, $n$ of which are reflections and the other $n$ of which are rotations.
\end{theorem}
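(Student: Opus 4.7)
The plan is to produce an explicit realization — the regular $n$-gon of a carefully chosen radius — and then exhibit $2n$ orthogonal maps of $\mathbb{R}^2$ which all carry its polar onto itself, showing they are pairwise distinct and splitting evenly into rotations and reflections.

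First I would set up coordinates. Let $P_r$ be the regular $n$-gon centered at the origin whose vertices lie on the circle of radius $r$, say at angles $2\pi k/n$ for $k=0,\dots,n-1$. A direct computation of the apothem shows that the supporting line through the midpoint of each edge is at distance $r\cos(\pi/n)$ from the origin; hence by Lemma \ref{polarproperties} and the definition of the polar, $P_r^\circ$ is again a regular $n$-gon, rotated by $\pi/n$ relative to $P_r$, whose vertices lie on the circle of radius $1/(r\cos(\pi/n))$. Choosing $r$ so that these two circumradii agree gives the self-polar radius $r_\ast=1/\sqrt{\cos(\pi/n)}$, and for $P=P_{r_\ast}$ we obtain $P^\circ = R_{\pi/n}\, P$, where $R_\theta$ denotes the rotation of $\mathbb{R}^2$ by $\theta$.

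Next I would translate the self-polarity equation into a statement about the dihedral group. An orthogonal transformation $U$ satisfies $P = U P^\circ$ if and only if $U R_{\pi/n}$ is a symmetry of $P$, i.e.\ an element of the dihedral group $D_n$ of $P$. Since left-multiplication by $R_{-\pi/n}$ is a bijection on $O(2)$, the set of self-polarity maps is exactly $D_n \cdot R_{-\pi/n}$, a set of $2n$ distinct orthogonal maps. Within this set, the elements coming from the $n$ rotations of $D_n$ have the form $R_{(2k-1)\pi/n}$, $k=0,\dots,n-1$, which are $n$ distinct rotations; the elements coming from the $n$ reflections of $D_n$ are products of a reflection with a rotation, hence again reflections, and there are $n$ of them. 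To verify that each such $U$ really is a self-polarity map (and not just a candidate) I would invoke Theorem \ref{selfpolarconditions}: the two conditions there are immediate from the fact that $U R_{\pi/n}\in D_n$ permutes vertices of $P$ and preserves the incidence pairing $\langle Uv,w\rangle$ between vertices and facets inherited from the polar duality $P^\circ = R_{\pi/n} P$.

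The steps themselves are elementary, so the only real obstacle is bookkeeping: ensuring that all $2n$ constructed maps are genuinely distinct and correctly split into $n$ rotations and $n$ reflections. This is handled once and for all by the observation that the coset $D_n \cdot R_{-\pi/n}$ in $O(2)$ has cardinality $|D_n|=2n$, and that multiplication by the rotation $R_{-\pi/n}$ preserves the rotation/reflection dichotomy within $D_n$ (rotations times a rotation are rotations; reflections times a rotation are reflections). Together with the parity argument of Theorem \ref{Uconditionstheorem} — which predicts that every such $U$ has even period, consistent with our list — this completes the proof.
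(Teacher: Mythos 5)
Your proposal is correct and follows essentially the same route as the paper: pick the regular $n$-gon with circumradius $\sqrt{\sec(\pi/n)}$, show that rotation by $\pi/n$ is a self-polarity map, and then generate the other $2n-1$ by composing with the dihedral symmetry group of the polygon. The only (harmless) stylistic difference is that you establish $P^\circ = R_{\pi/n}P$ geometrically via apothem and circumradius and then package the count as a coset $D_n\cdot R_{-\pi/n}$, whereas the paper verifies $P=UP^\circ$ directly through the dot-product conditions of Theorem \ref{selfpolarconditions} and then lists the composed maps $U^{2j+1}$ and $U^{2j+1}R$ explicitly; these are the same argument in different clothing.
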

\begin{proof}
Consider a regular polygon $P$ with $n$ sides, centered at the origin, and given by the vertices
$\{v_1,v_2,\dots,v_n\}$ where
\[v_i:=\left(\sqrt{\sec\left(\frac{\pi}{n}\right)} \cos\left(\frac{2\pi}{n}i\right), \sqrt{\sec\left(\frac{\pi}{n}\right)} \sin\left(\frac{2\pi}{n}i\right)\right)\]

Let $U$ be a counterclockwise rotation of $\pi/n$.  Note that $U$ is an orthogonal transformation.  Then each $Uv_i$ is given by
\[Uv_i:=\left(\sqrt{\sec\left(\frac{\pi}{n}\right)} \cos\left(\frac{2\pi}{n}i+\frac{\pi}{n}\right), \sqrt{\sec\left(\frac{\pi}{n}\right)} \sin\left(\frac{2\pi}{n}i+\frac{\pi}{n}\right)\right)\]

Consider that the maximum dot product between any $v_i$ and $Uv_j$ must be between vertices that are as close together as possible, hence pairs of the form $\langle v_i,Uv_i\rangle$ or $\langle v_i,Uv_{i-1}\rangle$.  Using trigonometric identities, we can find these dot products as
\begin{align*}
\langle v_i,Uv_i\rangle &= \sec\left(\frac{\pi}{n}\right)\left(\cos\left(\frac{2\pi}{n}i\right)\cos\left(\frac{2\pi}{n}i+\frac{\pi}{n}\right)+\sin\left(\frac{2\pi}{n}i\right)\sin\left(\frac{2\pi}{n}i+\frac{\pi}{n}\right)\right)\\
&= \sec\left(\frac{\pi}{n}\right)\cos\left(\frac{\pi}{n}\right)\\
&= 1
\end{align*}
and
\begin{align*}
\langle v_i,Uv_{i-1}\rangle &= \sec\left(\frac{\pi}{n}\right)\left(\cos\left(\frac{2\pi}{n}i\right)\cos\left(\frac{2\pi}{n}i-\frac{\pi}{n}\right)+\sin\left(\frac{2\pi}{n}i\right)\sin\left(\frac{2\pi}{n}i-\frac{\pi}{n}\right)\right)\\
&= \sec\left(\frac{\pi}{n}\right)\cos\left(\frac{\pi}{n}\right)\\
&= 1
\end{align*}

Hence the first condition of Theorem \ref{selfpolarconditions} is fulfilled.  As for the second condition, the facets of $P$ are of the form $\conv(v_{i-1},v_i)$.  Both these vertices have a dot product of 1 with $Uv_{i-1}$.  Hence $P=UP^\circ$.

Now consider that $U^2$ is a symmetry of $P$, because $P$ is regular with $n$ sides and $U$ is a rotation by $\pi/n$.  This means $U^{2j}P=P$ for all $j\in\mathbb{N}$, and so $P=UP^\circ\implies P=U(U^{2j}P)^\circ=U^{2j+1}P^\circ$.  Only $U,U^3,U^5,\dots,U^{2n-1}$ are distinct, however, because $U^{2n}=I$.  These are the $n$ distinct rotations mentioned in the theorem statement.

Let $R$ be reflection across the $x$-axis.  Then there are $n$ other symmetries of $P$, given by $R,U^2R,U^4R,U^6R,\dots,U^{2n-2}R$, and, just as before, $P=UP^\circ\implies P=U(U^{2j}RP)^\circ=U^{2j+1}RP^\circ$.  These are the $n$ distinct reflections mentioned in the theorem statement.
\end{proof}

Negatively self-polar polytopes are of particular interest due to their application to sphere-coloring.  From the last theorem, we can already say that odd polygons can be realized as negatively self-polar.

\begin{corollary}
Polygons with an odd number of sides can be realized as negatively self-polar.
\end{corollary}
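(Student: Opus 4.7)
The plan is to apply Theorem \ref{polygonthm} directly: every regular $n$-gon (centered at the origin with the specific radius used in that theorem) is self-polar under each of the rotations $U, U^3, U^5, \dots, U^{2n-1}$, where $U$ denotes counterclockwise rotation by $\pi/n$. To establish negative self-polarity, I just need to find, among these $n$ odd powers of $U$, one that equals $-I$.

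In two dimensions, $-I$ is exactly the rotation by $\pi$, so $U^k = -I$ if and only if $k\pi/n \equiv \pi \pmod{2\pi}$, i.e., $k \equiv n \pmod{2n}$. Hence I need $n$ itself to appear in the list $\{1,3,5,\dots,2n-1\}$ of odd exponents furnished by Theorem \ref{polygonthm}. This happens precisely when $n$ is odd. So my plan is: take a regular $n$-gon $P$ realized as in Theorem \ref{polygonthm}, invoke the identity $P = U^{n} P^\circ$ from that theorem (valid because $n$ is an odd integer in $\{1,3,\dots,2n-1\}$), and observe that $U^n$ is rotation by $\pi$ and therefore equals $-I$. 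This immediately gives $P = -P^\circ$.

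There is essentially no obstacle: the work of verifying self-polarity with the right orthogonal transformations has already been done in Theorem \ref{polygonthm}, and the only new observation is the elementary index calculation that $n \in \{1,3,\dots,2n-1\}$ when $n$ is odd, together with the fact that rotation by $\pi$ coincides with $-I$ in $\mathbb{R}^2$. The corollary is thus a one-line consequence of the preceding theorem.
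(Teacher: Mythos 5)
Your proposal is correct and matches the paper's own proof essentially verbatim: both invoke Theorem \ref{polygonthm} to get the odd powers $U, U^3, \dots, U^{2n-1}$ as self-polarity maps, observe that for odd $n$ the exponent $n$ appears in that list, and note that $U^n$ is rotation by $\pi$, which in $\mathbb{R}^2$ equals $-I$. No substantive differences.
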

\begin{proof}
For $\mathbb{R}^2$, the negative transformation $-I$ is equivalent to a rotation of $\pi$.  If $P$ is as described in the proof of Theorem \ref{polygonthm} and has $n$ sides, and $U$ is a rotation by $\pi/n$, then $U,U^3,U^5,\dots,U^{2n-1}$ are self-polarity maps of $P$.  If $n$ is odd, then $U^n$ must also be in this list.  Since $U^n$ is a rotation by $\pi$, we conclude that $P=-P^\circ$.
\end{proof}

Now let us consider even polygons.  First of all, if our goal is to construct a negatively self-polar even polygon, it ought to be clear that a regular even polygon, centered at the origin, is a hopeless case.  This is so because for any such realization, call it $P$, we have $P=-P$ and so $P^\circ=-P^\circ$, and so $P=-P^\circ$ would imply $P=P^\circ$, which is impossible except in the case of the unit ball.

It is not necessary to use a regular polygon to achieve self-polarity, however.  This opens the door to tremendous number of possible realizations, suggesting perhaps a linear algebra-based approach.  However, it is not necessary to turn to such techniques, because the reason for the impossibility of negatively self-polar even polygons is combinatorial, as we shall show.

Recall that any self-polarity map gives rise to a dual automorphism on the face lattice of the polytope.  For reflections, such a map is an involution.  We will show that for an even polygon, it is impossible to produce an involutory dual automorphism without mapping some facet to one of the vertices that it contains.

This is significant because when this is the case, it is impossible to realize the dual automorphism as the negative self-polarity map.  This impossibility is true of any polytope, not only the two-dimensional sort, so we will pause to state it as a lemma before applying it to the case of even polygons.

\begin{lemma} \label{dualautolemma}
A dual automorphism $\phi$ on the face lattice of a polytope cannot be realized as the negative self-polarity map when there exists a vertex $v$ for which $\phi(v)$ contains $v$.
\end{lemma}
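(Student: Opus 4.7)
The plan is to reduce the claim to a direct computation of $\phi(v)$ for a vertex $v$, and then to read the hypothesis "$\phi(v)$ contains $v$" as an inner-product equation that is immediately false.

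First I would suppose, toward a contradiction, that $\phi$ is realized by a negative self-polarity, so that there exists $P\in\Pzd$ with $P=-P^\circ$ and $\phi$ is the resulting order-reversing bijection on $\mathcal{F}(P)$. For a vertex $v\in\vertop(P)$, the standard polar correspondence sends $v$ to the facet
\[
F_v \;=\; \{x\in P^\circ : \langle x,v\rangle = 1\}
\]
of $P^\circ$. Applying the orthogonal transformation $-I$ (using Lemma \ref{polarofmatrixlemma} to justify that it carries the polar structure of $P^\circ$ onto $P$) transports $F_v$ to the facet
\[
\phi(v) \;=\; -F_v \;=\; \{y\in P : \langle v,y\rangle = -1\}
\]
of $P = -P^\circ$.

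Then the hypothesis that $\phi(v)$ contains $v$ reads, by the defining equation of this facet, $\langle v,v\rangle = -1$. This is absurd, since $\langle v,v\rangle = |v|^2 \geq 0$, giving the contradiction.

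The only point that requires a little care is the first step: identifying the face-lattice bijection induced by a negative self-polarity as the composition of the standard polar duality $P\leftrightarrow P^\circ$ with the action of $-I$. This follows from the fact that orthogonal transformations carry faces to faces of the same dimension and preserve the defining linear functionals up to the corresponding substitution, together with Lemma \ref{polarofmatrixlemma} applied to $M=-I$. Once the formula $\phi(v)=\{y\in P:\langle v,y\rangle=-1\}$ is in hand, the rest is a one-line numerical impossibility, so I expect no serious obstacle beyond writing this identification cleanly.
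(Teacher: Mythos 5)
Your proof is correct and follows essentially the same route as the paper: identify $\phi(v)$ as the facet of $P=-P^\circ$ cut out by the hyperplane $\langle v,\cdot\rangle=-1$, then observe that $v\in\phi(v)$ would force $|v|^2=-1$. The extra paragraph justifying the identification of $\phi$ is a reasonable elaboration but does not change the argument.
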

\begin{proof}
In a negatively self-polar polytope, each vertex $v$ is paired with a facet $f$ which lies in the boundary of the half-space $\{x\in\Rd:\langle -v,x\rangle\leq 1\}$.  If $v$ is in fact an element of this facet, then $\langle -v,v\rangle=1$.  But $\langle -v,v\rangle=1$ implies $|v|^2=-1$, which is impossible.  So $v$ cannot be contained in $f$.  Hence any dual automorphism $\phi$ for which $\phi(v)$ contains $v$ cannot be realized by a negative self-polarity map.
\end{proof}

\begin{theorem}\label{evenpolygonthm}
No even polygon can be realized as negatively self-polar.
\end{theorem}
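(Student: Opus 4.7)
The plan is to apply Lemma \ref{dualautolemma}. First I would show that a realization $P = -P^\circ$ induces an \emph{involutory} dual automorphism $\phi$ on $\mathcal{F}(P)$: tracing the construction of $\phi$ from the self-polarity map $U = -I$, each vertex $v$ of $P$ maps to the facet $\{y \in P : \langle -v, y\rangle = 1\}$, which in turn maps back to the vertex $-(-v) = v$, so $\phi^2 = \mathrm{id}$. It then suffices to prove that every involutory dual automorphism of the face lattice of an $n$-gon with $n$ even sends some vertex $v$ into a facet containing $v$; Lemma \ref{dualautolemma} would then rule out its realization by $-I$.

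Next I would model $\phi$ combinatorially. The face lattice of an $n$-gon, restricted to its rank $1$ and rank $2$ levels, has incidence bipartite graph equal to a $2n$-cycle $v_1 - e_1 - v_2 - e_2 - \cdots - v_n - e_n - v_1$. Any dual automorphism acts as an automorphism of this $2n$-cycle that swaps the two color classes (polygon-vertices and polygon-edges). Indexing positions by $\mathbb{Z}/2n\mathbb{Z}$ with polygon-vertices at even positions and polygon-edges at odd positions, the color-swapping cycle-automorphisms are exactly the rotations $x \mapsto x + c$ with $c$ odd and the reflections $x \mapsto -x + c$ with $c$ odd.

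For the involutory ones, a rotation $x \mapsto x + c$ is an involution iff $2c \equiv 0 \pmod{2n}$, i.e., $c \equiv 0$ or $c \equiv n$, which is odd only when $n$ is odd. So for $n$ even, every involutory color-swapping dual automorphism is a reflection $\phi(x) = -x + c$ with $c$ odd. To finish, I want an even position $x$ (a polygon-vertex) with $\phi(x) - x \equiv \pm 1 \pmod{2n}$, so that $\phi(v)$ is a facet containing $v$. The equations $-2x + c \equiv \pm 1 \pmod{2n}$ give candidates $x \equiv (c \mp 1)/2 \pmod{n}$; since $(c-1)/2$ and $(c+1)/2$ differ by $1$ they have opposite parities, so exactly one of them is even, and because $n$ is even the second representative mod $2n$ retains this parity. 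This locates the required incidence, and by Lemma \ref{dualautolemma} contradicts the existence of a negatively self-polar realization.

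The main obstacle is the bookkeeping between the geometric face lattice and the abstract $2n$-cycle, in particular verifying that the induced $\phi$ really is an involution rather than of higher order. Once the $2n$-cycle model is in place and rotations are eliminated, the parity argument that kills every color-swapping reflection for $n$ even is routine.
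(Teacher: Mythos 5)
Your argument is correct and establishes the theorem, but it proceeds along a genuinely different route from the paper. The paper's proof fixes an arbitrary involutory pairing of vertex $0$ with facet $(i,i+1)$, splits into two cases according to whether vertex $i$ is paired with facet $(0,1)$ or $(n-1,0)$, and then chases the forced chain of pairings until it either produces a vertex paired with an incident facet or derives $n=2i+1$, contradicting $n$ even. You instead encode the rank-$\{1,2\}$ part of the face lattice as the incidence $2n$-cycle, recognize that a dual automorphism is precisely a color-swapping automorphism of $C_{2n}$, and classify those inside the dihedral group: color-swapping rotations $x\mapsto x+c$ need $c$ odd, and such a rotation is an involution only when $c\equiv n\pmod{2n}$, forcing $n$ odd; hence for $n$ even every involutory dual automorphism is a reflection $x\mapsto -x+c$ with $c$ odd, and the parity analysis of $2x\equiv c\mp1\pmod{2n}$ then produces an even fixed-distance-one position, i.e.\ a vertex sent to one of its two incident edges, so Lemma \ref{dualautolemma} applies. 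Your approach is structurally cleaner: it replaces the paper's hand-tracked pairing chains with a one-line group classification and a parity check, and it also makes explicit (which the paper glosses over) that $U=-I$ induces an involutory dual automorphism, by noting that $\phi$ sends $v$ to $\{y\in P:\langle -v,y\rangle=1\}$ and back. The paper's approach buys elementarity, requiring no appeal to the automorphism group of the cycle, but yours is both shorter and more transparent about where the parity obstruction originates. One small remark: the observation that the second representative $(c\mp1)/2+n$ preserves parity because $n$ is even is harmless but unnecessary, since exactly one of $(c-1)/2,(c+1)/2$ is already even and provides the required vertex.
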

\begin{proof}
We begin by noting that an involutory dual automorphism partners each vertex with a facet, in such a way that if vertex $v$ is in facet $f$, then the partner vertex of $f$ must be contained in the partner facet of $v$.

Now suppose that we have an even polygon with our vertices labeled $0$, $1$, $2$, $\dots$, $n-1$, going in order around the polygon, and we have likewise labeled the facets by ordered pairs of the vertices they contain, so that facet $(i,i+1)$ contains vertices $i$ and $i+1$, and the addition operation in the expression $i+1$ is understood to be modulo $n$.

Now suppose we have a pairing of vertices and facets that corresponds to an involutory dual automorphism.  Suppose that vertex $0$ has facet $(i,i+1)$ as its partner.  Then vertex $i$ must be paired with a facet that contains $0$, either (Case 1) facet $(0,1)$ or (Case 2) facet $(n-1,0)$.

In Case 1, $0$ is paired with $(i,i+1)$ and $i$ with $(0,1)$.  Then $1$ must be paired with a facet that contains $i$, but there is only one of those that isn't paired yet: $(i-1,i)$. In fact, from this point on, every pairing is necessitated by the pairings that we have already looked at.  In the order that we can deduce them, and including those previously decided, they are:

\begin{align*}
0&\leftrightarrow(i,i+1)\\
i&\leftrightarrow(0,1)\\
1&\leftrightarrow(i-1,i)\\
i-1&\leftrightarrow(1,2)\\
2&\leftrightarrow(i-2,i-1)\\
i-2&\leftrightarrow(2,3)\\
\vdots
\end{align*}

See Figure \ref{evenpolygons} (left) for a diagram of the pairings.

\begin{figure}
\begin{minipage}[c]{0.5\textwidth}
\centering
\begin{tikzpicture}[>=angle 60,x=0.7cm,y=0.7cm]
\draw[fill] (135:3) circle[radius=0.1] node[above left] {$0$};
\draw[fill] (105:3) circle[radius=0.1] node[above] {$1$};
\draw[fill] (75:3) circle[radius=0.1] node[above] {$2$};
\draw[fill] (45:3) circle[radius=0.1] node[above right] {$3$};
\draw (150:3) -- (135:3) -- (105:3) -- (75:3) -- (45:3) -- (30:3);
\draw[fill] (-135:3) circle[radius=0.1] node[below left] {$i+1$};
\draw[fill] (-105:3) circle[radius=0.1] node[below left] {$i$};
\draw[fill] (-75:3) circle[radius=0.1] node[below right] {$i-1$};
\draw[fill] (-45:3) circle[radius=0.1] node[below right] {$i-2$};
\draw (-150:3) -- (-135:3) -- (-105:3) -- (-75:3) -- (-45:3) -- (-30:3);
\node at (0:3) {$\vdots$};
\node at (180:3) {$\vdots$};
\draw[<->,red] (135:2.9) -- (-120:2.9);
\draw[<->,red] (120:2.9) -- (-105:2.9);
\draw[<->,red] (105:2.9) -- (-90:2.9);
\draw[<->,red] (90:2.9) -- (-75:2.9);
\draw[<->,red] (75:2.9) -- (-60:2.9);
\draw[<->,red] (60:2.9) -- (-45:2.9);
\end{tikzpicture}
\end{minipage}
\begin{minipage}[c]{0.5\textwidth}
\centering
\begin{tikzpicture}[>=angle 60,x=0.7cm,y=0.7cm]
\draw[fill] (135:3) circle[radius=0.1] node[above left] {$n-3$};
\draw[fill] (105:3) circle[radius=0.1] node[above left] {$n-2$};
\draw[fill] (75:3) circle[radius=0.1] node[above right] {$n-1$};
\draw[fill] (45:3) circle[radius=0.1] node[above right] {$0$};
\draw (150:3) -- (135:3) -- (105:3) -- (75:3) -- (45:3) -- (30:3);
\draw[fill] (-135:3) circle[radius=0.1] node[below left] {$i+1$};
\draw[fill] (-105:3) circle[radius=0.1] node[below left] {$i$};
\draw[fill] (-75:3) circle[radius=0.1] node[below right] {$i-1$};
\draw[fill] (-45:3) circle[radius=0.1] node[below right] {$i-2$};
\draw (-150:3) -- (-135:3) -- (-105:3) -- (-75:3) -- (-45:3) -- (-30:3);
\node at (0:3) {$\vdots$};
\node at (180:3) {$\vdots$};
\draw[<->,red] (45:2.9) -- (-120:2.9);
\draw[<->,red] (60:2.9) -- (-105:2.9);
\draw[<->,red] (75:2.9) -- (-90:2.9);
\draw[<->,red] (90:2.9) -- (-75:2.9);
\draw[<->,red] (105:2.9) -- (-60:2.9);
\draw[<->,red] (120:2.9) -- (-45:2.9);
\end{tikzpicture}
\end{minipage}%
\caption{Diagram of pairings in Case 1 (left) and Case 2 (right) of Theorem \ref{evenpolygonthm}.}
\label{evenpolygons}\end{figure}
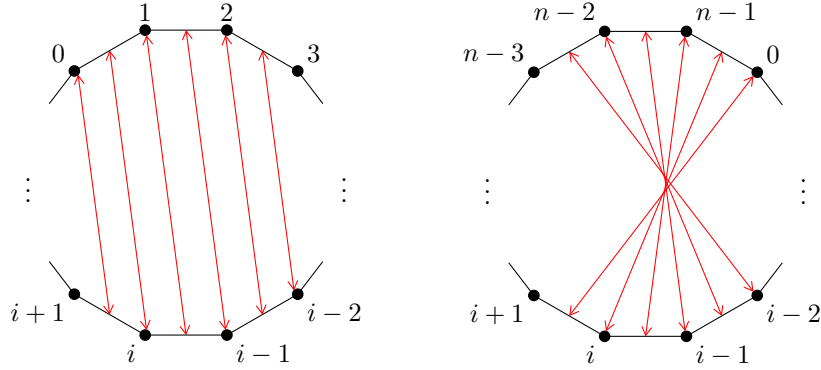

We see that there are two alternating patterns in which vertices $0,1,2,\dots$ are paired with facets $(i,i+1),(i-1,i),(i-2,i-1),\dots$ respectively, and vertices $i,i-1,i-2,\dots$ are paired with facets $(0,1),(1,2),(2,3),\dots$ respectively.  In both patterns, vertex $k$ is paired with facet $(i-k,i-k+1)$ (using addition modulo $n$), hence the patterns are consistent with one another.

Consider now the situation when $i$ is even and $k=\frac{i}{2}$.  Then vertex $k$ is paired with facet $(k,k+1)$.  On the other hand, if $i$ is odd, then when $k=\frac{i+1}{2}$, vertex $k$ is paired with facet $(k-1,k)$.  In either case, the pattern of pairings forces the existence of a vertex paired with one of the facets containing it, which, by Lemma \ref{dualautolemma}, means that this dual automorphism cannot be realized by a negative self-polarity.

Now for Case 2, in which $0$ is paired with $(i,i+1)$ and $i$ with $(n-1,0)$.  A similar situation arises here:
\begin{align*}
0&\leftrightarrow(i,i+1)\\
i&\leftrightarrow(n-1,0)\\
n-1&\leftrightarrow(i-1,i)\\
i-1&\leftrightarrow(n-2,n-1)\\
n-2&\leftrightarrow(i-2,i-1)\\
i-2&\leftrightarrow(n-3,n-2)\\
\vdots
\end{align*}

See Figure \ref{evenpolygons} (right) for a diagram of the pairings.

We see that there are two alternating patterns in which vertices $0,n-1,n-2,\dots$ are paired with facets $(i,i+1),(i-1,i),(i-2,i-1),\dots$ respectively, and vertices $i,i-1,i-2,\dots$ are paired with facets $(n-1,0),(n-2,n-1),(n-3,n-2),\dots$ respectively.  In the first pattern, vertex $k$ is paired with facet $(k+i,k+i+1)$, while in the second pattern, vertex $k$ is paired with facet $(k-i-1,k-i)$, where all of these expressions are modulo $n$.

These two patterns must be consistent with one another, of course.  The first pattern pairs vertex $0$ with facet $(i,i+1)$, while the second pairs vertex $0$ with facet $(n-i-1,n-1)$.  This must be the same facet, so we have $i=n-i-1$, which implies $n=2i+1$, meaning that $n$ is odd, contrary to our assumption.
\end{proof}

\subsection{Three Dimensions}

Having completed our investigation in two dimensions, we proceed to three dimensions.  Here, we begin with the happy discovery that, as with two dimensions, every self-dual three-dimensional polytope has a self-polar realization.

First we need a lemma.

\begin{lemma}\label{tangentlemma}
If the affine hull of a face $f$ of a polytope $P\in \Pzd$ is tangent to an origin-centered sphere of radius $r$ at point $x$, then the affine hull of the corresponding dual face $g$ of $P^\circ$ is tangent to an origin-centered sphere of radius $1/r$ at point $x/r^2$.
\end{lemma}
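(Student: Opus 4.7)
The plan is to translate the geometric tangency hypothesis into an algebraic identity for $\aff(f)$, then identify $\aff(g)$ by its defining linear equations, and finally verify directly that $x/r^{2}$ lies on $\aff(g)$ inside the appropriate tangent hyperplane to the smaller sphere.

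First I would unpack the tangency hypothesis. Since $\aff(f)$ is tangent to the sphere of radius $r$ at $x$, the point $x$ is the foot of the perpendicular from the origin to $\aff(f)$. Equivalently, $|x|=r$ and $\aff(f)-x$ is orthogonal to $x$, which gives the identity
\[
\langle x,z\rangle=r^{2}\quad\text{for every }z\in\aff(f).
\]
In particular, the origin does not lie in $\aff(f)$.

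Next I would use the standard description of the dual face, namely $g=\{y\in P^\circ:\langle y,z\rangle=1\text{ for all }z\in f\}$. Since $z\mapsto\langle y,z\rangle$ is affine, any $y\in g$ must actually satisfy $\langle y,z\rangle=1$ for every $z\in\aff(f)$, and passing to affine combinations the same holds for every $y\in\aff(g)$. Thus $\aff(g)$ is contained in
\[
A:=\{y\in\Rd:\langle y,z\rangle=1\text{ for every }z\in\aff(f)\},
\]
and a dimension count forces equality: if $\dim\aff(f)=k$, then $f$ contains $k+1$ affinely independent points which are in fact linearly independent (because $0\notin\aff(f)$), so $A$ is cut out by $k+1$ independent inhomogeneous linear equations and has dimension $d-1-k$, matching $\dim\aff(g)$.

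With $\aff(g)=A$ in hand, the point $x/r^{2}$ satisfies $\langle x/r^{2},z\rangle=r^{2}/r^{2}=1$ for every $z\in\aff(f)$, so $x/r^{2}\in\aff(g)$. Moreover $\aff(g)\subseteq A\subseteq\{y\in\Rd:\langle x,y\rangle=1\}$, and this last hyperplane is tangent to the sphere of radius $1/r$ precisely at $x/r^{2}$ (the foot of the perpendicular from the origin has length $1/r$ and equals $x/r^{2}$). Combining these two facts yields the desired tangency. The main technical point is the identification $\aff(g)=A$; everything else is routine bookkeeping. The linear independence step relies essentially on $0\notin\aff(f)$, which is precisely why the hypothesis demands tangency to a positive-radius sphere.
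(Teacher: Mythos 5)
Your proof is correct and follows essentially the same route as the paper's: identify $x/r^{2}$ as a point of $\aff(g)$ via the relation $\langle x,z\rangle=r^{2}$ on $\aff(f)$, then observe that $\aff(g)$ lies in the hyperplane normal to $x$ at that point, which pins down the tangency. The only difference is that you make explicit, via the dimension count using $0\notin\aff(f)$, the identification $\aff(g)=\{y:\langle y,z\rangle=1\text{ for all }z\in\aff(f)\}$, a standard duality fact that the paper invokes without spelling out; this adds a bit of rigor but not a new idea.
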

\begin{proof}
Let $x'$ be the point of tangency for $\aff(f)$.  Then $f$ must be orthogonal to $x'$, so every $x\in f$ can be expressed as $x=x'+w$ where $w$ is some vector orthogonal to $x'$.

Now consider $y'=x'/r^2$.  We know $y'$ must be in $\aff(g)$ because for any $x\in f$ there is some $w$ orthogonal to $x'$ so that we have $x=x'+w$, which gives us
\[\langle x, y'\rangle=\left\langle x'+w,\frac{x'}{r^2}\right\rangle=\frac{1}{r^2}\left(\langle x',x'\rangle+\langle w,x'\rangle\right)=\frac{1}{r^2}(r^2+0)=1\]

Now consider that any vector which lies in $g$ must be orthogonal to $y'$.  This is because any vector lying in $g$ is of the form $y_1-y_2$ for some $y_1,y_2\in g$, and we have
\begin{multline*}
\langle y', y_1-y_2\rangle=\left\langle\frac{x'}{r^2},y_1-y_2\right\rangle=\frac{1}{r^2}\langle x',y_1-y_2\rangle\\=\frac{1}{r^2}\left(\langle x',y_1\rangle-\langle x',y_2\rangle\right)=\frac{1}{r^2}(1-1)=0
\end{multline*}

Since $g$ is orthogonal to $y'$, and $|y'|=1/r$, the lemma's statement follows.
\end{proof}

\begin{theorem}
Every self-dual three-dimensional polytope has a self-polar realization.
\end{theorem}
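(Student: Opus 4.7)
My plan is to combine Lemma \ref{tangentlemma} with the Koebe--Andreev--Thurston theorem, which asserts that every combinatorial type of 3-polytope admits a realization possessing a \emph{midsphere}, i.e., a sphere tangent to every edge. After normalization, we may assume the midsphere is the unit sphere centered at the origin, producing a realization $P\in\mathcal{P}^3_0$.

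Applying Lemma \ref{tangentlemma} to each edge of $P$ with $r=1$, the polar polytope $P^\circ$ is tangent to the same unit sphere along every one of its edges, and each tangent point of an edge of $P^\circ$ coincides with the tangent point of the dual edge of $P$. Hence $P$ and $P^\circ$ are two midsphere polytopes whose edges touch the unit sphere at an identical collection of contact points; along each such contact point, the corresponding edges of $P$ and $P^\circ$ are mutually perpendicular tangent lines.

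Now invoke self-duality: a dual automorphism $\phi$ of the common face lattice pairs the edges of $P$ with those of $P^\circ$, and under this pairing the contact points on the sphere already agree. A midsphere polytope is determined by its contact-point set together with the combinatorial incidence structure (each vertex is the apex of the right cone tangent to the sphere along the circle through the contact points of its incident edges; each facet plane is the tangent plane at the radical center of the same circles). Choosing the midsphere realization in canonical form --- for example, with the centroid of the contact points sitting at the origin, a normalization invariant under polarity since $P$ and $P^\circ$ share the same contact-point set --- rigidity of the canonical midsphere representation (uniqueness up to M\"obius transformations of the sphere, collapsed to the orthogonal group by the centroid normalization) forces the combinatorial identification provided by $\phi$ to be realized by an orthogonal transformation $U$ of $\mathbb{R}^3$. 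By construction, $UP^\circ=P$, and the verification of self-polarity can then be checked against Theorem \ref{selfpolarconditions}.

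The main obstacle is the final step: upgrading the combinatorial self-duality $\phi$ to a genuine orthogonal map of $\mathbb{R}^3$. Existence of a midsphere realization is classical, but the rigidity that lets one pass from a combinatorial identification to an orthogonal geometric one is more delicate and relies on the canonical normalization being compatible with polarity. If this rigidity is not available in precisely the required form, a direct alternative is to define $U$ first on the contact-point set using $\phi$ and then verify --- vertex by vertex, via the tangent-cone description --- that this map extends to a well-defined orthogonal transformation of $\mathbb{R}^3$ sending $P^\circ$ to $P$.
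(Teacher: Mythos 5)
Your proof follows essentially the same route as the paper's: realize the combinatorial type as an edge-tangent (midsphere) polytope with the barycenter of the tangency points at the origin, use Lemma \ref{tangentlemma} to conclude that $P^\circ$ shares the same tangency points and hence the same normalization, and invoke the uniqueness of that canonical edge-tangent realization up to orthogonal transformations (Schramm's theorem) to produce the required $U$. The hesitation you express about the final rigidity step is unwarranted --- the barycentered midscribed realization is unique up to $O(3)$ by exactly the result you cite, which is what the paper relies on as well.
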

\begin{proof}
The Koebe-Andreev-Thurston theorem implies that each 3-connected planar
graph can be realized by a 3-polytope which has all edges tangent to the unit sphere and that the edge-tangent realization for which the barycenter of the tangency points is the center of the sphere is unique up to orthogonal transformations.  See \cite{schramm} for more details and a proof.

For a given combinatorial type, let us call this realization $P$.  From Lemma \ref{tangentlemma}, the edges of $P^\circ$ will also be tangent to the unit sphere, with their tangency points in the same locations as those of $P$, hence having their barycenter at the origin.  If the polar $P^\circ$ is of the same combinatorial type as $P$, then its realization in this form is unique up to orthogonal transformation, hence we have $P=UP^\circ$ for some orthogonal transformation $U$ which maps edge-tangency points onto edge-tangency points.
\end{proof}

We would also like to know more specifically which three-dimensional polytopes can be realized as negatively self-polar.  It is not very difficult to construct a few examples that demonstrate we can have (almost) any number of vertices we like.

\begin{theorem} \label{d3types}
For $n=4$ and $n \geq 6$, there exists a 3-dimensional polytope $P=-P^\circ$ with $n$ vertices.
\end{theorem}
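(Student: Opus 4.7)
My plan is to exhibit explicit realizations for each admissible $n$, splitting into three cases: $n=4$, even $n \geq 6$, and odd $n \geq 7$.

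For $n=4$, the regular tetrahedron $P = \conv\{(1,1,1),(1,-1,-1),(-1,1,-1),(-1,-1,1)\}$ is negatively self-polar: a direct computation gives $\langle -v_i, v_j\rangle = 1$ for every $i \neq j$, while $\langle -v_i, v_i\rangle = -3$. Under the induced dual automorphism each vertex is paired with the opposite triangular facet, so both conditions of Theorem \ref{selfpolarconditions} with $U=-I$ are verified immediately.

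For even $n \geq 6$, I would realize $P$ as a pyramid over a negatively self-polar $(n-1)$-gon. Since $n-1$ is odd, the corollary following Theorem \ref{polygonthm} supplies a realization $Q \subset \mathbb{R}^2$ with vertices $u_1,\dots,u_{n-1}$ normalized so that $\langle u_i, u_j\rangle = -1$ precisely when $u_j$ lies in the edge paired with $u_i$ by the 2D involution. Fix any $c>0$, set $s=\sqrt{1+c^2}$, and take $P$ to be the convex hull of the apex $v_0 = (0,0,1/c)$ and the lifted base vertices $v_i = (su_i, -c)$ for $i=1,\dots,n-1$. The key identity $\langle -v_i, v_j\rangle = -s^2\langle u_i, u_j\rangle - c^2$, combined with $s^2 - c^2 = 1$ and $ch=1$, reduces the 3D conditions of Theorem \ref{selfpolarconditions} (for $U=-I$) to the 2D ones already verified for $Q$: the apex is paired to the base facet, and each base vertex $v_i$ is paired to the triangular facet $\{v_0,v_j,v_{j+1}\}$ lifted over the 2D-paired edge $(u_j,u_{j+1})$.

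For odd $n \geq 7$, the pyramid strategy is obstructed by Theorem \ref{evenpolygonthm}, so I would instead build $P$ by modifying the negatively self-polar pyramid for $n' = n-1$ (now even and $\geq 6$). The natural operation is a symmetric vertex-addition: append a new vertex $w$ together with its $-I$-paired facet (the one lying in the hyperplane $\langle -w, \cdot\rangle = 1$), chosen so that the new face lattice is still self-dual, the origin remains in the interior, and the inequalities $\langle -x, y\rangle \leq 1$ of Theorem \ref{selfpolarconditions} continue to hold across every new vertex-facet pair. A vertex-addition operation of this type preserving self-polarity is developed in Section \ref{sec:modifications}, and its methods would be invoked to justify the construction. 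The main obstacle is precisely this odd case: for even $n$ the combinatorics lifts cleanly from two dimensions via the pyramid, but no analogous 2D scaffold exists for odd $n$, and one must show simultaneously that the symmetric vertex-addition yields a valid self-dual face lattice with exactly $n$ vertices and that the algebraic negative self-polarity conditions survive the modification.
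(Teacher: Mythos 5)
Your $n=4$ and even $n\geq 6$ cases are correct and match the spirit of the paper, which realizes every even $n$ as a pyramid over a negatively self-polar odd $(n-1)$-gon (your tetrahedron is the $n=4$ instance of that same family). The odd case, however, is a genuine gap, and you acknowledge it yourself: you gesture at the add-and-cut machinery of Section \ref{sec:modifications} but never produce a construction, never verify the two conditions of Theorem \ref{selfpolarconditions}, and never confirm the vertex count. That last point is not a formality. Adding a vertex $x$ beyond a facet $f$ and simultaneously cutting off the vertex $v$ dual to $f$ changes the vertex count by $+1$ (for $x$), $-1$ (for $v$), and $+\deg(v)$ (for the new vertices introduced by the cut); in a $3$-polytope $\deg(v)\geq 3$, so starting from $n'=n-1$ you land at $n+2$ or higher, not at $n$. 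So even granting that the add-and-cut operation could be made to apply, your bookkeeping would not give the claimed vertex number.

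The paper handles odd $n\geq 7$ by two explicit families that sidestep the modification machinery entirely: for $n\equiv 3\pmod 4$ it takes a pyramid over an odd $(2k+1)$-gon glued to a prism over the same polygon (giving $4k+3$ vertices), and for $n\equiv 1\pmod 4$ it takes a pyramid over an even $2k$-gon with deep truncations at each base vertex (giving $4k+1$ vertices, $k\geq 2$). Both come with explicit coordinates so that the conditions of Theorem \ref{selfpolarconditions} with $U=-I$ can be checked directly. If you wish to salvage the modification route, you would need to start from an even $n'$ satisfying $n'=n-\deg(v)$ for a suitable vertex $v$, verify the commutativity hypothesis $[(P\cap Ux^\circ)\cup x]=[P\cup x]\cap Ux^\circ$, and check that the result is still a polytope of the right combinatorial type; none of that is done here.
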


\begin{proof}
The proof is by construction of three different types of polytopes: those with an even number of vertices, those with vertex number congruent to $3\mod4$, and those with vertex number congruent to $1\mod4$.

For the first type, those with an even number of vertices, we construct a pyramid over an odd polygon.  For $k\in\mathbb{N}$, let $P$ be the convex hull of $(0,0,1)$ and
\[\left(\sqrt{2\sec\left(\frac{\pi}{2k+1}\right)}\cos\left(\frac{2\pi i}{2k+1}\right), \sqrt{2\sec\left(\frac{\pi}{2k+1}\right)}\sin\left(\frac{2\pi i}{2k+1}\right), -1\right)\]
for $i=1,2,3,...,2k+1$.  Then $P=-P^\circ$ and has $2k+2$ vertices.

For the second type, we construct a pyramid over an odd polygon, glued to a prism over the same polygon.  For $k\in\mathbb{N}$, let $P$ be the convex hull of $(0,0,1)$ and
\[\left(\sqrt{\sec\left(\frac{\pi}{2k+1}\right)}\cos\left(\frac{2\pi i}{2k+1}\right), \sqrt{\sec\left(\frac{\pi}{2k+1}\right)}\sin\left(\frac{2\pi i}{2k+1}\right), 0\right)\]
for $i=1,2,3,...,2k+1$, and
\[\left(\sqrt{\sec\left(\frac{\pi}{2k+1}\right)}\cos\left(\frac{2\pi i}{2k+1}\right), \sqrt{\sec\left(\frac{\pi}{2k+1}\right)}\sin\left(\frac{2\pi i}{2k+1}\right), -1\right)\]
for $i=1,2,3,...,2k+1$.  Then $P=-P^\circ$ and has $4k+3$ vertices.

For the third type, we construct a pyramid over an even polygon with deep truncations at each vertex of the base.  For $k\in\mathbb{N}$ and $k\geq2$, let $P$ be the convex hull of $\left(0,0,\cot\left(\frac{\pi}{2k}\right)\right)$ and
\[\left(\cos\left(\frac{\pi i}{k}\right), \sin\left(\frac{\pi i}{k}\right), 0\right)\]
for $i=1,2,3,...,2k$, and
\[\left(\sec\left(\frac{\pi}{2k}\right)\cos\left(\frac{\pi i}{k}+\frac{\pi}{2k}\right), \sec\left(\frac{\pi}{2k}\right)\sin\left(\frac{\pi i}{k}+\frac{\pi}{2k}\right), -\tan\left(\frac{\pi}{2k}\right)\right)\]
for $i=1,2,3,...,2k$.  Then $P=-P^\circ$ and has $4k+1$ vertices.
\end{proof}

Shown in Figure \ref{3dfigures} is an example with $k=2$ for each of the constructions.

\begin{figure}

\centering
\includegraphics[width=\textwidth]{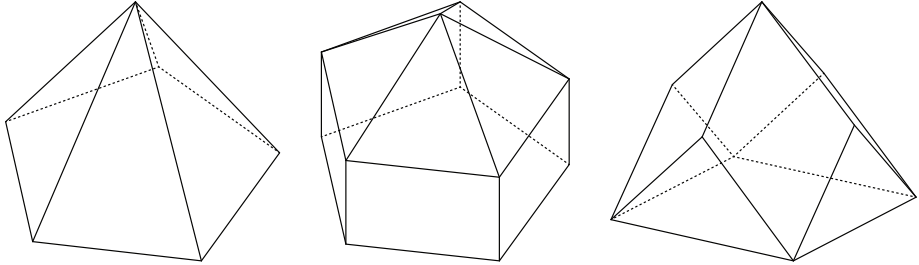}
\caption{Examples of the constructions in Theorem \ref{d3types} with $k=2$.}
\label{3dfigures}\end{figure}

As for five vertices, which was missing from Theorem \ref{d3types}, there are only two different combinatorial types of polytopes with 5 vertices in $\mathbb{R}^3$, and, of those, only the square pyramid is self-dual.  It turns out that the square pyramid cannot be realized as negatively self-polar because its base cannot be realized as negatively self-polar.  The proof of this fact appears a bit later, however, in Theorem \ref{negpyrbasethm}.

Now that we know how many vertices are possible for negatively self-polar three-dimensional polytopes, we also know the possible $f$-vectors $(f_0,f_1,f_2)$.

\begin{corollary}
If $P=-P^\circ$ for $P\in\mathcal{P}_0^3$, then $f(P)=(n,2n-2,n)$ for some $n\geq 4$ and $n\neq 5$.
\end{corollary}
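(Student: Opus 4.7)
The plan is to extract the $f$-vector structure from combinatorial self-duality (which is forced by self-polarity) together with Euler's formula, then identify the admissible vertex counts.

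First I would note that any self-polar polytope is self-dual, because the polar polytope $P^\circ$ is combinatorially dual to $P$ by Lemma \ref{duallemma}, and applying the orthogonal transformation in $P = -P^\circ$ does not change the combinatorial type. In three dimensions, a dual isomorphism is an order-reversing bijection on a rank-$4$ graded lattice, so it swaps elements of rank $1$ with those of rank $3$; that is, it matches vertices with facets. Hence $f_0 = f_2$, and writing this common value as $n$, the Euler-Poincar\'e formula $f_0 - f_1 + f_2 = 2$ immediately gives $f_1 = 2n - 2$.

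It remains to establish $n \geq 4$ and $n \neq 5$. The lower bound is automatic: a full-dimensional polytope in $\mathbb{R}^3$ requires at least four affinely independent vertices, so $n \geq 4$. To exclude $n = 5$, I would invoke the classification observation recorded just above the corollary statement: among combinatorial types of $3$-polytopes on five vertices, the square pyramid is the only self-dual one, so any $P \in \mathcal{P}_0^3$ with $f_0 = 5$ that is self-polar must be a square pyramid. The forthcoming Theorem \ref{negpyrbasethm} shows that a pyramid cannot be realized negatively self-polar unless its base can, but the square base is an even polygon and so, by Theorem \ref{evenpolygonthm}, admits no negative self-polarity. This contradiction rules out $n = 5$.

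The only real work is combinatorial bookkeeping; the main obstacle is simply that the $n = 5$ exclusion is not purely internal to this corollary but relies on the pyramid reduction (Theorem \ref{negpyrbasethm}) combined with the even-polygon result (Theorem \ref{evenpolygonthm}). Provided those are in hand, the corollary reduces to citing them after the Euler-formula computation.
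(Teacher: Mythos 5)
Your proposal is correct and takes essentially the same route as the paper: both derive $f_0 = f_2 = n$ from self-duality, apply the Euler–Poincar\'{e} formula to get $f_1 = 2n - 2$, and exclude $n = 5$ by appealing to Theorem~\ref{negpyrbasethm} (together with the square pyramid's even-polygon base failing Theorem~\ref{evenpolygonthm}), while Theorem~\ref{d3types} supplies the admissible vertex counts. The paper's proof is terser, deferring the $n=5$ argument to the discussion just preceding the corollary, but the content is the same.
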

\begin{proof}
The Euler-Poincar\'{e} formula tells us that $-1+f_0-f_1+f_2-1=0$, hence $f_1=f_0+f_2-2$.  We must have $f_0=f_2$ for any self-dual three-dimensional polytope, so this becomes $f_1=2f_0-2$.  We know the possible values of $n$ from Theorem \ref{d3types} and \ref{negpyrbasethm}.
\end{proof}

\section{In Higher and Lower Dimensions}\label{sec:higherandlower}

\subsection{Pyramids}

It was simple to construct concrete examples of self-polar polytopes in two dimensions, and not very difficult in three, but in higher dimensions, the path forward is less clear.  In order for us to climb from one dimension to the next, we begin with a pyramidal construction that allows us to construct self-polar pyramids in higher dimensions over self-polar bases.

\begin{theorem} \label{pyramidthm}
For a polytope $P\in\Pzd$ that is self-polar by orthogonal transformation $U$, a pyramid over $P$ can be realized as self-polar by the transformation $W$, where
\begin{equation*}
W:=
\begin{pmatrix}
U & 0 \\
0 & -1
\end{pmatrix}
\end{equation*}
\end{theorem}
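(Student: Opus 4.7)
The plan is to realize the pyramid in $\mathbb{R}^{d+1}$ as the polytope $Q$ whose vertices are the apex $A := (0,\dots,0,1)$ together with the lifted, scaled base vertices $B_i := (\sqrt{2}\,v_i,\,-1)$, one for each $v_i\in\vertop(P)$, and then verify the two conditions of Theorem \ref{selfpolarconditions} for $Q$ and the orthogonal map $W$. The scaling factor $\sqrt{2}$ is not cosmetic; see the final paragraph.

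First I would check that $Q\in\mathcal{P}_0^{d+1}$. Since the origin lies in the interior of $P$, the point $(0,\dots,0,-1)$ lies in the relative interior of the base facet $\sqrt{2}P\times\{-1\}$, and the origin is the midpoint of the segment from this point to the apex $A$, hence lies in the interior of $Q$. Combinatorially, $Q$ has the pyramid face lattice: one base facet (on the hyperplane $x_{d+1}=-1$) plus one ``slanted'' facet $F_G=\conv\!\bigl(\{A\}\cup\{B_i:v_i\in\vertop(G)\}\bigr)$ for each facet $G$ of $P$.

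For Condition 1 I need $\langle Wx,y\rangle\le 1$ for all pairs of vertices. A direct computation using the block form gives $WB_i=(\sqrt{2}\,Uv_i,\,1)$ and $WA=(0,\dots,0,-1)$, so there are only four inner-product types to check:
\begin{align*}
\langle WB_i,B_j\rangle &= 2\langle Uv_i,v_j\rangle - 1 \le 1, \\
\langle WA,B_j\rangle &= 1, \\
\langle WB_i,A\rangle &= 1, \\
\langle WA,A\rangle &= -1,
\end{align*}
where the first bound uses condition (1) of Theorem \ref{selfpolarconditions} applied to $P$ and $U$. For Condition 2, the base facet $\sqrt{2}P\times\{-1\}$ is witnessed by the apex, since $\langle WA,B_i\rangle=1$ for every $i$. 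For a slanted facet $F_G$, let $v^G\in\vertop(P)$ be the vertex supplied by Theorem \ref{selfpolarconditions}(2) applied to $P$, so that $\langle Uv^G,v_i\rangle=1$ for every $v_i\in\vertop(G)$; then the vertex $(\sqrt{2}\,v^G,-1)$ of $Q$ witnesses $F_G$, because $\langle W(\sqrt{2}\,v^G,-1),A\rangle=1$ and $\langle W(\sqrt{2}\,v^G,-1),B_i\rangle = 2\langle Uv^G,v_i\rangle-1 = 1$ for all $v_i\in\vertop(G)$. Theorem \ref{selfpolarconditions} then yields $Q=WQ^\circ$.

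The main obstacle I expect is precisely getting the proportions of the pyramid right. The obvious attempt with unscaled base $P\times\{-1\}$ and apex $(0,1)$ collapses on Condition 2: on a slanted facet $F_G$ the only plausible witness is the lift of $v^G$, but its inner product against another base vertex of $F_G$ becomes $\langle Uv^G,v_i\rangle-1=0\ne 1$. The scaling by $\sqrt{2}$ is exactly the correction that turns this $0$ into $1$ while leaving the apex-to-base inner products at $1$; more generally, placing the base at height $-a$ with scaling $\sqrt{1+a^2}$ and the apex at height $1/a$ works for any $a>0$, and $a=1$ is the cleanest choice.
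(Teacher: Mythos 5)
Your proof is correct and follows essentially the same route as the paper: lift and scale the base, place the apex on the opposite side of the base hyperplane at the reciprocal height, and verify both conditions of Theorem~\ref{selfpolarconditions}. The paper keeps the parameter $a$ free (base at height $a$ scaled by $\sqrt{1+a^2}$, apex at $(0,\dots,0,-1/a)$), while you specialize to the cleanest value $a=1$ and note the general family at the end; the computations are otherwise the same.
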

\begin{proof}
First we will construct a realization of the pyramid.  Let $Q\subseteq\mathbb{R}^{d+1}$ be the pyramid formed by appending $a\in\mathbb{R}\setminus\lbrace0\rbrace$ as the last coordinate to $\sqrt{1+a^2}P$, and taking the convex hull with $\left( 0,...,0,-\frac{1}{a} \right)$.  For $x=(x_1,x_2,\dots,x_d)\in\Rd$ and $y\in\mathbb{R}$, we will use the notation $(x;y)$ to stand for $(x_1,x_2,\dots,x_d,y)\in\mathbb{R}^{d+1}$.

Now we will use Theorem \ref{selfpolarconditions} to show that $Q$ is self-polar by $W$.  For the first condition, we need to identify the vertices of $Q$, which are $\left( 0,...,0,-\frac{1}{a} \right)$ and $\left(\sqrt{1+a^2}v; a\right)$ for $v\in\vertop(P)$.  Let $v,w$ be any two vertices of $P$.  Then we have
\begin{equation}\label{pyramideq1}
\begin{split}
\left\langle \left(\sqrt{1+a^2}v; a\right),W\left(\sqrt{1+a^2}w; a\right)\right\rangle
&= \left\langle \left(\sqrt{1+a^2}v; a\right),\left(\sqrt{1+a^2}Uw; -a\right)\right\rangle\\
&= \left\langle \sqrt{1+a^2}v,\sqrt{1+a^2}Uw\right\rangle-a^2\\
&= (1+a^2)\langle v,Uw\rangle-a^2\\
&\leq(1+a^2)(1)-a^2\\
&=1
\end{split}
\end{equation}

As for the apex vertex of $Q$, we have
\begin{equation}\label{pyramideq2}
\begin{split}
\left\langle \left(\sqrt{1+a^2}v; a\right),W\left( 0,...,0,-\frac{1}{a} \right)\right\rangle
&= \left\langle \left(\sqrt{1+a^2}v; a\right),\left( 0,...,0,\frac{1}{a} \right)\right\rangle\\
&= a\cdot\frac{1}{a}\\
&=1
\end{split}
\end{equation}

Hence not only do the vertices of $Q$ fulfill the first condition of Theorem \ref{selfpolarconditions}, but from Equation \ref{pyramideq2} we can see that we have also fulfilled the second condition for the facet of $Q$ that is the scaled and translated copy of $P$.

As for the other facets of $Q$, each facet $F$ of $Q$ corresponds to a facet $\hat{F}$ of $P$ in such a way that $F$ has as its vertices $\left( 0,...,0,-\frac{1}{a} \right)$ and $\left(\sqrt{1+a^2}v; a\right)$ for $v\in\vertop(\hat{F})$.  Since $P$ is self-polar by $U$, we know that for each facet $\hat{F}$ of $P$ there is a vertex $w$ of $P$ for which $\langle v,Uw\rangle=1$ for all $v\in\vertop(\hat{F})$.  Then Equation \ref{pyramideq1} applies, but with equality rather than inequality, and Equation \ref{pyramideq2} applies with $w$ substituted for $v$.  Thus we have fulfilled the second condition of Theorem \ref{selfpolarconditions} for all facets of $Q$.
\end{proof}

\subsection{Joins}

For polytopes $P\in\mathcal{P}_0^{d_1}$ and $Q\in\mathcal{P}_0^{d_2}$, the \emph{join} of $P$ and $Q$, denoted $P*Q$, is the combinatorial type of a $d_1+d_2+1$ polytope which can be realized by embedding $P$ and $Q$ into orthogonal subspaces of $\mathbb{R}^{d_1+d_2+1}$, then translating the two subspaces apart along the one-dimensional subspace of $\mathbb{R}^{d_1+d_2+1}$ that is orthogonal to both, and finally taking the convex hull.

For every face $f$ of $P$ and $g$ of $Q$, the join $f*g$ is a face of $P*Q$, and conversely, every face of $P*Q$ is the join of a face from $P$ with a face from $Q$.  Here, we are including $P$ and $Q$ as well as the empty set as faces.  (Note that $f*\emptyset=f$.)  Just as with $P$ and $Q$ themselves, the dimension of a face $f*g$ of $P*Q$ is $\dim(f)+\dim(g)+1$. \cite{handbook15}

Since a pyramid is the join of a polytope with a point, it seems natural to suppose that, as with pyramids, joins of self-polar polytopes might be self-polar.  This is indeed the case.

\begin{theorem} \label{jointhm}
For polytopes $P_1\in\mathcal{P}_0^{d_1}$ and $P_2\in\mathcal{P}_0^{d_2}$ that are self-polar by orthogonal transformations $U_1$ and $U_2$ respectively, the join of $P_1$ and $P_2$ can be realized as self-polar by the transformation
\begin{equation*}
W:=
\begin{pmatrix}
U_1 & 0 & 0 \\
0 & U_2 & 0 \\
0 & 0 & -1
\end{pmatrix}
\end{equation*}
\end{theorem}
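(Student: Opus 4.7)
The plan is to mirror the pyramid proof of Theorem \ref{pyramidthm}: construct an explicit realization of the join by embedding scaled copies of $P_1$ and $P_2$ into orthogonal coordinate subspaces of $\mathbb{R}^{d_1+d_2+1}$, offset along the last coordinate, and then apply Theorem \ref{selfpolarconditions}. Concretely, fix $a\in\mathbb{R}\setminus\{0\}$ and let $Q$ be the convex hull of the points $(\sqrt{1+a^2}\,v_1;\,0;\,a)$ for $v_1\in\vertop(P_1)$ together with $(0;\,\sqrt{1+1/a^2}\,v_2;\,-1/a)$ for $v_2\in\vertop(P_2)$, using the notation $(x;y;t)$ for the obvious stacking into $\mathbb{R}^{d_1}\times\mathbb{R}^{d_2}\times\mathbb{R}$. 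The two sets sit in orthogonal affine flats offset along the final axis, so $Q$ realizes the combinatorial join $P_1*P_2$. The crucial choice is that the offsets $a$ and $-1/a$ multiply to $-1$; combined with the $-1$ entry in the bottom right of $W$, this makes every cross-term inner product below evaluate to exactly $1$. The pyramid proof is recovered as the degenerate case where $P_2$ is a single point.

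To verify condition (1) of Theorem \ref{selfpolarconditions} there are three cases. For two vertices from the embedded $P_1$,
\begin{equation*}
\langle W(\sqrt{1+a^2}\,v_1;\,0;\,a),\,(\sqrt{1+a^2}\,w_1;\,0;\,a)\rangle = (1+a^2)\langle U_1 v_1, w_1\rangle - a^2 \leq 1
\end{equation*}
by self-polarity of $P_1$. Two vertices from the embedded $P_2$ is analogous, with $U_2$ replacing $U_1$ and $1/a^2$ replacing $a^2$. For a cross pair the $U_1$- and $U_2$-blocks contribute $0$ by disjoint support, and the last coordinate contributes $(-a)(-1/a)=1$.

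For condition (2), each facet of $P_1*P_2$ is either $F_1*P_2$ or $P_1*F_2$ for a facet $F_i$ of $P_i$. Given a facet of the first type, self-polarity of $P_1$ yields a vertex $v_1^*\in\vertop(P_1)$ with $\langle U_1 v_1^*, w_1\rangle = 1$ for every $w_1\in\vertop(F_1)$; its embedded counterpart $(\sqrt{1+a^2}\,v_1^*;\,0;\,a)$ then pairs to $1$ with every embedded vertex of $F_1$ (the equality case of the first computation) and with every embedded vertex of $P_2$ (the cross-term computation), so the whole facet is witnessed. Facets of the second type are handled symmetrically using a vertex $v_2^*\in\vertop(P_2)$ furnished by self-polarity of $P_2$. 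The only real work is identifying the correct scaling and offsets; once they are in hand, both conditions of Theorem \ref{selfpolarconditions} fall out by direct computation, so I do not anticipate a genuine obstacle beyond the standard combinatorial fact that every facet of a join has the stated form.
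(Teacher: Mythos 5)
Your proposal is correct and follows essentially the same route as the paper's own proof: the identical explicit realization of the join with scaling factors $\sqrt{1+a^2}$ and $\sqrt{1+1/a^2}$ and offsets $a$ and $-1/a$, the same three-case inner-product computation for condition (1) of Theorem~\ref{selfpolarconditions}, and the same decomposition of facets as $F_1*P_2$ or $P_1*F_2$ for condition (2). The only cosmetic difference is that you place $W$ on the left argument of the inner product while the paper places it on the right, but since $Q=WQ^\circ$ is equivalent to $Q=W^{T}Q^\circ$ for an orthogonal $W$, this is immaterial.
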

\begin{proof}
First we construct a realization of the join.  Let $Q\subseteq\mathbb{R}^{d_1+d_2+1}$ be the join formed by the following process.  To each point of $\sqrt{1+a^2}P_1$, we append $d_2$ zeros and then $a\in\mathbb{R}\setminus\lbrace 0\rbrace$ as the last coordinate.  To each point of $\sqrt{1+\frac{1}{a^2}}P_2$, we insert $d_1$ zeros before the first coordinate, and then append $-\frac{1}{a}$ as the last coordinate.  Finally, we take the convex hull.

For $x=(x_1,x_2,\dots,x_{d_1})\in\mathbb{R}^{d_1}$, $y=(y_1,y_2,\dots,y_{d_2})\in\mathbb{R}^{d_2}$, and $z\in\mathbb{R}$, we will use the notation $(x;y;z)$ to stand for $(x_1,x_2,\dots,x_{d_1},y_1,y_2,\dots,y_{d_2},z)\in\mathbb{R}^{d_1+d_2+1}$.

Now we will use Theorem \ref{selfpolarconditions} to show that $Q$ is self-polar by $W$.  For the first condition, we need to confirm that any two vertices of $Q$ have a dot product of $\leq 1$.  Clearly the vertices of $Q$ are of two types: vertices of the embedded copy of $P_1$ and vertices of the embedded copy of $P_2$.  Let $v,w$ be any two vertices of $P_1$.  Then we have
\begin{multline}\label{joineq1}
\left\langle \left(\sqrt{1+a^2}v; 0,\dots,0; a\right),W\left(\sqrt{1+a^2}w; 0,\dots,0; a\right)\right\rangle\\
\begin{aligned}
&= \left\langle \left(\sqrt{1+a^2}v; 0,\dots,0; a\right),\left(\sqrt{1+a^2}U_1w; 0,\dots,0;-a\right)\right\rangle\\
&= \left\langle \sqrt{1+a^2}v,\sqrt{1+a^2}U_1w\right\rangle-a^2\\
&= (1+a^2)\langle v,U_1w\rangle-a^2\\
&\leq(1+a^2)(1)-a^2\\
&=1
\end{aligned}
\end{multline}

On the other hand, let $v,w$ be any two vertices of $P_2$.  Then we have
\begin{multline}\label{joineq2}
\left\langle \left(0,\dots,0; \sqrt{1+\frac{1}{a^2}}v; -\frac{1}{a}\right),W\left(0,\dots,0; \sqrt{1+\frac{1}{a^2}}w; -\frac{1}{a}\right)\right\rangle\\
\begin{aligned}
&= \left\langle \left(0,\dots,0; \sqrt{1+\frac{1}{a^2}}v; -\frac{1}{a}\right),\left(0,\dots,0; \sqrt{1+\frac{1}{a^2}}U_2w; \frac{1}{a}\right)\right\rangle\\
&= \left\langle \sqrt{1+\frac{1}{a^2}}v,\sqrt{1+\frac{1}{a^2}}U_2w\right\rangle-\frac{1}{a^2}\\
&= \left(1+\frac{1}{a^2}\right)\langle v,U_2w\rangle-\frac{1}{a^2}\\
&\leq\left(1+\frac{1}{a^2}\right)(1)-\frac{1}{a^2}\\
&=1
\end{aligned}
\end{multline}

Finally, for a vertex $v$ of $P_1$ and a vertex $w$ of $P_2$, we have
\begin{multline}\label{joineq3}
\left\langle \left(\sqrt{1+\frac{1}{a^2}}v; 0,\dots,0; a\right),W\left(0,\dots,0; \sqrt{1+\frac{1}{a^2}}w; -\frac{1}{a}\right)\right\rangle\\
\begin{aligned}
&= \left\langle \left(\sqrt{1+\frac{1}{a^2}}v; 0,\dots,0; a\right),\left(0,\dots,0; \sqrt{1+\frac{1}{a^2}}U_2w; \frac{1}{a}\right)\right\rangle\\
&=a\cdot\frac{1}{a}\\
&=1
\end{aligned}
\end{multline}

Now for the second condition of Theorem \ref{selfpolarconditions}, we need to identify the facets of $Q$.  Since the dimension of each facet must be $d_1+d_2$, and every face of $Q$ is the join of a face from $P_1$ with a face from $P_2$, each facet of $Q$ must be either the join of $P_1$ with a facet of $P_2$, or the join of a facet of $P_1$ with $P_2$.

Let $F$ be a facet of $P_1$, and let $F'$ be the corresponding facet of $Q$ which is the join of $F$ with $P_2$.  Since $P_1$ is self-polar by $U_1$, there is some vertex $w$ of $P_1$ such that $\langle v,U_1w\rangle=1$ for all $v\in\vertop(F)$.  Then Equation \ref{joineq1} applies to all such $v$, with equality rather than inequality.  As for the vertices of $F'$ which come from $P_2$ rather than from $P_1$, Equation \ref{joineq3} applies to them.

Now for the other case, let $F$ be a facet of $P_2$, and let $F'$ be the corresponding facet of $Q$ which is the join of $F$ with $P_1$.  Since $P_2$ is self-polar by $U_2$, there is some vertex $w$ of $P_2$ such that $\langle v,U_2w\rangle=1$ for all $v\in\vertop(F)$.  Then Equation \ref{joineq2} applies to all such $v$, with equality rather than inequality.  As for the vertices of $F'$ which come from $P_1$ rather than from $P_2$, Equation \ref{joineq3} applies to them.
\end{proof}

\subsection{Sections and Projections}

Now that we have shown how to construct self-polar polytopes in higher dimensions from those in lower dimensions, it is natural to consider whether we can also construct self-polar polytopes in lower dimensions from those in higher dimensions.

First, we need to establish how the polar operation works on sections and projections.

\begin{lemma}\label{lowerdimlemma}
For a polytope $P\subseteq\Rd$ and a subspace $H$ of $\Rd$, the polar of the orthogonal projection of $P$ onto $H$ is the intersection of the polar of $P$ with $H$:
\[\left(\mathrm{proj}_H(P)\right)^\circ=P^\circ\cap H\]
\end{lemma}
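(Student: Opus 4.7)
The plan is a clean double-inclusion argument that rests on a single orthogonality identity. First I would clarify the convention: since $\mathrm{proj}_H(P) \subseteq H$, the polar on the left-hand side must be taken inside the subspace $H$, i.e.
\[(\mathrm{proj}_H(P))^\circ = \{x \in H : \langle x, y\rangle \leq 1 \textup{ for all } y \in \mathrm{proj}_H(P)\}.\]
Otherwise the equation would fail by an entire $H^\perp$-direction, since elements of $H^\perp$ pair trivially with elements of $H$.

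The crucial observation is that for any $x \in H$ and any $p \in \mathbb{R}^d$,
\[\langle x, p\rangle = \langle x, \mathrm{proj}_H(p)\rangle,\]
because $p - \mathrm{proj}_H(p) \in H^\perp$ and $x \in H$. Both containments then reduce to quoting this identity.

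For $(\subseteq)$, take $x \in (\mathrm{proj}_H(P))^\circ$. Then $x \in H$, and for any $p \in P$ we have $\mathrm{proj}_H(p) \in \mathrm{proj}_H(P)$, so $\langle x, p\rangle = \langle x, \mathrm{proj}_H(p)\rangle \leq 1$; hence $x \in P^\circ \cap H$. For $(\supseteq)$, take $x \in P^\circ \cap H$. Every $y \in \mathrm{proj}_H(P)$ has the form $y = \mathrm{proj}_H(p)$ for some $p \in P$, and then $\langle x, y\rangle = \langle x, \mathrm{proj}_H(p)\rangle = \langle x, p\rangle \leq 1$; hence $x \in (\mathrm{proj}_H(P))^\circ$.

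There is no real obstacle here; the whole argument is mechanical once the identity $\langle x, p\rangle = \langle x, \mathrm{proj}_H(p)\rangle$ is in place, and it never uses that $P$ is a polytope, only that projection and intersection are well-defined. The only subtle point is the convention issue noted above, which is why I would flag it at the outset rather than bury it at the end.
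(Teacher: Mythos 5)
Your proof is correct and rests on the same key identity as the paper's: that for $x \in H$, pairing against $p$ and against $\mathrm{proj}_H(p)$ give the same value (the paper phrases this via the projection matrix $AA^T$ and its symmetry, you phrase it via $p - \mathrm{proj}_H(p) \in H^\perp$). You present it as a double inclusion where the paper writes a single chain of set-builder equalities, but the underlying argument is the same, including the explicit note that the polar must be taken inside $H$.
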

\begin{proof}
Note that $\mathrm{proj}_H(P)$ is given by $AA^TP$, where the columns of $A$ are an orthonormal basis of $H$.  Note also that we are taking the polar in $H$ only, not in $\Rd$.  Since our projection is orthogonal, we have
\begin{align*}
\left(\mathrm{proj}_H(P)\right)^\circ
&= \{y\in H : \langle y, AA^Tx\rangle\leq 1 \textup{ for all } x\in P\}\\
&= \{y\in H : \langle AA^Ty, x\rangle\leq 1 \textup{ for all } x\in P\}\\
&= \{y\in H : \langle y, x\rangle\leq 1 \textup{ for all } x\in P\}\\
&= P^\circ\cap H
\end{align*}\end{proof}

Now we can state conditions for the existence of a lower-dimensional self-polar projection or cross-section.

\begin{theorem}\label{lowerdimthm}
For a polytope $P\in\Pzd$ that is self-polar by orthogonal transformation $U$, and given a subspace $H$ of $\Rd$, if $UH=H$ and $\mathrm{proj}_H(P)=P\cap H$, then $P\cap H$ is self-polar by the restriction of $U$ to $H$.
\end{theorem}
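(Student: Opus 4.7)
The plan is to chain together the hypothesis, the previous lemma on polars of projections, and the fact that $U$ (being an orthogonal bijection that preserves $H$ setwise) commutes with intersections against $H$. The main observation is that all three operations $P\mapsto P\cap H$, polarity (within $H$), and $U|_H$ play well with one another once the hypotheses are unpacked.

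First I would check that $U|_H$ is a sensible self-polarity candidate: because $UH=H$ and $U$ is an orthogonal transformation of $\Rd$, its restriction to $H$ is a linear bijection of $H$ onto itself that preserves the inner product inherited from $\Rd$, hence an orthogonal transformation of $H$. So it makes sense to ask whether $P\cap H = U|_H (P\cap H)^\circ$, with polarity taken inside $H$.

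Next, I would compute the polar of $P\cap H$ inside $H$. By hypothesis $\mathrm{proj}_H(P) = P\cap H$, so Lemma \ref{lowerdimlemma} gives
\[(P\cap H)^\circ = \left(\mathrm{proj}_H(P)\right)^\circ = P^\circ \cap H.\]
Applying $U|_H$ to both sides, and using the fact that the bijection $U$ satisfies $U(A\cap B) = UA \cap UB$ together with $UH = H$, I get
\[U|_H (P\cap H)^\circ = U(P^\circ \cap H) = UP^\circ \cap UH = UP^\circ \cap H.\]
Finally, since $P$ is self-polar by $U$, i.e.\ $P = UP^\circ$, the right-hand side equals $P\cap H$, establishing $P\cap H = U|_H (P\cap H)^\circ$.

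The calculations here are routine; the only subtle point, and the one place I would be careful, is making sure the polar in Lemma \ref{lowerdimlemma} is the polar \emph{within $H$} (as stated in that lemma) so that the claim we are proving matches up dimensionally, and checking that the hypothesis $\mathrm{proj}_H(P) = P\cap H$ (which forces $P\cap H$ to contain the origin in its relative interior in $H$) places $P\cap H$ in $\mathcal{P}^{\dim H}_0$ so that ``self-polar'' is meaningful for it. Once those ambient-space issues are pinned down, the proof is a one-line chain of equalities.
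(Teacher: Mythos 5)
Your proposal is correct and follows essentially the same chain of equalities as the paper's proof: apply Lemma \ref{lowerdimlemma} and the hypothesis to get $(P\cap H)^\circ = P^\circ\cap H$, then multiply by $U$ and use $UH=H$ and $P=UP^\circ$. Your added remarks on $U|_H$ being orthogonal on $H$ and on the ambient-space bookkeeping are sensible sanity checks but do not change the argument.
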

\begin{proof}
We know from the lemma that $\left(\mathrm{proj}_H(P)\right)^\circ=P^\circ\cap H$, so by assumption we have $(P\cap H)^\circ = P^\circ\cap H$.  Multiplying both sides by $U$, we get  $U(P\cap H)^\circ = UP^\circ\cap UH$.  Since $P=UP^\circ$ and $UH=H$, we obtain $U(P\cap H)^\circ = P\cap H$.
\end{proof}

Another consequence of Lemma \ref{lowerdimlemma} is that it gives us a condition for the existence of a self-polar pyramid for the negative transformation.

\begin{theorem} \label{negpyrbasethm}
If a pyramid $P\in\Pzd$ has a negatively self-polar realization, then so does the base of the pyramid.
\end{theorem}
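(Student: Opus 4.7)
The plan is to identify the apex of $P$, use negative self-polarity to locate the base in a hyperplane orthogonal to the apex direction, and then translate and rescale the base into that hyperplane to produce a negatively self-polar realization of the base's combinatorial type.

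First I would identify the apex. Let $v$ denote the unique vertex of $P$ not contained in its base facet $B_P$. Under $P=-P^\circ$, the vertex $v$ is paired with a facet $F$ of $P$ satisfying $\langle -v,u\rangle=1$ for all $u\in\vertop(F)$. Crucially, $v\notin F$, since $\langle -v,v\rangle=1$ would force $|v|^2=-1$; and since the apex of a pyramid lies in every facet except the base, $F$ must equal $B_P$. Hence $B_P$ lies in the affine hyperplane $\{x\in\Rd:\langle v,x\rangle=-1\}$, which is orthogonal to $v$.

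Next I would set $H=v^\perp$ and propose the candidate
\[B:=c\bigl(B_P+v/|v|^2\bigr)\subset H,\qquad c:=|v|/\sqrt{1+|v|^2}.\]
The translation brings $B_P$ into $H$, and the scaling is pinned down by the self-polar normalization in the next step. That $0$ lies in the relative interior of $B$ follows from $0\in\mathrm{int}(P)$: writing $0=tv+(1-t)b$ for some $t\in(0,1)$ and $b\in B_P$, the inner product with $v$ forces $b=-v/|v|^2$, which must itself lie in the relative interior of $B_P$, or else $0$ would lie on a facet of $P$ through $v$.

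The main step, and the place where real care is required, is to verify the two conditions of Theorem \ref{selfpolarconditions} for $B$ with the orthogonal map $-I|_H$. Writing each vertex of $B$ as $b_i=c(w_i+v/|v|^2)$ for a vertex $w_i$ of $B_P$ and expanding using $\langle v,w_i\rangle=-1$ yields the key identity
\[\langle -b_i,b_j\rangle=\tfrac{c^2}{|v|^2}+c^2\langle -w_i,w_j\rangle.\]
The self-polarity of $P$ supplies $\langle -w_i,w_j\rangle\leq 1$, with equality precisely when $w_j$ lies in the facet of $P$ dual to $w_i$; that dual facet automatically contains $v$ (since $\langle w_i,v\rangle=-1$ holds for any $w_i\in B_P$), so it must be of the form $\mathrm{conv}(\{v\}\cup G)$ for some facet $G$ of $B_P$. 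The choice $c=|v|/\sqrt{1+|v|^2}$ is the unique one satisfying $c^2(1+1/|v|^2)=1$; with it, the inequality becomes $\langle -b_i,b_j\rangle\leq 1$ (condition (1)), while the equality case becomes $\langle -b_i,b_j\rangle=1$ for $b_j$ corresponding to $w_j\in G$, producing a dual vertex $b_i$ for each facet of $B$ (condition (2)). The genuine obstacle is precisely this bookkeeping: a single scaling constant $c$ must normalize both sides of the self-polar inequality at once, and the displayed identity makes that reconciliation transparent.
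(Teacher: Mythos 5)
Your proof is correct and arrives at exactly the same negatively self-polar realization of the base that the paper does (the scaled-translate $\frac{|v|}{\sqrt{1+|v|^2}}\bigl(B_P + v/|v|^2\bigr)$), but it gets there by a genuinely different route. The paper invokes Lemma \ref{tangentlemma} to establish that the base is orthogonal to the apex (via the sphere-tangency correspondence between a vertex and its dual facet), then uses Lemma \ref{lowerdimlemma} to turn $\left(\mathrm{proj}_H(P)\right)^\circ = P^\circ \cap H$ into a relation $-(cS)^\circ = S$ for the cross-section $S = P \cap H$, from which it extracts the scaling. You instead read the orthogonality straight off the negative self-polarity: the apex $v$ must be dual to the base (since $v$ lies in every other facet and cannot lie in its own dual facet by Lemma \ref{dualautolemma}), so every base vertex $u$ satisfies $\langle v,u\rangle = -1$, placing $B_P$ in the hyperplane $\{\langle v,\cdot\rangle = -1\}\perp v$. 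You then construct the candidate realization explicitly and verify Theorem \ref{selfpolarconditions} directly via the identity $\langle -b_i, b_j\rangle = c^2\langle -w_i, w_j\rangle + c^2/|v|^2$, which cleanly explains why the single scaling constant $c = |v|/\sqrt{1+|v|^2}$ normalizes both the inequality and the equality cases simultaneously. Your route is more elementary — it avoids both the tangency lemma and the projection-polar lemma — and it exposes the arithmetic reason for the particular scaling, whereas the paper's route has the advantage of slotting naturally into the machinery of Section \ref{sec:higherandlower} on sections and projections.
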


\begin{proof}
Suppose $P=-P^\circ\in\Pzd$ is a pyramid over base $Q$.  Let $a\in\Rd$ be the apex of the pyramid.  From Lemma \ref{dualautolemma}, we know that the dual automorphism of the face lattice that is realized by the negative transformation must pair $a$ with $Q$.  Furthermore, from Lemma \ref{tangentlemma}, we know that $Q$ must be orthogonal to $a$.

Let $H$ be the subspace of $\Rd$ orthogonal to $a$.  We know $H=-H$ since this is true of any subspace.  From Lemma \ref{lowerdimlemma} we know that $\left(\mathrm{proj}_H(P)\right)^\circ = P^\circ\cap H$, where the polar is taken with respect to the space $H$.  Then, multiplying both sides by $-1$, we get
\begin{align*}
-\left(\mathrm{proj}_H(P)\right)^\circ &= -(P^\circ\cap H)\\
&= -P^\circ\cap -H\\
&= P\cap H
\end{align*}

Let $S$ be the intersection of $P$ with $H$.  Since the base of the pyramid, $Q$, is orthogonal to the apex, $a$, we can say that for some constant $c>0$, $\mathrm{proj}_H(P)=cS$.  Hence $-(cS)^\circ=S$.  Replacing $c$ with $\sqrt{c}\sqrt{c}$, we get $S = -(\sqrt{c}\sqrt{c})S)^\circ$ and applying Lemma \ref{polarofmatrixlemma}, we get $S = -\frac{1}{\sqrt{c}}(\sqrt{c}S)^\circ$, which yields $\sqrt{c}S=-(\sqrt{c}S)^\circ$.  Since $\sqrt{c}S$ is of the same combinatorial type as $Q$, we have a negatively self-polar realization of the base of the pyramid.
\end{proof}

\section{Vertex Numbers of Negatively Self-Polar Polytopes}\label{sec:vertexnumbers}

The theorems in the last section can be applied right away to the question of how many vertices are possible for negatively self-polar polytopes in dimensions higher than three.  Starting in three dimensions with the constructions given in Theorem \ref{d3types} and applying Theorem \ref{pyramidthm} repeatedly, we can construct negatively self-polar polytopes in higher dimensions with (almost) any number of vertices, with the only exception being $d+2$ vertices in dimension $d$.

The question of whether there exist any $d$-dimensional negatively self-polar polytopes with $d+2$ vertices is somewhat challenging, since there is no starting construction in $d=3$, due to Theorem \ref{negpyrbasethm}.  Luckily, polytopes having this number of vertices have already received a great deal of study; see \cite{grunbaum} or \cite{ziegler}, for example.

\begin{theorem}
For $d\geq 3$, there exist negatively self-polar $d$-dimensional polytopes with $n$ vertices for all values of $n\geq d+1$ except $n=d+2$.
\end{theorem}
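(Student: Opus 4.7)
The plan is to split the argument into three regimes for $n$: the simplex case $n = d+1$, the ``generic'' range $n \geq d+3$ handled by iterated pyramids from dimension three, and the problematic value $n = d+2$ which must be ruled out. A clean observation making the first two cases uniform is that applying the pyramid construction of Theorem \ref{pyramidthm} with base transformation $U = -I_d$ yields the block matrix $W = \operatorname{diag}(-I_d, -1) = -I_{d+1}$. So pyramidal lifts preserve negative self-polarity, and each lift adds exactly one vertex and one dimension.

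For $n = d+1$, I would observe that the equilateral triangle is negatively self-polar (the corollary to Theorem \ref{polygonthm}, since $3$ is odd) and iterate pyramids to obtain a negatively self-polar $d$-simplex for every $d \geq 2$. For $n \geq d+3$, I would pick a negatively self-polar 3-polytope with $n - (d-3) = n - d + 3$ vertices, which exists by Theorem \ref{d3types} because the hypothesis $n \geq d+3$ is equivalent to $n-d+3 \geq 6$, and then take pyramids $d - 3$ times to arrive at a $d$-polytope with $n$ vertices.

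For the value $n = d+2$, non-existence should come from combining the Gale-diagram classification of $(d+2)$-vertex polytopes with Theorems \ref{negpyrbasethm} and \ref{evenpolygonthm}. A $d$-polytope with $d+2$ vertices has a one-dimensional Gale diagram; letting $p$, $q$, $r$ denote the number of positive, negative, and zero Gale values (with $p, q \geq 2$ and $p+q+r = d+2$), the standard face-counting via the Gale transform yields $pq + r$ facets: each opposite-sign pair gives a facet of cardinality $d$, and each zero entry gives a facet of cardinality $d+1$. Self-duality demands $pq + r = d+2 = p+q+r$, forcing $(p-1)(q-1) = 1$ and hence $p = q = 2$. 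Consequently, every self-dual $d$-polytope with $d+2$ vertices is combinatorially a $(d-2)$-fold pyramid over a quadrilateral. Applying Theorem \ref{negpyrbasethm} iteratively $d-2$ times reduces negative self-polarity of the whole polytope to negative self-polarity of the innermost quadrilateral, which is impossible by Theorem \ref{evenpolygonthm}.

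The main obstacle is the Gale-diagram classification step: one needs the facet count in terms of the sign pattern and the fact that the combinatorial type of a $d$-polytope with $d+2$ vertices is determined by that sign pattern. These are standard consequences of Gale transform theory, for which I would cite \cite{grunbaum} and \cite{ziegler}, but they must be invoked precisely enough that the implication ``self-dual $\Rightarrow$ $(d-2)$-fold pyramid over a quadrilateral'' is unambiguous. Once that structural fact is in hand, the remainder is assembly of tools already developed in Sections \ref{sec:lowdims} and \ref{sec:higherandlower}.
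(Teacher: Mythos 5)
Your proof is correct, and for the problematic value $n = d+2$ you take a genuinely different route from the paper. The paper invokes the classification that $(d+2)$-vertex polytopes are either simplicial or multiple pyramids over simplicial polytopes, observes that a self-dual simplicial polytope would also be simple and hence a simplex, and then (via Theorem \ref{negpyrbasethm}) pushes this down to the simplicial base of the pyramid. Your Gale-diagram argument instead pins down the \emph{exact} self-dual combinatorial type: $f_0 = f_{d-1}$ gives $pq + r = p+q+r$, hence $(p-1)(q-1)=1$ and $p=q=2$, so the only candidate is the $(d-2)$-fold pyramid over a quadrilateral, which Theorems \ref{negpyrbasethm} and \ref{evenpolygonthm} then exclude. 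Your version is somewhat more precise and, in fact, sidesteps a subtlety in the paper's wording: the paper's step ``simplicial $+$ simple $\Rightarrow$ simplex'' is the standard fact for dimension $\geq 3$, but the simplicial base at the bottom of the pyramid tower is two-dimensional (a quadrilateral, which is simplicial and simple but not a simplex), so strictly speaking the paper also has to fall back on Theorem \ref{evenpolygonthm} for that base case; your formulation makes this last step explicit rather than implicit. For $n = d+1$ and $n \geq d+3$ the two arguments coincide (iterated pyramids over the Section \ref{sec:lowdims} constructions, noting $W = -I_{d+1}$), with the minor cosmetic difference that you seed the simplex chain at $d=2$ rather than $d=3$.
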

\begin{proof}
For number of vertices $n=d+1$ and $n\geq d+3$, the base case of $d=3$ has already been shown, and the higher-dimensional polytopes can be constructed inductively using the information in the proofs of Theorems \ref{d3types} and \ref{pyramidthm}.

As for $n=d+2$, polytopes with this number of vertices are either simplicial, or are (multiple) pyramids over simplicial polytopes (see \cite{grunbaum} or \cite{ziegler}, for example). A simplicial polytope with $n=d+2$ cannot be self-dual because it would have to be both simplicial and simple, which is only true of simplices, which have $n=d+1$.

As for (multiple) pyramids over simplicial polytopes, we have already shown in Theorem \ref{negpyrbasethm} that a negatively self-polar pyramid must have a negatively self-polar base.  Hence, for a (multiple) pyramid over a simplicial base to be negatively self-polar, the simplicial base would have to be negatively self-polar in some lower dimension.  But then the base would have to be a simplex, and so the entire pyramid would have to be a simplex.
\end{proof}

For $d>3$, it is an open question whether self-dual $d$-dimensional polytopes with $d+2$ vertices are self-polar via some other orthogonal transformation.

\section{Modifications in the Same Dimension}\label{sec:modifications}

\subsection{Intermediate Construction}
The following theorem and its proof give a method for constructing self-polar polytopes for a chosen orthogonal transformation, using a series of sets contained in the transforms of their polar sets.  The theorem also establishes when a self-polar polytope exists under certain circumstances.

As the theorem statement and its proof are rather complicated, we will first look at an example in depth.

\begin{example}\label{intermediateexample}
To keep things simple, we will work in $\mathbb{R}^2$.  Let $P$ be the triangle with vertices at $(\frac{1}{2},\frac{1}{2}),(0,-1),$ and $(-1,0)$.  Then $-P^\circ$ is a triangle with vertices at $(1,1),(1,-3),$ and $(-3,1)$.  Both are shown in the left panel of Figure \ref{intermediate1}.  Our goal is to construct a polytope $Q=-Q^\circ$ that is between $P$ and $-P^\circ$ in the sense that $P\subset Q=-Q^\circ \subset -P^\circ$.

\begin{figure}
\begin{minipage}[c]{0.5\textwidth}
\centering
\begin{tikzpicture}[x=0.85cm,y=0.85cm]
\draw[fill=gray!30] (-3,1) -- (1,1) -- (1,-3) -- cycle;
\draw[fill=gray] (1/2,1/2) -- (0,-1) -- (-1,0) -- cycle;
\draw[gray!60] (1,2) -- (1,-3);
\draw[gray!60] (2,2) -- (2,-3);
\draw[gray!60] (-1,2) -- (-1,-3);
\draw[gray!60] (-2,2) -- (-2,-3);
\draw[gray!60] (-3,2) -- (-3,-3);
\draw[gray!60] (2,1) -- (-3,1);
\draw[gray!60] (2,2) -- (-3,2);
\draw[gray!60] (2,-1) -- (-3,-1);
\draw[gray!60] (2,-2) -- (-3,-2);
\draw[gray!60] (2,-3) -- (-3,-3);
\draw[<->] (-3,0) -- (2,0) node[right] {$x$};
\draw[<->] (0,-3) -- (0,2) node[above] {$y$};
\node[below left] at (0,0) {$P$};
\node[above right] at (-2,0) {$-P^\circ$};
\draw (-3,1) -- (1,1) -- (1,-3) -- cycle;
\draw (1/2,1/2) -- (0,-1) -- (-1,0) -- cycle;
\end{tikzpicture}
\end{minipage}
\begin{minipage}[c]{0.5\textwidth}
\centering
\begin{tikzpicture}[x=0.85cm,y=0.85cm]
\draw[<->] (-3,0) -- (2,0) node[right] {$x$};
\draw[<->] (0,-3) -- (0,2) node[above] {$y$};
\node[below left] at (1,1) {$S_1$};
\node[below right] at (-1,1) {$S_2$};
\node[above right] at (-1,-1) {$S_3$};
\node[above left] at (1,-1) {$S_4$};
\end{tikzpicture}
\end{minipage}%
\caption{On the left, $P$ and $-P^\circ$; on the right, the auxiliary sets $S_1,S_2,S_3,S_4$.}\label{intermediate1}\end{figure}
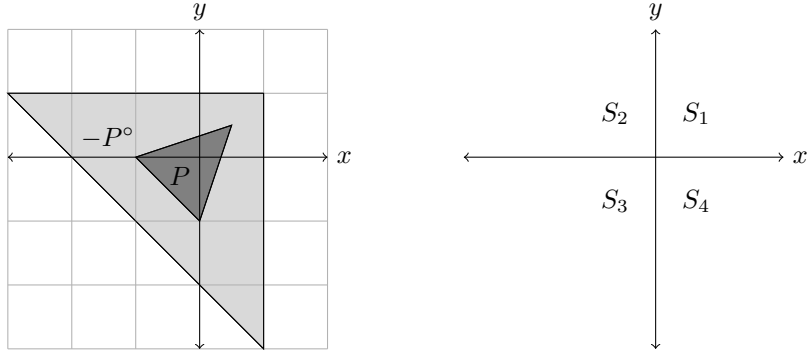

The general strategy is to enlarge $P$ while still ensuring that it remains a subset of $-P^\circ$.  To this end, we will use four auxiliary sets, $S_1,S_2,S_3,S_4$.  Each auxiliary set is one of the four quadrants, as shown in Figure the right panel of \ref{intermediate1}.  These sets are convenient because each $S_i=-S_i^\circ$.

We will define $P_0:=P$ and then successively create enlarged versions of $P$ by letting $P_1=[(P_0\cup S_1)\cap -P_0^\circ]$, $P_2=[(P_1\cup S_2)\cap -P_1^\circ]$, and so on.  The results for $P_1$ are shown in Figure \ref{intermediate2}, and the results for $P_2$ are shown in Figure \ref{intermediate3}.

As it turns out, we already have our desired goal for $Q=P_2$, so $S_3$ and $S_4$ are unnecessary.  Depending on which of the auxiliary sets we used, however, and in which order, we could obtain several different suitable sets to fill the role of $Q$.

\begin{figure}
\begin{minipage}[c]{0.33\textwidth}
\centering
\begin{tikzpicture}[x=0.75cm,y=0.75cm]
\draw[fill=gray!30] (-3,1) -- (1,1) -- (1,-3) -- cycle;
\draw[fill=gray] (1/2,1/2) -- (0,-1) -- (-1,0) -- cycle;
\draw[gray!60] (1,2) -- (1,-3);
\draw[gray!60] (2,2) -- (2,-3);
\draw[gray!60] (-1,2) -- (-1,-3);
\draw[gray!60] (-2,2) -- (-2,-3);
\draw[gray!60] (-3,2) -- (-3,-3);
\draw[gray!60] (2,1) -- (-3,1);
\draw[gray!60] (2,2) -- (-3,2);
\draw[gray!60] (2,-1) -- (-3,-1);
\draw[gray!60] (2,-2) -- (-3,-2);
\draw[gray!60] (2,-3) -- (-3,-3);
\draw[<->] (-3,0) -- (2,0) node[right] {$x$};
\draw[<->] (0,-3) -- (0,2) node[above] {$y$};
\node[below left] at (0,0) {$P_0$};
\node[above] at (0.5,-2) {$-P_0^\circ$};
\draw (-3,1) -- (1,1) -- (1,-3) -- cycle;
\draw (1/2,1/2) -- (0,-1) -- (-1,0) -- cycle;
\draw[red] (2,0) -- (0,0) -- (0,2);
\fill[pattern=north west lines, pattern color=red] (2,0) -- (0,0) -- (0,2) -- (2,2) -- cycle;
\node[above, red] at (1,2) {$S_1$};
\end{tikzpicture}
\end{minipage}
\begin{minipage}[c]{0.33\textwidth}
\centering
\begin{tikzpicture}[x=0.75cm,y=0.75cm]
\draw[fill=gray!30] (-3,1) -- (1,1) -- (1,-3) -- cycle;
\draw[fill=gray] (-1,0) -- (0,-1) -- (1/3,0) -- (1,0) -- (1,1) -- (0,1) -- (0,1/3) -- cycle;
\draw[gray!60] (1,2) -- (1,-3);
\draw[gray!60] (2,2) -- (2,-3);
\draw[gray!60] (-1,2) -- (-1,-3);
\draw[gray!60] (-2,2) -- (-2,-3);
\draw[gray!60] (-3,2) -- (-3,-3);
\draw[gray!60] (2,1) -- (-3,1);
\draw[gray!60] (2,2) -- (-3,2);
\draw[gray!60] (2,-1) -- (-3,-1);
\draw[gray!60] (2,-2) -- (-3,-2);
\draw[gray!60] (2,-3) -- (-3,-3);
\draw[<->] (-3,0) -- (2,0) node[right] {$x$};
\draw[<->] (0,-3) -- (0,2) node[above] {$y$};
\node[align=center] at (1,1) {\small $(P_0\cup S_1)$\\$\cap\,-P_0^\circ$};
\node[above] at (0.5,-2) {$-P_0^\circ$};
\draw (-3,1) -- (1,1) -- (1,-3) -- cycle;
\draw (-1,0) -- (0,-1) -- (1/3,0) -- (1,0) -- (1,1) -- (0,1) -- (0,1/3) -- cycle;
\end{tikzpicture}
\end{minipage}%
\begin{minipage}[c]{0.33\textwidth}
\centering
\begin{tikzpicture}[x=0.75cm,y=0.75cm]
\draw[fill=gray!30] (1,1) -- (-1,1) -- (-1,0) -- (0,-1) -- (1,-1) -- cycle;
\draw[fill=gray] (-1,0) -- (0,-1) -- (1,0) -- (1,1) -- (0,1) -- cycle;
\draw[gray!60] (1,2) -- (1,-3);
\draw[gray!60] (2,2) -- (2,-3);
\draw[gray!60] (-1,2) -- (-1,-3);
\draw[gray!60] (-2,2) -- (-2,-3);
\draw[gray!60] (-3,2) -- (-3,-3);
\draw[gray!60] (2,1) -- (-3,1);
\draw[gray!60] (2,2) -- (-3,2);
\draw[gray!60] (2,-1) -- (-3,-1);
\draw[gray!60] (2,-2) -- (-3,-2);
\draw[gray!60] (2,-3) -- (-3,-3);
\draw[<->] (-3,0) -- (2,0) node[right] {$x$};
\draw[<->] (0,-3) -- (0,2) node[above] {$y$};
\node[above right] at (0,0) {$P_1$};
\node[below] at (0.5,-1) {$-P_1^\circ$};
\draw (1,1) -- (-1,1) -- (-1,0) -- (0,-1) -- (1,-1) -- cycle;
\draw (-1,0) -- (0,-1) -- (1,0) -- (1,1) -- (0,1) -- cycle;
\end{tikzpicture}
\end{minipage}
\caption{The process of constructing $P_1=[(P_0\cup S_1)\cap -P_0^\circ]$ and $-P_1^\circ$.}\label{intermediate2}\end{figure}
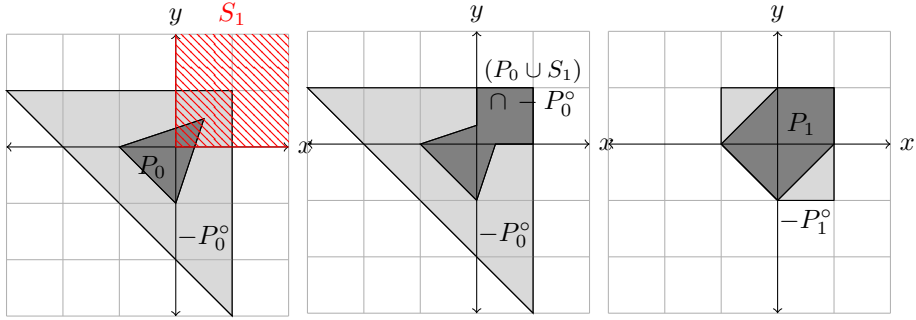

\begin{figure}
\begin{minipage}[c]{0.33\textwidth}
\centering
\begin{tikzpicture}[x=0.75cm,y=0.75cm]
\draw[fill=gray!30] (1,1) -- (-1,1) -- (-1,0) -- (0,-1) -- (1,-1) -- cycle;
\draw[fill=gray] (-1,0) -- (0,-1) -- (1,0) -- (1,1) -- (0,1) -- cycle;
\draw[gray!60] (1,2) -- (1,-3);
\draw[gray!60] (2,2) -- (2,-3);
\draw[gray!60] (-1,2) -- (-1,-3);
\draw[gray!60] (-2,2) -- (-2,-3);
\draw[gray!60] (-3,2) -- (-3,-3);
\draw[gray!60] (2,1) -- (-3,1);
\draw[gray!60] (2,2) -- (-3,2);
\draw[gray!60] (2,-1) -- (-3,-1);
\draw[gray!60] (2,-2) -- (-3,-2);
\draw[gray!60] (2,-3) -- (-3,-3);
\draw[<->] (-3,0) -- (2,0) node[right] {$x$};
\draw[<->] (0,-3) -- (0,2) node[above] {$y$};
\node[above right] at (0,0) {$P_1$};
\node[below] at (0.5,-1) {$-P_1^\circ$};
\draw (1,1) -- (-1,1) -- (-1,0) -- (0,-1) -- (1,-1) -- cycle;
\draw (-1,0) -- (0,-1) -- (1,0) -- (1,1) -- (0,1) -- cycle;
\draw[red] (-3,0) -- (0,0) -- (0,2);
\fill[pattern=north west lines, pattern color=red] (-3,0) -- (0,0) -- (0,2) -- (-3,2) -- cycle;
\node[above, red] at (-2,2) {$S_2$};
\end{tikzpicture}
\end{minipage}
\begin{minipage}[c]{0.33\textwidth}
\centering
\begin{tikzpicture}[x=0.75cm,y=0.75cm]
\draw[fill=gray!30] (1,1) -- (-1,1) -- (-1,0) -- (0,-1) -- (1,-1) -- cycle;
\draw[fill=gray] (-1,1) -- (-1,0) --  (0,-1) -- (1,0) -- (1,1) -- (0,1) -- cycle;
\draw[gray!60] (1,2) -- (1,-3);
\draw[gray!60] (2,2) -- (2,-3);
\draw[gray!60] (-1,2) -- (-1,-3);
\draw[gray!60] (-2,2) -- (-2,-3);
\draw[gray!60] (-3,2) -- (-3,-3);
\draw[gray!60] (2,1) -- (-3,1);
\draw[gray!60] (2,2) -- (-3,2);
\draw[gray!60] (2,-1) -- (-3,-1);
\draw[gray!60] (2,-2) -- (-3,-2);
\draw[gray!60] (2,-3) -- (-3,-3);
\draw[<->] (-3,0) -- (2,0) node[right] {$x$};
\draw[<->] (0,-3) -- (0,2) node[above] {$y$};
\node[above,align=center] at (0,-0.3) {\small $(P_1\cup S_2)$\\$\cap\, -P_1^\circ$};
\node[below] at (0.5,-1) {$-P_1^\circ$};
\draw (1,1) -- (-1,1) -- (-1,0) -- (0,-1) -- (1,-1) -- cycle;
\draw (-1,1) --(-1,0)-- (0,-1) -- (1,0) -- (1,1) -- (0,1) -- cycle;
\end{tikzpicture}
\end{minipage}%
\begin{minipage}[c]{0.33\textwidth}
\centering
\begin{tikzpicture}[x=0.75cm,y=0.75cm]
\draw[fill=gray] (-1,1) -- (-1,0) --  (0,-1) -- (1,0) -- (1,1) -- (0,1) -- cycle;
\draw[gray!60] (1,2) -- (1,-3);
\draw[gray!60] (2,2) -- (2,-3);
\draw[gray!60] (-1,2) -- (-1,-3);
\draw[gray!60] (-2,2) -- (-2,-3);
\draw[gray!60] (-3,2) -- (-3,-3);
\draw[gray!60] (2,1) -- (-3,1);
\draw[gray!60] (2,2) -- (-3,2);
\draw[gray!60] (2,-1) -- (-3,-1);
\draw[gray!60] (2,-2) -- (-3,-2);
\draw[gray!60] (2,-3) -- (-3,-3);
\draw[<->] (-3,0) -- (2,0) node[right] {$x$};
\draw[<->] (0,-3) -- (0,2) node[above] {$y$};
\node[above] at (0,0) {$P_2=-P_2^\circ$};
\draw (-1,1) --(-1,0)-- (0,-1) -- (1,0) -- (1,1) -- (0,1) -- cycle;
\end{tikzpicture}
\end{minipage}
\caption{The process of constructing $P_2=[(P_1\cup S_2)\cap -P_1^\circ]$ and $-P_2^\circ$.}\label{intermediate3}\end{figure}
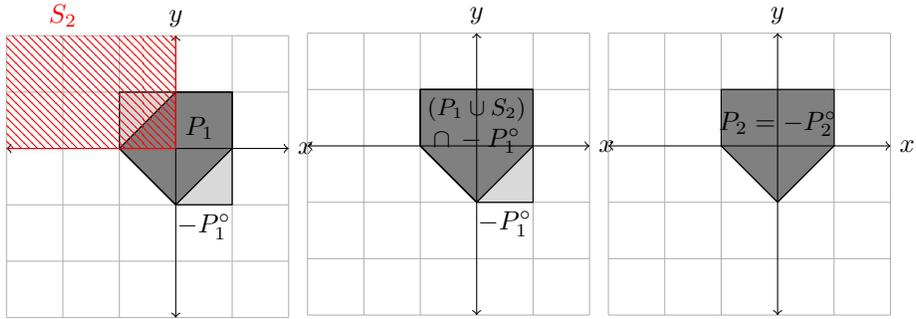

\end{example}

\begin{theorem}
\label{intermediate}
For $U$, an orthogonal transformation of $\Rd$, and a set $P\subset\Rd$, there exists a (polyhedral) set $Q$ such that $P\subseteq Q=UQ^\circ\subseteq UP^\circ$ if and only if all the following:
\begin{enumerate}
   \item There exists a closed, convex (polyhedral) set $R\subset\Rd$ such that
\begin{enumerate}
\item $U^2 R=R$
\item $P\subseteq R\subseteq UR^\circ\subseteq UP^\circ$
\end{enumerate}
   \item There exists a collection $S_1, S_2, ..., S_n$ of closed, convex (polyhedral) sets in $\Rd$ such that
\begin{enumerate}
\item $U^2 S_i=S_i$ for all $i=1, 2, ..., n$
\item $S_i\subseteq US_i^\circ$ for all $i=1, 2, ..., n$
\item $UR^\circ\subseteq S_1\cup S_2\cup\cdots\cup S_n$
\end{enumerate}
\end{enumerate}

\end{theorem}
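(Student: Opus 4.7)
For ($\Rightarrow$), assume $Q$ with $P\subseteq Q=UQ^\circ\subseteq UP^\circ$ exists. Take $R:=Q$ along with $n:=1$ and $S_1:=Q$. Theorem \ref{Uconditionstheorem} gives $U^2Q=Q$, so conditions 1(a) and 2(a) hold, and 1(b), 2(b), 2(c) reduce directly to the three containments $P\subseteq Q$, $Q\subseteq UQ^\circ$ (here an equality), and $UQ^\circ\subseteq UP^\circ$, all immediate from the hypothesis.

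For ($\Leftarrow$), the plan is to iteratively enlarge $R$ using the $S_i$'s in the manner of Example \ref{intermediateexample}. Set $P_0:=R$ and
\[ P_i := \bigl[(P_{i-1}\cup S_i)\cap UP_{i-1}^\circ\bigr],\qquad i=1,\ldots,n, \]
and finally $Q:=P_n$. By induction on $i$ I would prove three invariants: (i) $U^2P_i=P_i$; (ii) $P_{i-1}\subseteq P_i\subseteq UP_i^\circ$; and (iii) $P_i\subseteq UR^\circ$. Invariant (i) follows because the orthogonal map $U$ commutes with polar closure, intersection, and union, together with the identity $U^2P_{i-1}^\circ=P_{i-1}^\circ$ (obtained by applying Lemma \ref{polarofmatrixlemma} to the inductive hypothesis $U^2P_{i-1}=P_{i-1}$). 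Invariant (iii) follows because $R\subseteq P_{i-1}$ gives $UP_{i-1}^\circ\subseteq UR^\circ$, so the set inside the polar closure defining $P_i$ already lies in $UR^\circ$, which is closed, convex, and contains the origin. For the delicate right-hand inclusion in (ii), I would compute, via Lemma \ref{polarproperties}(2),(3),(6),(7) and Lemma \ref{polarofmatrixlemma},
\[ UP_i^\circ = \bigl[(UP_{i-1}^\circ\cap US_i^\circ)\cup P_{i-1}\bigr], \]
and, distributing intersection over union and using $P_{i-1}\subseteq UP_{i-1}^\circ$, rewrite
\[ P_i = \bigl[P_{i-1}\cup(S_i\cap UP_{i-1}^\circ)\bigr]. \]
Comparing the two expressions term by term, the containment $P_i\subseteq UP_i^\circ$ reduces by monotonicity of $[\cdot]$ to the hypothesis $S_i\subseteq US_i^\circ$ from condition 2(b).

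To upgrade $P_n\subseteq UP_n^\circ$ to equality, I would pick $x\in UP_n^\circ$. Since $P_n\supseteq R$, we have $UP_n^\circ\subseteq UR^\circ\subseteq S_1\cup\cdots\cup S_n$ by condition 2(c), so $x\in S_j$ for some $j$; and since $P_n\supseteq P_{j-1}$, monotonicity gives $x\in UP_{j-1}^\circ$, whence $x\in S_j\cap UP_{j-1}^\circ\subseteq P_j\subseteq P_n$. Thus $UP_n^\circ\subseteq P_n$, and together with (ii) this yields $Q=P_n=UP_n^\circ$; the chain $P\subseteq R=P_0\subseteq P_n=Q\subseteq UR^\circ\subseteq UP^\circ$ then closes out the argument. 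The main obstacle I anticipate is the symbolic juggling needed to derive the formula for $UP_i^\circ$: Lemma \ref{polarproperties} must be invoked in exactly the right order (polar of polar-closure, polar of intersection, polar of union, and double polar) and combined with the commutation of the orthogonal $U$ with each of $[\cdot]$, $\cap$, and $\cup$. The polyhedral version is a secondary routine matter: provided $R$ has the origin in its interior, each $UP_{i-1}^\circ$ is a bounded polytope, so $S_i\cap UP_{i-1}^\circ$ is a bounded polyhedron, and its polar closure with $P_{i-1}$ remains a polytope.
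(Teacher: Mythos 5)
Your proposal is correct and uses essentially the same iterative construction as the paper: setting $P_0:=R$, $P_i:=[(P_{i-1}\cup S_i)\cap UP_{i-1}^\circ]$, and establishing the same invariants ($U^2P_i=P_i$, $P_{i-1}\subseteq P_i\subseteq UP_i^\circ\subseteq UR^\circ$). The only organizational difference is at the end: the paper proves a standalone identity $UT^\circ\cap S = T\cap S$ at each step and combines it with nesting, whereas you close by directly tracing a generic point $x\in UP_n^\circ$ through some $S_j$ into $P_j\subseteq P_n$ --- a slightly more streamlined phrasing of the same argument.
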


\begin{proof}

For the first direction, assume all the conditions of the theorem.

Let $S$ be one of the $S_1, S_2, ..., S_n$ and let $T=[(R\cup S)\cap UR^\circ]$.  Note that if $S$ and $R$ are polyhedral, then $T$ is also polyhedral.  Also note that any orthogonal transformation commutes with the polar operation, with the convex hull operation, and with the closure operation.

Regarding $T$, we firstly find, using our assumptions and the commutativity of $U$, that
\begin{align*}
U^2 T &=U^2[(R\cup S)\cap UR^\circ] \\
&=[(U^2 R\cup U^2 S)\cap U^3 R^\circ] \\
&=[(R\cup S)\cap UR^\circ] \\
&=T
\end{align*}

Our next finding regarding $T$ is that $T\subseteq UT^\circ$, as shown below.  We use the theorem assumptions, the fact that $[\;]$ is a closure operator, and Lemma \ref{polarproperties} throughout, as well as basic properties of sets, and the fact that $R\subseteq UR^\circ$ is equivalent to $UR\subseteq R^\circ$ by taking the polar of both sides of the subset relation.
\begin{align*}
UT^\circ &=U[(R\cup S)\cap UR^\circ]^\circ \\
&=[(UR\cup US)\cap U^2 R^\circ]^\circ \\
&=[(UR \cup US)\cap R^\circ]^\circ \\
&=((UR \cup US)\cap R^\circ)^\circ \\
&=((UR\cap R^\circ) \cup (US\cap R^\circ))^\circ \\
&=(UR \cup (US\cap R^\circ))^\circ \\
&=UR^\circ \cap (US\cap R^\circ)^\circ \\
&=UR^\circ \cap (US^{\circ\circ}\cap R^\circ)^\circ \\
&=UR^\circ \cap (US^\circ\cup R)^{\circ\circ} \\
&=UR^\circ \cap [US^\circ\cup R] \\
&\supseteq UR^\circ \cap [S\cup R] \\
&\supseteq [UR^\circ \cap (S\cup R)] \\
&=T
\end{align*}

Regarding $T$, we lastly find that
\begin{align*}
UT^\circ\cap S &=[R\cup S]\cap UR^\circ\cap S \\
&=([R\cup S]\cap S)\cap UR^\circ \\
&=S\cap UR^\circ \\
&=S\cap UR^\circ\cap S \\
&\subseteq(R\cup S)\cap UR^\circ\cap S \\
&\subseteq T\cap S
\end{align*}
Together with $T\subseteq UT^\circ$, this implies $UT^\circ\cap S = T\cap S$.

To summarize, we have discovered $T$ such that $R\subseteq T\subseteq UT^\circ\subseteq UR^\circ$, where $T$ and $UT^\circ$ have equality on $S$, and $T$ fulfills the same assumptions that we initially made about $R$.  Hence, we proceed iteratively, using $T$ as the new $R$, and constructing a new $T$ using the same formula, but using a different $S$ to construct $T$ so as to extend the equality of $T$ and $UT^\circ$ to a new region.  Below, we describe this iterative process formally.

Let $T_0:=R$.  Then for $i\in\mathbb{N}$, let $T_i:=[(T_{i-1}\cup S_i)\cap UT_{i-1}^\circ]$.  It then follows from our previous finding about $T$ that $T_{i-1}\subseteq T_i\subseteq UT_i^\circ\subseteq UT_{i-1}^\circ$.  Hence
\begin{equation*}
R=T_0\subseteq T_1\subseteq\cdots\subseteq T_n\subseteq UT_n^\circ\subseteq\cdots\subseteq UT_1^\circ\subseteq UT_0^\circ=UR^\circ
\end{equation*}

From our third finding about $T$, we have that $T_i\cap S_i=UT_i^\circ\cap S_i$.  Then by the nesting of the sets it follows that $T_n\cap S_i=UT_n^\circ\cap S_i$ for all $i=1, 2, ..., n$.  Hence $T_n\cap (S_1\cup S_2\cup\cdots\cup S_n)=UT_n^\circ\cap (S_1\cup S_2\cup\cdots\cup S_n)$.  Since $T_n\subseteq UT^\circ_n\subseteq UR^\circ$ and by assumption this is a subset of $S_1\cup S_2\cup\cdots\cup S_n$, we have $T_n=UT^\circ_n$.

Note that so long as $R$ and the $S_1,...,S_n$ are polyhedral (i.e., finite intersections of closed half-spaces), then $T_n$ is polyhedral because it is constructed in a finite number of steps from intersections, unions, convex hulls, and closures of polyhedral sets.  This completes the proof of the first direction of the theorem.

For the second direction, assume $U$ is an orthogonal transformation of $\Rd$, $P$ is a set in $\Rd$, and assume there exists a (polyhedral) set $Q$ such that $P\subseteq Q=UQ^\circ\subseteq UP^\circ$.  We will define $R:=Q$ and $S_1:=Q$.  $S_1$ is the only $S_i$ we will need.

Since $Q=UQ^\circ$, we have $U^2 Q=Q$ from Theorem \ref{Uconditionstheorem}.  This gives us 1(a) and 2(a).  We get 1(b) and 2(b) from the assumption that $P\subseteq Q=UQ^\circ\subseteq UP^\circ$.  Finally, since $UQ^\circ=Q$, we have $UR^\circ=S_1$, which gives us 2(c).
\end{proof}

Some of the requirements of Theorem \ref{intermediate} might seem rather difficult to fulfill, and while this is true in general, when $U$ is the negative transformation, matters are simpler, as shown by the following corollary of Theorem \ref{intermediate}.

\begin{corollary}
\label{intermediatecorollary}
For a (polyhedral) set $P\subset\Rd$ such that $P\subseteq -P^\circ$, there exists a (polyhedral) set $Q$ such that $P\subseteq Q=-Q^\circ\subseteq -P^\circ$.
\end{corollary}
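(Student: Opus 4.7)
The plan is to deduce this immediately from Theorem \ref{intermediate} by exhibiting suitable choices of $R$ and $S_1,\dots,S_n$ for the special case $U = -I$. Since $U^2 = I$, conditions $1(a)$ and $2(a)$ of that theorem are automatically satisfied by \emph{any} sets $R$ and $S_i$, which is what makes the negative transformation dramatically easier to handle than a general orthogonal $U$.

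First I would take $R := P$. The hypothesis $P \subseteq -P^\circ$ immediately yields the chain
\[
P \;\subseteq\; R \;=\; P \;\subseteq\; -P^\circ \;=\; UR^\circ \;=\; UP^\circ,
\]
so condition $1(b)$ holds. Polyhedrality of $R$ is inherited from $P$.

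Next I would take the $S_i$ to be the $2^d$ closed coordinate orthants of $\mathbb{R}^d$, that is, the sets $S_\varepsilon := \{x \in \mathbb{R}^d : \varepsilon_i x_i \ge 0 \text{ for all } i\}$ indexed by sign vectors $\varepsilon \in \{-1,+1\}^d$. Each $S_\varepsilon$ is polyhedral. A short direct computation from the definition of the polar shows that $S_\varepsilon^\circ$ is the opposite orthant $S_{-\varepsilon}$, hence $-S_\varepsilon^\circ = S_\varepsilon$, giving condition $2(b)$ with equality. Since the orthants tile $\mathbb{R}^d$, we have $UR^\circ = -P^\circ \subseteq \mathbb{R}^d = \bigcup_\varepsilon S_\varepsilon$, giving condition $2(c)$.

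All hypotheses of Theorem \ref{intermediate} are therefore in place, and applying it produces a polyhedral $Q$ with $P \subseteq Q = -Q^\circ \subseteq -P^\circ$, as required. There is no real obstacle here: the only content beyond invoking the theorem is spotting that the coordinate orthants simultaneously cover the ambient space and are individually fixed by $-(\cdot)^\circ$, which is precisely what collapses the otherwise delicate conditions $2(a)$--$(c)$ into triviality when $U = -I$.
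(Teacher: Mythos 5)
Your proof is correct and takes exactly the same route as the paper's: set $R = P$ so that condition 1(b) is just the hypothesis, observe that $U^2 = I$ trivially gives 1(a) and 2(a), and take the $S_i$ to be the $2^d$ coordinate orthants, each of which satisfies $S_i = -S_i^\circ$ and which together cover $\Rd$. You supply a bit more detail (the sign-vector indexing and the verification that $S_\varepsilon^\circ = S_{-\varepsilon}$) than the paper's terse argument, but the approach is identical.
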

\begin{proof}
Here, we use Theorem \ref{intermediate} and let $R$ be $P$ itself.  Obviously $U^2=I$, so we have fulfilled conditions 1(a) and 1(b).  As for the auxiliary sets $S_i$, we can use the orthants of $\Rd$, which are polyhedral.  Each orthant $S_i=-S_i^\circ$, and together the orthants cover all of $\Rd$, so they fulfill 2(a), 2(b), and 2(c).
\end{proof}

\subsection{Add-and-Cut Constructions}

The next theorem gives us a way to add vertices to a set that is already self-polar while preserving that property.  The key idea is to add a vertex and intersect with a half-space at the same time.  See Figure \ref{addcutfigure} for an example of a successful add-and-cut operation.

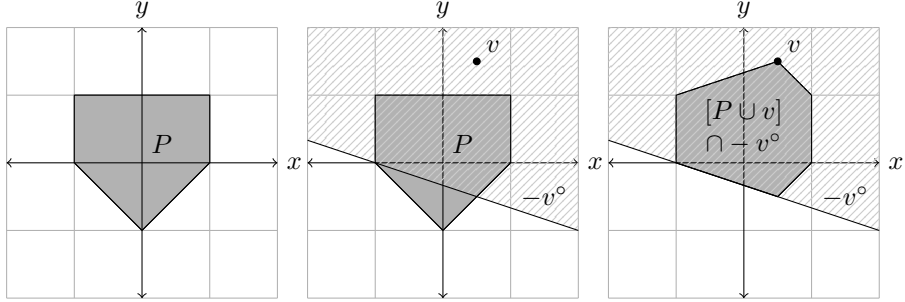
\begin{figure}

\begin{minipage}[c]{0.33\textwidth}
\centering
\begin{tikzpicture}[x=0.9cm,y=0.9cm]
\draw[fill=gray!60] (-1,1) -- (-1,0) --  (0,-1) -- (1,0) -- (1,1) -- (0,1) -- cycle;
\draw[gray!60] (1,2) -- (1,-2);
\draw[gray!60] (2,2) -- (2,-2);
\draw[gray!60] (-1,2) -- (-1,-2);
\draw[gray!60] (-2,2) -- (-2,-2);
\draw[gray!60] (2,1) -- (-2,1);
\draw[gray!60] (2,2) -- (-2,2);
\draw[gray!60] (2,-1) -- (-2,-1);
\draw[gray!60] (2,-2) -- (-2,-2);
\draw[<->] (-2,0) -- (2,0) node[right] {$x$};
\draw[<->] (0,-2) -- (0,2) node[above] {$y$};
\node[above right] at (0,0) {$P$};
\draw (-1,1) --(-1,0)-- (0,-1) -- (1,0) -- (1,1) -- (0,1) -- cycle;
\end{tikzpicture}
\end{minipage}
\begin{minipage}[c]{0.33\textwidth}
\centering
\begin{tikzpicture}[x=0.9cm,y=0.9cm]
\draw[fill=gray!60] (-1,1) -- (-1,0) --  (0,-1) -- (1,0) -- (1,1) -- (0,1) -- cycle;
\draw[gray!60] (1,2) -- (1,-2);
\draw[gray!60] (2,2) -- (2,-2);
\draw[gray!60] (-1,2) -- (-1,-2);
\draw[gray!60] (-2,2) -- (-2,-2);
\draw[gray!60] (2,1) -- (-2,1);
\draw[gray!60] (2,2) -- (-2,2);
\draw[gray!60] (2,-1) -- (-2,-1);
\draw[gray!60] (2,-2) -- (-2,-2);
\draw[<->] (-2,0) -- (2,0) node[right] {$x$};
\draw[<->] (0,-2) -- (0,2) node[above] {$y$};
\fill[pattern=north east lines, pattern color=gray!40] (-2,1/3) -- (2,-1) -- (2,2) -- (-2,2) -- cycle;
\node[above right] at (1/2,3/2) {$v$};
\draw[fill=black] (1/2,3/2) circle (0.05);
\node[above right] at (0,0) {$P$};
\node at (1.5,-0.5) {$-v^\circ$};
\draw (-1,1) --(-1,0)-- (0,-1) -- (1,0) -- (1,1) -- (0,1) -- cycle;
\draw (-2,1/3) -- (2,-1);
\end{tikzpicture}
\end{minipage}%
\begin{minipage}[c]{0.33\textwidth}
\centering
\begin{tikzpicture}[x=0.9cm,y=0.9cm]
\draw[fill=gray!60] (-1,1) -- (-1,0) --  (1/2,-1/2) -- (1,0) -- (1,1) -- (1/2,3/2) -- cycle;
\draw[gray!60] (1,2) -- (1,-2);
\draw[gray!60] (2,2) -- (2,-2);
\draw[gray!60] (-1,2) -- (-1,-2);
\draw[gray!60] (-2,2) -- (-2,-2);
\draw[gray!60] (2,1) -- (-2,1);
\draw[gray!60] (2,2) -- (-2,2);
\draw[gray!60] (2,-1) -- (-2,-1);
\draw[gray!60] (2,-2) -- (-2,-2);
\draw[<->] (-2,0) -- (2,0) node[right] {$x$};
\draw[<->] (0,-2) -- (0,2) node[above] {$y$};
\fill[pattern=north east lines, pattern color=gray!40] (-2,1/3) -- (2,-1) -- (2,2) -- (-2,2) -- cycle;
\node[above right] at (1/2,3/2) {$v$};
\draw[fill=black] (1/2,3/2) circle (0.05);
\node[above, align=center] at (0,0) {$[P\cup v]$\\$\cap-v^\circ$};
\node at (1.5,-0.5) {$-v^\circ$};
\draw (-1,1) --(-1,0)-- (1/2,-1/2) -- (1,0) -- (1,1) -- (1/2,3/2) -- cycle;
\draw (-2,1/3) -- (2,-1);
\end{tikzpicture}
\end{minipage}
\caption{Adding a vertex and cutting a new facet while preserving self-polarity.}\label{addcutfigure}\end{figure}

Caution must taken to make sure that the order in which these operations (adding a vertex and cutting a facet) are done will make no difference to the outcome; in other words, so that $[(P\cap Uv^\circ)\cup v]=[P\cup v]\cap Uv^\circ$ for the new vertex $v$.  See Figure \ref{badaddcutfigure} for an unsuccessful example where the order does make a difference.

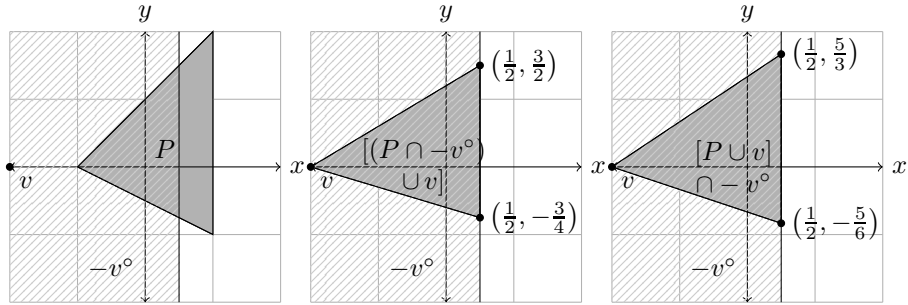
\begin{figure}
\begin{minipage}[c]{0.33\textwidth}
\centering
\begin{tikzpicture}[x=0.9cm,y=0.9cm]
\draw[fill=gray!60] (-1,0) -- (1,-1) --  (1,2) -- cycle;
\draw[gray!60] (1,2) -- (1,-2);
\draw[gray!60] (2,2) -- (2,-2);
\draw[gray!60] (-1,2) -- (-1,-2);
\draw[gray!60] (-2,2) -- (-2,-2);
\draw[gray!60] (2,1) -- (-2,1);
\draw[gray!60] (2,2) -- (-2,2);
\draw[gray!60] (2,-1) -- (-2,-1);
\draw[gray!60] (2,-2) -- (-2,-2);
\draw[<->] (-2,0) -- (2,0) node[right] {$x$};
\draw[<->] (0,-2) -- (0,2) node[above] {$y$};
\fill[pattern=north east lines, pattern color=gray!40] (1/2,2) -- (1/2,-2) -- (-2,-2) -- (-2,2) -- cycle;
\node[below right] at (-2,0) {$v$};
\draw[fill=black] (-2,0) circle (0.05);
\node[above right] at (0,0) {$P$};
\node at (-0.5,-1.5) {$-v^\circ$};
\draw (-1,0) -- (1,-1) --  (1,2) -- cycle;
\draw (1/2,2) -- (1/2,-2);
\end{tikzpicture}
\end{minipage}
\begin{minipage}[c]{0.33\textwidth}
\centering
\begin{tikzpicture}[x=0.9cm,y=0.9cm]
\draw[fill=gray!60] (-2,0) -- (1/2,-3/4) --  (1/2,3/2) -- cycle;
\draw[gray!60] (1,2) -- (1,-2);
\draw[gray!60] (2,2) -- (2,-2);
\draw[gray!60] (-1,2) -- (-1,-2);
\draw[gray!60] (-2,2) -- (-2,-2);
\draw[gray!60] (2,1) -- (-2,1);
\draw[gray!60] (2,2) -- (-2,2);
\draw[gray!60] (2,-1) -- (-2,-1);
\draw[gray!60] (2,-2) -- (-2,-2);
\draw[<->] (-2,0) -- (2,0) node[right] {$x$};
\draw[<->] (0,-2) -- (0,2) node[above] {$y$};
\fill[pattern=north east lines, pattern color=gray!40] (1/2,2) -- (1/2,-2) -- (-2,-2) -- (-2,2) -- cycle;
\node[below right] at (-2,0) {$v$};
\draw[fill=black] (-2,0) circle (0.05);
\node[right] at (1/2,-3/4) {$\left(\frac{1}{2},-\frac{3}{4}\right)$};
\draw[fill=black] (1/2,-3/4) circle (0.05);
\node[right] at (1/2,3/2) {$\left(\frac{1}{2},\frac{3}{2}\right)$};
\draw[fill=black] (1/2,3/2) circle (0.05);
\node[align=center] at (-1/3,0) {$[(P\cap -v^\circ)$\\$\cup\, v]$};
\node at (-0.5,-1.5) {$-v^\circ$};
\draw (-2,0) -- (1/2,-3/4) --  (1/2,3/2) -- cycle;
\draw (1/2,2) -- (1/2,-2);
\end{tikzpicture}
\end{minipage}%
\begin{minipage}[c]{0.33\textwidth}
\centering
\begin{tikzpicture}[x=0.9cm,y=0.9cm]
\draw[fill=gray!60] (-2,0) -- (1/2,-5/6) --  (1/2,2*5/6) -- cycle;
\draw[gray!60] (1,2) -- (1,-2);
\draw[gray!60] (2,2) -- (2,-2);
\draw[gray!60] (-1,2) -- (-1,-2);
\draw[gray!60] (-2,2) -- (-2,-2);
\draw[gray!60] (2,1) -- (-2,1);
\draw[gray!60] (2,2) -- (-2,2);
\draw[gray!60] (2,-1) -- (-2,-1);
\draw[gray!60] (2,-2) -- (-2,-2);
\draw[<->] (-2,0) -- (2,0) node[right] {$x$};
\draw[<->] (0,-2) -- (0,2) node[above] {$y$};
\fill[pattern=north east lines, pattern color=gray!40] (1/2,2) -- (1/2,-2) -- (-2,-2) -- (-2,2) -- cycle;
\node[below right] at (-2,0) {$v$};
\draw[fill=black] (-2,0) circle (0.05);
\node[right] at (1/2,-5/6) {$\left(\frac{1}{2},-\frac{5}{6}\right)$};
\draw[fill=black] (1/2,-5/6) circle (0.05);
\node[right] at (1/2,2*5/6) {$\left(\frac{1}{2},\frac{5}{3}\right)$};
\draw[fill=black] (1/2,2*5/6) circle (0.05);
\node[align=center] at (-1/5,0) {$[P\cup v]$\\$\cap -v^\circ$};
\node at (-0.5,-1.5) {$-v^\circ$};
\draw (-2,0) -- (1/2,-5/6) --  (1/2,2*5/6) -- cycle;
\draw (1/2,2) -- (1/2,-2);
\end{tikzpicture}
\end{minipage}
\caption{An unsuccessful choice of $v$ that results in $[(P\cap -v^\circ)\cup v]\neq[P\cup v]\cap -v^\circ$.}\label{badaddcutfigure}\end{figure}

\begin{theorem}\label{addcuttheorem}
For $U$, an orthogonal transformation of $\Rd$, and a set $P\subset\Rd$ such that $P=UP^\circ$, if there is a point $x\in\Rd$ such that $[(P\cap Ux^\circ)\cup x]=[P\cup x]\cap Ux^\circ$ and $U^2x=x$, then $Q=UQ^\circ$ for the set $Q=[P\cup x]\cap Ux^\circ$.
\end{theorem}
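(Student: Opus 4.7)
The plan is to compute $UQ^\circ$ directly using the properties in Lemma \ref{polarproperties} and the polar-of-matrix formula in Lemma \ref{polarofmatrixlemma}, reduce it to the other expression for $Q$ given in the hypothesis, and conclude. The computation is essentially algebraic manipulation once one recognizes that an orthogonal $U$ commutes with $(\cdot)^\circ$, with $[\cdot]$, and with finite unions and intersections.

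Starting from $Q = [P\cup x]\cap Ux^\circ$, I would apply Lemma \ref{polarproperties}(7) to get $Q^\circ = [[P\cup x]^\circ \cup (Ux^\circ)^\circ]$. Then Lemma \ref{polarproperties}(2) gives $[P\cup x]^\circ = (P\cup x)^\circ$, and Lemma \ref{polarproperties}(6) gives $(P\cup x)^\circ = P^\circ\cap x^\circ$. For the other piece, Lemma \ref{polarofmatrixlemma} (with $U$ orthogonal) combined with Lemma \ref{polarproperties}(3) gives $(Ux^\circ)^\circ = U(x^\circ)^\circ = U[x]$. Thus
\begin{equation*}
Q^\circ = \bigl[(P^\circ \cap x^\circ) \cup U[x]\bigr].
\end{equation*}

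Next, applying $U$ and using that $U$ commutes with convex hull, closure, union, and intersection (being a continuous linear bijection fixing the origin),
\begin{equation*}
UQ^\circ = \bigl[(UP^\circ \cap Ux^\circ) \cup U^2[x]\bigr].
\end{equation*}
Now the two hypotheses kick in: $P = UP^\circ$ and $U^2 x = x$, so $UP^\circ\cap Ux^\circ = P\cap Ux^\circ$ and $U^2[x] = [U^2 x] = [x]$. Since $[x]$ is already contained in any set of the form $[A\cup\{x\}]$ (it is the segment from $0$ to $x$, which lies in the closed convex hull of $A\cup\{x\}\cup\{0\}$), we have $[A\cup[x]] = [A\cup x]$, so
\begin{equation*}
UQ^\circ = \bigl[(P\cap Ux^\circ) \cup x\bigr].
\end{equation*}
Finally, the compatibility hypothesis $[(P\cap Ux^\circ)\cup x] = [P\cup x]\cap Ux^\circ$ yields $UQ^\circ = Q$, as desired.

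The calculation is almost mechanical; the only subtle points are the two small lemmas I would verify in passing: that an orthogonal $U$ satisfies $U[A] = [UA]$ (immediate from $U\cdot 0 = 0$ together with linearity and continuity), and that $[A\cup[x]] = [A\cup\{x\}]$. The genuinely substantive content of the theorem is packaged entirely in the compatibility assumption $[(P\cap Ux^\circ)\cup x]=[P\cup x]\cap Ux^\circ$ together with $U^2x=x$; without these the calculation would stall at the penultimate line. So the main ``obstacle'' is not really an obstacle at all, but rather ensuring one invokes exactly those two hypotheses at exactly the right moment, and tracking the $[\cdot]$ operation carefully through the polar identities.
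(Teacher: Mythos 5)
Your proof is correct and is essentially the paper's argument in a slightly different order: the paper pushes $U$ inside at the outset via $UQ^\circ=(UQ)^\circ$ and then unwinds the polar identities, while you compute $Q^\circ$ first and apply $U$ at the end, but both reduce $UQ^\circ$ to $[(P\cap Ux^\circ)\cup x]$ using the same polar/orthogonality lemmas and then invoke the compatibility hypothesis to conclude $UQ^\circ = Q$.
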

\begin{proof}
Assume that $P$ and $x$ are as described.  Then
\begin{align*}
UQ^\circ &=  \left([UP\cup Ux]\cap U^2x^\circ\right)^\circ \\
&=  \left((UP\cup Ux)^{\circ\circ}\cap x^\circ\right)^\circ \\
&=  \left((UP\cup Ux)^{\circ}\cup x\right)^{\circ\circ} \\
&=  \left((UP^\circ\cap Ux^\circ) \cup x\right)^{\circ\circ} \\
&=  \left((P\cap Ux^\circ) \cup x\right)^{\circ\circ} \\
&=  \left[(P\cap Ux^\circ) \cup x\right] \\
&= [P\cup x]\cap Ux^\circ \\
&= Q
\end{align*}
\end{proof}

To use Theorem \ref{addcuttheorem}, an easy way to find a point $x$ which can be added to a polytope is to choose a facet, take a point in the interior of the facet, and slightly increase the radius of that point to get $x$.  As long as the facet chosen does not contain the vertex to which it is dual, and as long as $x$ is still beneath the planes of all the other facets, the conditions of the theorem will be met.  A pyramid will be made over the facet, with $x$ at the apex, and the dual vertex to the facet will be cut off to form a new facet.  We state this more formally as a corollary now, which will be of much use in the investigation of combinatorial types in the next section.

\begin{corollary}
For $U$, an involutory orthogonal transformation of $\Rd$, and a polytope $P\subset\Rd$ such that $P=UP^\circ$, if vertex $v$ does not lie in its own dual facet $f$, then a polytope $Q=UQ^\circ$ exists which is equal to $P$ but with the following modifications:
\begin{enumerate}
\item A new vertex beyond $f$ but beneath all other facets creates a pyramid over $f$
\item A new facet cuts off $v$ by slicing through the interior of every face containing $v$
\end{enumerate}
\end{corollary}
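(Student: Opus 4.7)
The plan is to apply Theorem \ref{addcuttheorem} with an appropriately chosen point $x$. Since the self-polarity pairs $v$ with the facet $f$, that facet lies on the hyperplane $\{y:\langle y,Uv\rangle=1\}$, so any point $c$ in the relative interior of $f$ satisfies $\langle c,Uv\rangle=1$ and $\langle c,Uw\rangle<1$ strictly for every other vertex $w$ of $P$. Choose such a $c$, generic enough that $\langle c,Uc\rangle<1$ as well, and set $x=c+\epsilon Uv$ for a small $\epsilon>0$. Since $U$ is involutory, $U^2x=x$ holds automatically.

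Next I would verify the geometric conditions on $x$. Direct computation gives $\langle x,Uv\rangle=1+\epsilon|v|^2>1$, so $x$ lies strictly beyond $f$; and $\langle x,Uw\rangle=\langle c,Uw\rangle+\epsilon\langle v,w\rangle<1$ for $w\neq v$ once $\epsilon$ is small, so $x$ lies strictly beneath every other facet, meaning $[P\cup x]$ is precisely the pyramid over $f$ with apex $x$. Because $U$ is involutory orthogonal, $U=U^T$ and so $\langle Ux,w\rangle=\langle x,Uw\rangle$; the same estimates show that $\partial(Ux^\circ)$ strictly separates $v$ from every other vertex of $P$, forcing the cut to pass through the relative interior of every face of $P$ containing $v$. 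Finally, $\langle x,Ux\rangle=\langle c,Uc\rangle+O(\epsilon)<1$ for small $\epsilon$, so $x$ itself lies strictly inside $H:=Ux^\circ$.

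The key step, and what I expect to be the main obstacle, is verifying the commutativity hypothesis $[(P\cap H)\cup x]=[P\cup x]\cap H$ required by Theorem \ref{addcuttheorem}. The inclusion $\subseteq$ is immediate from convexity. For the reverse, write a point of the right side as $y=(1-t)x+tp$ with $p\in P$; if $p\in H$ we are done. Otherwise, trace the segment from $x$ to $p$: since $x$ is beneath every facet of $P$ except $f$, the segment must enter $P$ at a point $q\in f$, and because $f$ contains no vertex outside $H$, the point $q$ lies strictly in the interior of $H$. Consequently the segment exits $H$ at a point $m$ strictly between $q$ and $p$, whence $m\in[q,p]\subseteq P$ and $m\in\partial H$, giving $m\in P\cap H$. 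Then $y$ lies on the sub-segment $[x,m]$ and is thereby exhibited as a convex combination of $x$ and $m\in P\cap H$, placing it in the left side.

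With the hypothesis established, Theorem \ref{addcuttheorem} delivers $Q:=[P\cup x]\cap H$ satisfying $Q=UQ^\circ$; by construction $Q$ is the polytope obtained from $P$ by erecting a pyramid over $f$ with apex $x$ and truncating off $v$ along the new facet $\partial H$, matching the claimed description.
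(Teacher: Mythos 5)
Your proof is correct and takes essentially the same approach as the paper's: choose a point $x$ just beyond the facet $f$ but beneath all other facets, verify $U^2 x = x$ from involutivity, and apply Theorem \ref{addcuttheorem}. The paper verifies the commutativity condition $[(P\cap Ux^\circ)\cup x]=[P\cup x]\cap Ux^\circ$ with a short informal argument that the two modifications are separated by the hyperplane through $f$, whereas you supply a more explicit and careful segment-tracing verification (and a concrete formula $x=c+\epsilon Uv$); the underlying geometric idea is the same.
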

\begin{proof}
Using Theorem \ref{addcuttheorem}, we let $x$ be some point which is beyond $f$ but beneath all other facets.  We have $U^2x=x$ by assumption that $U$ is an involution.  As for the other requirement, consider the following.  Because $x$ is beneath all facets except $f$, taking the convex hull of $P$ with $x$ only affects and depends on points beyond facet $f$.  On the other hand, since $x$ is beneath all facets except $f$, that means $v$ is the only vertex that is beyond $Ux^\circ$.  Hence the intersection $P\cap Ux^\circ$ only affects and depends on points strictly beneath facet $f$.  Since the two modifications are thus separated by the hyperplane through $f$, the result of performing the two modifications in either order is the same, and so $[(P\cap Ux^\circ)\cup x]=[P\cup x]\cap Ux^\circ$, fulfilling the conditions of the previous theorem.  The description of modification 1 is a consequence of choosing $x$ beyond $f$ but beneath all other facets, and modification 2 is simply the dual description of modification 1.
\end{proof}

\section{Applications}\label{sec:applications}

\subsection{Vertex Graph Coloring}

One practical application for negatively self-polar polytopes is in generating graphs with high chromatic number $\chi$ which do not have any $\chi$-cliques.  These graphs can be used as test inputs for algorithms that calculate chromatic number or as counter-examples for conjectures regarding chromatic number.

First, we need a few definitions.  We assume the reader's basic familiarity with graphs.  The \emph{chromatic number} $\chi(G)$ of a graph $G$ is the minimal number of colors necessary to assign one color to each vertex in such a way that no edge connects two vertices of the same color.  A $k$\emph{-clique} is a set of $k$ vertices such that every pair is connected.  Obviously, $\chi(G)$ must be at least as large as the largest clique in $G$.

Let $G_P$ be the graph formed from the vertices of a $d$-dimensional polytope $P=-P^\circ$, where two vertices $v$ and $w$ are connected iff $\langle v,w\rangle=-1$.  We will first present a lemma that gives a necessary condition for the existence of a clique, and then we will use the lemma to prove our main theorem for this section.

\begin{lemma}\label{cliquelemma}
If $G_P$ has a $k$-clique $\{v_1,v_2,\dots,v_k\}$, then the cross-section of $P$ given by $P\cap \aff\big(\{v_1,v_2,\dots,v_k\}\big)$ is a $(k-1)$-dimensional simplex with vertices $\{v_1,v_2,\dots,v_k\}$.
\end{lemma}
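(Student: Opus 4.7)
The plan is to exploit the negative self-polarity of $P$ in the form that for every pair $x,y \in P$ one has $\langle x,y\rangle \geq -1$, and then to treat the clique condition $\langle v_i,v_j\rangle = -1$ (for $i\neq j$) as a system of equations in the barycentric coordinates of an arbitrary point of the cross-section.

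First I would observe that because $P = -P^\circ$, for any $x \in P$ and any vertex $v$ of $P$ we have $\langle -v, x \rangle \leq 1$, i.e.\ $\langle x, v \rangle \geq -1$. Next I would establish affine independence of $\{v_1,\ldots,v_k\}$: suppose $\sum_i \lambda_i v_i = 0$ with $\sum_i \lambda_i = 0$; taking the inner product with $v_j$ and using $\langle v_i,v_j\rangle = -1$ for $i \neq j$ yields
\[
\lambda_j \lvert v_j\rvert^2 - \sum_{i\neq j}\lambda_i = \lambda_j\lvert v_j\rvert^2 + \lambda_j = \lambda_j(\lvert v_j\rvert^2 + 1) = 0,
\]
and since $P \in \Pzd$ has the origin in its interior we have $\lvert v_j\rvert^2 + 1 > 0$, so $\lambda_j = 0$ for all $j$. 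Hence $\conv(\{v_1,\ldots,v_k\})$ is a $(k-1)$-simplex sitting inside $P$, and its affine hull has dimension $k-1$.

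For the reverse inclusion, take any $x \in P \cap \aff(\{v_1,\ldots,v_k\})$ and write $x = \sum_i \mu_i v_i$ with $\sum_i \mu_i = 1$. Taking inner product with $v_j$ gives
\[
\langle x, v_j\rangle = \mu_j\lvert v_j\rvert^2 - \sum_{i\neq j}\mu_i = \mu_j(\lvert v_j\rvert^2 + 1) - 1.
\]
The first step of this plan supplies $\langle x, v_j\rangle \geq -1$, so $\mu_j(\lvert v_j\rvert^2+1) \geq 0$, forcing $\mu_j \geq 0$. Thus $x$ is a convex combination of the $v_i$'s, so $x \in \conv(\{v_1,\ldots,v_k\})$. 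Combined with the obvious inclusion $\conv(\{v_1,\ldots,v_k\}) \subseteq P \cap \aff(\{v_1,\ldots,v_k\})$, this gives equality.

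The only real obstacle is bookkeeping the algebra with the correct sign from negative self-polarity; everything else is a one-line consequence once the identity $\mu_j(\lvert v_j\rvert^2+1) - 1 = \langle x, v_j\rangle$ is in hand, since the same identity simultaneously delivers affine independence (by setting the sum to zero) and forces non-negativity of barycentric coordinates (by bounding the dot product below).
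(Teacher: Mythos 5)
Your proof is correct and uses essentially the same approach as the paper: take inner products with each $v_j$, use the clique condition $\langle v_i,v_j\rangle=-1$ together with $\langle x,v_j\rangle\geq -1$ from negative self-polarity, and extract both affine independence and non-negativity of barycentric coordinates from the resulting identity. The only difference is stylistic — the paper argues the convexity half by contradiction and the independence half via a translation to the origin, whereas you argue both directly from the single identity $\langle x,v_j\rangle = \mu_j(\lvert v_j\rvert^2+1)-1$, which is a clean streamlining.
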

\begin{proof}
Assume $V:=\{v_1,v_2,\dots,v_k\}$ form a clique of $G_P$.  First we will show that $P\cap \aff(V)=\conv(V)$.  Suppose that $x\in\aff(V)$ but $x\notin\conv(V)$.  Then there exist $a_1,a_2,\dots,a_k\in\mathbb{R}$ such that $x=a_1v_1+\dots+a_kv_k$ and $a_1+\dots+a_k=1$ but there is at least one $a_i$ such that $a_i<0$.

Then we have $\langle x,-v_i\rangle = \langle a_1v_1+\dots+a_kv_k,-v_i\rangle
= a_1\langle v_1,-v_i\rangle+\dots+a_k\langle v_k,-v_i\rangle$.  Since $V$ is a clique, we know $\langle v_j,-v_i\rangle = 1$ unless $i=j$, in which case $\langle v_i,-v_i\rangle=-|v_i|^2$.

Thus $\langle x,-v_i\rangle=a_1+\dots+a_{i-1}-a_i|v_i|^2+a_{i+1}+\dots+a_k$.  By adding and subtracting the $a_i$ term, we can write this as $a_1+\dots+a_k-a_i-a_i|v_i|^2=1-a_i-a_i|v_i|^2$.  By assumption, $a_i<0$, so we have shown $\langle x,-v_i\rangle>1$, which means that $x\notin-P^\circ=P$.  Therefore, if a point $x$ is in $\aff(V)$ and in $P$, it must also be in $\conv(V)$, hence $P\cap\aff(V)=\conv(V)$.

Now it only remains to show that $\conv(V)$ is a $(k-1)$-dimensional simplex.  Since $|V|=k$, if $\conv(V)$ is $(k-1)$-dimensional, then it must be a simplex, because it doesn't have enough vertices to be anything else.  To show $\conv(V)$ is $(k-1)$-dimensional, we will translate $V$ so that $v_k$ is at the origin, and then show that the other $k-1$ translated vertices $\{v_1-v_k,v_2-v_k,\dots,v_{k-1}-v_k\}$ are linearly independent.

Suppose that there are some $c_1,\dots,c_{k-1}\in\mathbb{R}$ such that
\[\sum_{j=1}^{k-1}{c_j(v_j-v_k)}=0\]
Then for any $v_i\in\{v_1,\dots,v_{k-1}\}$ we must have
\begin{equation}\label{graphseceqn}
\left\langle v_i,\sum_{j=1}^{k-1}{c_j(v_j-v_k)}\right\rangle=0
\end{equation}
But we also know from the fact that $V$ is a clique that

\begin{align*}
\left\langle v_i,\sum_{j=1}^{k-1}{c_j(v_j-v_k)}\right\rangle
&=  \sum_{j=1}^{k-1}\left\langle v_i,{c_j(v_j-v_k)}\right\rangle \\
&=  \sum_{j=1}^{k-1}c_j\big(\langle v_i,v_j\rangle-\langle v_i,v_k\rangle\big)\\
\end{align*}

Since $\langle v_i,v_j\rangle-\langle v_i,v_k\rangle=0$ unless $j=i$, the summation becomes the single term $c_i\big(\langle v_i,v_i\rangle-\langle v_i,v_k\rangle\big)=c_i(|v_i|^2+1)$, which must equal $0$ due to Equation \ref{graphseceqn}.  Hence $c_i=0$, and since this is true for all $v_i\in\{v_1,\dots,v_{k-1}\}$, then $\{v_1-v_k,\dots,v_{k-1}-v_k\}$ are linearly independent, and so $\conv(V)$ is a $(k-1)$-dimensional simplex.
\end{proof}

\begin{theorem}
Let $G_P$ be the graph formed from the vertices of a $d$-dimensional polytope $P=-P^\circ$, where two vertices $v$ and $w$ are connected iff $\langle v,w\rangle=-1$.  Then the chromatic number $\chi(G_P)\geq d+1$, and $G_P$ has a $(d+1)$-clique if and only if $P$ is a simplex.
\end{theorem}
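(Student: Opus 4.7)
The statement splits into a chromatic lower bound and a characterization of $(d+1)$-cliques; I outline my plan for each.

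For the clique characterization, Lemma~\ref{cliquelemma} does the main work. A $(d+1)$-clique $V=\{v_1,\dots,v_{d+1}\}\subseteq\vertop(P)$ yields a $d$-dimensional simplex $\conv(V)=P\cap\aff(V)$; since $P$ is already $d$-dimensional, $\aff(V)=\mathbb{R}^d$, so $P=\conv(V)$ is a $d$-simplex. Conversely, if $P$ is a $d$-simplex with vertices $v_1,\dots,v_{d+1}$, then the facet opposite each $v_i$ has vertex set $\{v_j\}_{j\neq i}$, and negative self-polarity pairs $v_i$ with this facet, forcing $\langle -v_i,v_j\rangle=1$, i.e.\ $\langle v_i,v_j\rangle=-1$, for all $i\neq j$; hence $\vertop(P)$ is a $(d+1)$-clique.

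For the chromatic bound $\chi(G_P)\geq d+1$, my plan is to pass to the lifts $u_v:=(v,1)\in\mathbb{R}^{d+1}$. From $P=-P^\circ$ we have $\langle v,-w\rangle\leq 1$ for every pair of vertices, so
\[
\langle u_v,u_w\rangle=\langle v,w\rangle+1\geq 0,
\]
with equality precisely when $vw\in E(G_P)$. Since $P$ is full-dimensional with $0$ in its interior, $\{u_v\}$ spans $\mathbb{R}^{d+1}$. A proper $k$-coloring with color classes $C_1,\dots,C_k$ then decomposes $\{u_v\}$ into $k$ groups with pairwise strictly positive inner products, and each group lies in the open half-space $\{x:\langle x,n_i\rangle>0\}$ where $n_i:=\sum_{v\in C_i}u_v$.

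The crux is showing that covering a spanning set of $\mathbb{R}^{d+1}$ by $k$ open half-spaces of this form forces $k\geq d+1$. I expect this to require a topological argument: extend the finite coloring continuously to an open cover of the boundary sphere $\partial P\cong S^{d-1}$ by sets in which the \emph{polar antipodal} relation $\langle x,y\rangle=-1$ (playing the role of an antipodal pair relative to the self-polarity $U=-I$) does not appear, and then invoke a Lusternik--Schnirelman-type covering theorem to conclude $k\geq d+1$. The main obstacle is showing that this polar antipodal relation carries enough of the rigidity of a standard $\mathbb{Z}_2$-action to enable the theorem; a natural route is to exploit the involutive dual automorphism of the face lattice realized by $U=-I$ to construct a fixed-point-free continuous involution on $\partial P$ that matches the relation on vertices and their dual facets, after which the classical Borsuk--Ulam machinery supplies the required lower bound.
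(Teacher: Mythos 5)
Your treatment of the clique characterization is correct and follows the paper's route (it is exactly the intended consequence of Lemma~\ref{cliquelemma}), and you add a useful explicit argument for the converse direction: for a $d$-simplex, Lemma~\ref{dualautolemma} forces the negative-polarity dual automorphism to pair each vertex with the opposite facet, giving $\langle v_i, v_j\rangle=-1$ for $i\neq j$.

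The chromatic bound $\chi(G_P)\geq d+1$ is where the proposal has a genuine gap. The paper does not reprove this; it observes that Lovász's Lemma~4 and Theorem~A from \cite{lovasz} already establish $\chi(G_P)\geq d+1$ without the equidistance hypothesis, and simply cites them. You attempt an independent argument, and it stalls at the step you yourself flag as the crux. The half-space reformulation after lifting is fine as far as it goes: the $u_v=(v,1)$ span $\mathbb{R}^{d+1}$ and each color class $C_i$ does sit in the open half-space normal to $n_i=\sum_{v\in C_i}u_v$. But the assertion that covering a spanning set by $k$ open half-spaces \emph{of this form} forces $k\geq d+1$ is not a theorem you can quote — covering a spanning set by open half-spaces in general can be done with as few as $2$ half-spaces — so the lift alone proves nothing without the antipodal structure. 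You then pivot to a Lusternik--Schnirelmann / Borsuk--Ulam plan on $\partial P\cong S^{d-1}$, which is indeed the right kind of machinery (and essentially the strategy behind Lovász's Lemma~4), but you leave unproved the two load-bearing steps: (i) extending the color classes from the finite vertex set to an open cover of all of $\partial P$ in which the polar-antipodal relation still does not occur, and (ii) producing a fixed-point-free continuous involution on $\partial P$ compatible with that relation so that Borsuk--Ulam applies. You explicitly label these as "the main obstacle," which confirms the argument is a plan rather than a proof. As written, the proposal therefore establishes the clique characterization but not the chromatic lower bound; to close the gap you should either carry out the topological construction in full (defining the involution via the dual automorphism and verifying freeness and continuity), or, as the paper does, invoke Lovász's Lemma~4 and Theorem~A and verify that their proofs do not use equidistance of the vertices.
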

\begin{proof}
Lov\'{a}sz \cite{lovasz} showed that for a $d$-dimensional negatively self-polar polytope $P$ with vertices equidistant from the origin, $G_P$ has chromatic number $d+1$.  In his proof of that theorem, he used Lemma 4 and Theorem A of \cite{lovasz}, which showed that $\chi(G_P)\geq d+1$.  However, Lemma 4 and Theorem A did not rely on having all vertices equidistant from the origin, hence we have $\chi(G_P)\geq d+1$ more generally for all negatively self-polar polytopes.

The fact that $G_P$ has a $(d+1)$-clique if and only if $P$ is a simplex follows from Lemma \ref{cliquelemma}.
\end{proof}

\subsection{Algebra of Indicator Functions}
\label{algebrasection}
We now turn our attention to the topic of indicator functions of polytopes, and algebraic techniques which can be applied to them.  We will restrict our discussion solely to a certain class of polytopes; however, the topics below can be broadened to include all closed convex sets, and we refer the reader to \cite{barvinok} for further information.  A few definitions are necessary to begin with.

First, we will denote the \textit{indicator function} of a set $A\subseteq\Rd$ by $[A]$.  Note that we are re-using a notation which was defined differently in previous sections.  In this section, $[A]$ will always be defined as a function from $\Rd$ to $\{0,1\}$ given by
\[
[A](x)=
 \begin{cases} 
      0, & x\in A \\
      1, & x\notin A 
   \end{cases}
\]

Note that indicator functions are subject to the Principle of Inclusion and Exclusion (see, for example, \cite{barvinok}).

Next we remind the reader that $\Pzd$ is the set of all $d$-dimensional polytopes which contain the origin in their interior.  Note that this set is closed under the polar and the intersection operations, and that $P^{\circ\circ}=P$ for all $P\in\mathcal{P}$.

Finally, we define $\mathcal{A}_d$ to be the $\mathbb{R}$-algebra spanned by the indicator functions of polytopes in $\Pzd$.  Hence, an element $f\in\mathcal{A}_d$ can be written
\[f=\sum_{i=1}^m a_i[P_i]\]
where $P_i\in\Pzd$, $a_i\in\mathbb{R}$, and $m\in\mathbb{N}$.

$\mathcal{A}_d$ is an algebra because it is closed under multiplication.  This is because for $A,B\in\Pzd$ we have $[A][B]=[A\cap B]$, where $A\cap B\in\Pzd$.  Note, however, that $\mathcal{A}_d$ does not have a multiplicative identity.

Now we are ready to state our first result.

\begin{theorem}
\label{generated}
$\mathcal{A}_d$ is generated as an $\mathbb{R}$-algebra by the set of indicator functions of all $P\in\Pzd$ such that $P=-P^\circ$.
\end{theorem}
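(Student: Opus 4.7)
The plan is to show that every generator $[P]$ of $\mathcal{A}_d$, for $P\in\Pzd$, lies in the subalgebra $\mathcal{B}_d$ generated by $\{[Q] : Q=-Q^\circ,\ Q\in\Pzd\}$. The central tool is Corollary \ref{intermediatecorollary}, which produces a negatively self-polar polytope sandwiched between any polyhedral set $A$ satisfying $A\subseteq -A^\circ$ and $-A^\circ$.

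The first step is to identify a natural ``symmetric core'' inside each $P\in\Pzd$: the polytope $R:=P\cap(-P^\circ)$ lies in $\Pzd$ and automatically satisfies $R\subseteq -R^\circ$, because $R\subseteq P$ yields $-R^\circ\supseteq -P^\circ\supseteq R$. Applying Corollary \ref{intermediatecorollary} to $R$, we obtain a negatively self-polar $Q\in\Pzd$ with $R\subseteq Q\subseteq -R^\circ=[P\cup -P^\circ]$, so $[Q]\in\mathcal{B}_d$ as a first building block.

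The second step is to express $[P]$ as an $\mathbb{R}$-algebra combination of indicators of negatively self-polar polytopes. I would proceed by case analysis. In case (a), $P\subseteq -P^\circ$, the goal is to represent $P$ as a finite intersection of negatively self-polar polytopes $Q_1,\dots,Q_k$, each with $P\subseteq Q_i\subseteq -P^\circ$, produced by Corollary \ref{intermediatecorollary} applied to auxiliary polyhedral sets $S_i$ in its hypothesis that are engineered so that $\bigcap_i Q_i=P$; then $[P]=\prod_i[Q_i]$ (in the intersection-as-product convention) lies in $\mathcal{B}_d$. In case (b), $-P^\circ\subseteq P$, polar duality converts this into writing $P$ as a finite union of negatively self-polar polytopes, which enters $\mathcal{B}_d$ via inclusion-exclusion. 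Case (c), where neither inclusion holds, reduces to (a) and (b) via the split $\mathbf{1}_P=\mathbf{1}_R+(\mathbf{1}_P-\mathbf{1}_R)$ applied to $R$ (handled by (a) since $R\subseteq -R^\circ$) and to its polar-dual complement.

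The main obstacle is case (a): Corollary \ref{intermediatecorollary} only supplies \emph{some} negatively self-polar polytope between $P$ and $-P^\circ$, and it is not obvious that a finite collection can be chosen whose intersection is exactly $P$ rather than merely containing $P$. I would attack this by using the Add-and-Cut construction of Theorem \ref{addcuttheorem}, starting from the initial sandwiching neg. self-polar polytope $Q$ and iteratively modifying it so that each excess facet of $Q$ (a facet not already a facet of $P$) is aligned with the hyperplane of a facet of $P$. Finiteness of the number of facets of $P$ should yield termination. Showing that this facet-by-facet alignment can be carried out while preserving neg. self-polarity throughout, and that the final intersection coincides with $P$, is the delicate combinatorial heart of the argument.
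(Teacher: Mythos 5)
Your plan has the right high-level shape---show each generator $[P]$ lies in the subalgebra generated by indicators of negatively self-polar polytopes, and rely on the intermediate construction and inclusion-exclusion---but the case analysis around which you organize the argument has genuine gaps, and the paper's proof avoids it entirely. The paper proves two facts valid for all $P\in\Pzd$ simultaneously: (i) $P$ decomposes as a finite union of polytopes $Q_i\in\Pzd$ with $Q_i\subseteq -Q_i^\circ$, and (ii) any $P\in\Pzd$ with $P\subseteq -P^\circ$ decomposes as a finite \emph{intersection} of negatively self-polar polytopes. Inclusion-exclusion turns the union into a signed sum of products of the $[Q_i]$, and applying (ii) to each $Q_i$ finishes. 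No case split on the relation between $P$ and $-P^\circ$ is needed, and your case (c) never arises.

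Two of your cases do not close. In case (b), if $-P^\circ\subseteq P$, applying case (a) to $-P^\circ$ gives $-P^\circ=\bigcap_j R_j$ with each $R_j=-R_j^\circ$; but taking negative polars then yields $P=\left[\bigcup_j R_j\right]$, the closed convex hull of the union, not the union itself. Inclusion-exclusion applies to unions of sets, not to convex hulls of unions, so $[P]$ is not immediately a signed sum of products of $[R_j]$. In case (c) the function $\mathbf{1}_P-\mathbf{1}_R$ is the indicator of the nonconvex set $P\setminus(-P^\circ)$, which is not a polytope, and ``polar-dual complement'' is not a defined object; this case is essentially unaddressed. Even in case (a), the delicate step you flag---forcing the sandwiched intersection to equal $P$ exactly---is handled in the paper not by Add-and-Cut (for which you have no termination or correctness argument) but by a direct construction: intersect $-P^\circ$ with each orthant to get $Q_i$, set $P_i=\conv(P\cup Q_i)$, verify $P_i\subseteq -P_i^\circ$, apply the intermediate theorem to sandwich $R_i$ with $P_i\subseteq R_i=-R_i^\circ\subseteq -P_i^\circ$, and then the chain $-P^\circ=\bigcup Q_i\subseteq\bigcup R_i$ together with polarity gives $P=-(-P^\circ)^\circ\supseteq\bigcap R_i$, while $P\subseteq P_i\subseteq R_i$ gives the reverse inclusion, so $P=\bigcap R_i$ exactly. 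Finally, you should be aware of a technical point the paper must handle in (i): the orthant pieces $P\cap\mathcal{O}_i$ have the origin on their boundary, so they are not in $\Pzd$; the paper repairs this by adjoining a carefully chosen point $p_i=-\mu_i v_i$ just past the origin and re-verifying $Q_i\subseteq -Q_i^\circ$.
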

\begin{proof}
The theorem is due to two facts, which we will prove after we have discussed their consequences.  First, that every $P\in\Pzd$ is the union of a finite set of polytopes $\{Q_i\}_{i\in I}\subset\Pzd$ such that each $Q_i\subseteq -Q_i^\circ$.  Second, that every $P\in\Pzd$ such that $P\subseteq -P^\circ$ is the intersection of a finite set of polytopes $\{R_j\}_{j\in J}\subset\Pzd$ such that each $R_j=-R_j^\circ$.

The first fact means that for every $P\in\Pzd$, $[P]$ can be written, using the inclusion-exclusion principle, as
\begin{align*}
[P]
&=\left[\bigcup_{i\in I}Q_i\right]\\
&=\sum_{K\subseteq I}(-1)^{|K|+1}\left[\bigcap_{k\in K}Q_k\right]\\
&=\sum_{K\subseteq I}(-1)^{|K|+1}\prod_{k\in K}[Q_k]\\
\end{align*}

Then, using the second fact, each $Q_k$ has some finite set of polytopes $\{R_{k,j}\}_{j\in J_k}$ which are members of $\Pzd$, and whose intersection is $Q_k$, and are such that each $R_{k,j}=-R_{k,j}^\circ$.  Hence
\begin{align*}
[P]
&=\sum_{K\subseteq I}(-1)^{|K|+1}\prod_{k\in K}\left[\bigcap_{j\in J_k}R_{k,j}\right]\\
&=\sum_{K\subseteq I}(-1)^{|K|+1}\prod_{k\in K}\prod_{j\in J_k}[R_{k,j}]\\
\end{align*}
which means that the indicator function of any $P\in\Pzd$ can be written as a linear combination over $\{-1,1\}$ of products of indicator functions of negatively self-polar polytopes.  Since the indicator functions of $\Pzd$ span $\mathcal{A}_d$, this implies the theorem statement.

Now we will prove the two facts in question.  First, that every $P\in\Pzd$ is the union of a finite set of polytopes $\{Q_i\}_{i\in I}\subset\Pzd$ such that each $Q_i\subseteq -Q_i^\circ$.

Let $P\in\Pzd$ and intersect $P$ with each orthant of $\Rd$ to get $P_1,P_2,\dots,P_{2^d}$.  Clearly, $P=P_1\cup P_2\cup\cdots\cup P_{2^d}$, and we already have that each $P_i\subset -P_i^\circ$ because every orthant is negatively self-polar, but these $P_i$ are not in $\Pzd$ because they contain the origin as a vertex, not in their interior.

We can remedy this situation easily by adding a point slightly beyond the origin to each $P_i$.  We have to be careful about this, however, by choosing a point in the right direction and close enough to the origin that we still have $P_i\subseteq-P_i^\circ$ and $P_i\subset P$.

For each $P_i$, let $v_i$ be a unit vector in the interior of the orthant.  Since $P$ has the origin in its interior, there is some $\delta>0$ such that the ball of radius $\delta$ about the origin is contained in the interior of $P$.  Choose some $\mu_i>0$ such that $\mu_i\leq\delta$ and such that
\[\frac{1}{\mu_i}\geq\max_{x\in P_i}\langle x,v_i\rangle\]

Then define $p_i=-\mu_i v_i$ and $Q_i=\conv\big(P_i\cup\{p_i\}\big)$.  The fact that $|p_i|\leq\delta$ means that $Q_i\subset P$, so we still have $P=Q_1\cup Q_2\cup\cdots\cup Q_{2^d}$.  Furthermore, since $p_i$ is in the orthant negative to the orthant of $P_i$, this means that the origin is in the interior of $Q_i$, so we have $Q_i\in\Pzd$.

Now it only remains to show that we still have $Q_i\subseteq -Q_i^\circ$.   Recall that to show this, we simply need to show for any $w,x\in Q_i$ that $\langle w,x\rangle\geq-1$.  So let $w,x\in Q_i$.  Then there are some $y,z\in P_i$ and $t,\tau\in\mathbb{R}$ such that $0\leq t\leq 1$ and $0\leq\tau\leq 1$ and $w=ty+(1-t)p_i$ and $x=\tau z+(1-\tau)p_i$.  Then
\begin{align*}
\langle w,x\rangle
&=\big\langle ty+(1-t)p_i,\tau z+(1-\tau)p_i\big\rangle\\
&=t\tau\langle y,z\rangle+t(1-\tau)\langle y,p_i\rangle+(1-t)\tau\langle p_i,z\rangle+(1-t)(1-\tau)\langle p_i,p_i\rangle\\
&=t\tau\langle y,z\rangle-\mu_i t(1-\tau)\langle y,v_i\rangle-\mu_i (1-t)\tau\langle v_i,z\rangle+\mu_i^2(1-t)(1-\tau)\langle v_i,v_i\rangle\\
\end{align*}

Since $y,z$ are in the same orthant, $\langle y,z\rangle\geq 0$ and since $v_i$ is a unit vector, $\langle v_i,v_i\rangle=1$, hence
\begin{align*}
\langle w,x\rangle
&\geq -\mu_i t(1-\tau)\langle y,v_i\rangle-\mu_i (1-t)\tau\langle v_i,z\rangle+\mu_i^2(1-t)(1-\tau)\\
&\geq -\mu_i t(1-\tau)\max_{x\in P_i}\langle x,v_i\rangle-\mu_i (1-t)\tau\max_{x\in P_i}\langle x,v_i\rangle+\mu_i^2(1-t)(1-\tau)\\
&\geq -\mu_i t(1-\tau)\frac{1}{\mu_i}-\mu_i (1-t)\tau\frac{1}{\mu_i}+\mu_i^2(1-t)(1-\tau)\\
&=-t(1-\tau)-(1-t)\tau+\mu_i^2(1-t)(1-\tau)\\
&\geq -t(1-\tau)-(1-t)\tau\\
&=2t\tau-t-\tau\\
&\geq -1
\end{align*}
where the last bound can be found with some elementary calculus.

Now we have shown that every $P\in\Pzd$ is the union of a finite set of polytopes $\{Q_i\}_{i\in I}\subset\Pzd$ such that each $Q_i\subseteq -Q_i^\circ$.  To finish the proof, we need to show that every $P\subseteq -P^\circ$ is the intersection of a finite set of polytopes $\{R_j\}_{j\in J}\subset\Pzd$ such that each $R_j=-R_j^\circ$.

Let $P\in\Pzd$ and $P\subset -P^\circ$.  Then let $\{Q_1,Q_2,\dots,Q_{2^d}\}$ be the intersections of $-P^\circ$ with each orthant of $\Rd$, and let $P_i=\conv(P\cup Q_i)$ for $i=1,2,\dots,2^d$.

Let the orthant that $Q_i$ is in be denoted $\mathcal{O}_i$.
\begin{align*}
-P_i^\circ &= -(P\cup(-P^\circ\cap\mathcal{O}_i))^\circ\\
&=-P^\circ\cap-(-P^\circ\cap\mathcal{O}_i)^\circ\\
&=-P^\circ\cap-(-P^\circ\cap-\mathcal{O}_i^\circ)^\circ\\
&=-P^\circ\cap(P^\circ\cap\mathcal{O}_i^\circ)^\circ\\
&=-P^\circ\cap(P\cup\mathcal{O}_i)^{\circ\circ}\\
&\supset-P^\circ\cap(P\cup Q_i)^{\circ\circ}\\
&=-P^\circ\cap P_i\\
&=P_i
\end{align*}

Thus $P_i\subseteq-P_i^\circ$.  Then by Theorem \ref{intermediate}, there is some $R_i=-R_i^\circ$ such that $P_i\subseteq R_i=-R_i^\circ\subseteq-P_i^\circ$.

Now consider that
\begin{align*}
-P^\circ &= Q_1\cup\cdots\cup Q_{2^d}\\
&\subseteq P_1\cup\cdots\cup P_{2^d}\\
&\subseteq R_1\cup\cdots\cup R_{2^d}
\end{align*}

By taking the negative polar of these, which reverses the subset inclusions, we get
\begin{align*}
P &\supseteq -(R_1\cup\cdots\cup R_{2^d})^\circ\\
&=-R_1^\circ\cap\cdots\cap-R_{2^d}^\circ\\
&=R_1\cap\cdots\cap R_{2^d}
\end{align*}

So $R_1\cap\cdots\cap R_{2^d}\subseteq P$, and since $P\subseteq P_i\subseteq R_i$ for each $i$, we also have $P\subseteq R_1\cap\cdots\cap R_{2^d}$.  Therefore $P=R_1\cap\cdots\cap R_{2^d}$, and this completes the proof.
\end{proof}

A natural follow-up to the previous theorem is to ask what sorts of sets related to self-polar polytopes might span $\mathcal{A}_d$.  Here is one such result.

\begin{corollary}
$\mathcal{A}_d$ is spanned by the set of indicator functions of all $P\in\Pzd$ such that $P\subseteq-P^\circ$.
\end{corollary}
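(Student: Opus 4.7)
The plan is to leverage Theorem \ref{generated} directly: since every $f\in\mathcal{A}_d$ is already known to be a linear combination of products of indicator functions of negatively self-polar polytopes, it suffices to show that each such product is itself the indicator function of a single polytope in $\Pzd$ that is contained in the negative of its polar. Linear combinations of those indicators then span all of $\mathcal{A}_d$.

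First I would reduce the product to a single indicator. If $R_1,\ldots,R_k\in\Pzd$ with $R_i=-R_i^\circ$ for each $i$, then
\[
[R_1][R_2]\cdots[R_k]=[R_1\cap R_2\cap\cdots\cap R_k],
\]
and the intersection $R:=R_1\cap\cdots\cap R_k$ is again in $\Pzd$: each $R_i$ contains an origin-centered ball, so $R$ does as well, which makes $R$ full-dimensional with the origin in its interior.

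Next I would verify that $R\subseteq -R^\circ$. Using Lemma \ref{polarproperties}(7) together with the fact that $-[\,\cdot\,]=[-\,\cdot\,]$ (since negation is a homeomorphism commuting with convex hull and union with the origin),
\[
-R^\circ=-\bigl[R_1^\circ\cup\cdots\cup R_k^\circ\bigr]=\bigl[(-R_1^\circ)\cup\cdots\cup(-R_k^\circ)\bigr]=[R_1\cup\cdots\cup R_k],
\]
and clearly $R=R_1\cap\cdots\cap R_k\subseteq R_1\cup\cdots\cup R_k\subseteq[R_1\cup\cdots\cup R_k]$. So $R$ lies in the class of polytopes described in the corollary's statement.

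Finally, I would invoke Theorem \ref{generated} to write an arbitrary $f\in\mathcal{A}_d$ as a finite $\mathbb{R}$-linear combination of products of indicator functions of negatively self-polar polytopes, then apply the reduction above to each product, yielding an expression of $f$ as a linear combination of indicator functions of polytopes $R\in\Pzd$ with $R\subseteq-R^\circ$. The only step requiring any care is the polar identity for intersections, and that is handled cleanly by Lemma \ref{polarproperties}(7); no additional constructive work is needed beyond what was established in proving Theorem \ref{generated}.
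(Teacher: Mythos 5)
Your proof is correct and follows essentially the same route as the paper's: invoke Theorem \ref{generated}, rewrite each product of indicators as the indicator of the intersection, and observe that the intersection is a polytope in $\Pzd$ contained in its negative polar (which is the convex hull of the union of the $R_i$). You spell out the polar computation via Lemma \ref{polarproperties}(7) a bit more explicitly, but the substance is identical.
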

\begin{proof}
From Theorem \ref{generated}, we know that all the elements of $\mathcal{A}_d$ are linear combinations over $\mathbb{R}$ of products of indicator functions of negatively self-polar polytopes in $\Pzd$.  These products are the indicator functions of the intersections of the polytopes, and the intersection of a finite set of negatively self-polar polytopes is, of course, another polytope in $\Pzd$ which is contained in its negative polar, the convex hull of the union of the polytopes being intersected.
\end{proof}

We now state a corollary of a result due to \cite{L88}: there exists a linear transformation $\mathcal{D}:\mathcal{A}_d\to\mathcal{A}_d$ such that $\mathcal{D}([P])=[P^\circ]$ for any $P\in\Pzd$.  (The original result was about a larger algebra, of which $\mathcal{A}_d$ is a sub-algebra.)

We now turn to a corollary of a result presented by \cite{barvinok}.  (Again, the original result was about a larger algebra.)  Let $T:\Rd\to\Rd$ be a linear transformation.  Then there exists a linear transformation $\mathcal{T}:\mathcal{A}_d\to\mathcal{A}_d$ such that $\mathcal{T}([P])=[T(P)]$ for all $P\in\Pzd$.

Taken together, these two corollaries mean that for any unitary transformation $U$ of $\Rd$, there is a linear transformation $\mathcal{D}_U$ of $\mathcal{A}_d$ such that $\mathcal{D}_U([P])=[UP^\circ]$ for all $P\in\Pzd$.  In particular, $\mathcal{D}_U=\mathcal{U}\circ\mathcal{D}$, where $\mathcal{U}$ is the transformation of $\mathcal{A}_d$ that corresponds to $U$.

For any unitary transformation $U$ of $\Rd$, there is some subset $\mathcal{B}_d\subseteq\mathcal{A}_d$ spanned by those elements $f\in \mathcal{A}_d$ for which $f=\mathcal{D}_U(f)$.  There is also a subset $\mathcal{C}_d\subseteq\mathcal{A}_d$ spanned by the indicator functions of polytopes $P\in\Pzd$ such that $P=UP^\circ$.  Evidently, $\mathcal{C}_d\subseteq\mathcal{B}_d$, and it is natural to wonder whether in fact $\mathcal{C}_d=\mathcal{B}_d$.  For now, however, this remains an open question.

\section{Conclusions and Further Questions}
\label{sec:final}
In general, as with all sets of polytopes, we would like to know something about which combinatorial types are possible.  We have already answered the questions of possible vertex numbers of negatively self-polar polytopes, and we have established the possible $f$-vectors of negatively self-polar polytopes in two and three dimensions.  But the following question remains.
\begin{question}
What $f$-vectors are possible for negatively self-polar polytopes in $\mathbb{R}^4$ and higher dimensions?
\end{question}
More generally, we would like to know about the place of self-polar polytopes within the broader class of self-dual polytopes.  We established that all two and three dimensional self-dual polytopes have self-polar realizations, but we have no analogous result for higher dimensions.
\begin{question}
Does every self-dual polytope have a self-polar realization?
\end{question}
Should the answer be yes in all dimensions, then the next question would also be of interest.  By definition, every self-dual polytope has a dual automorphism on its face lattice.  For negatively self-polar polytopes, this map is an involution.  In \cite{grunshep}, Gr\"{u}nbaum and Shephard defined the rank of a self-dual polytope as the minimum period of all such maps.
\begin{question}
Are all self-dual polytopes with a rank $r$ duality map realizable as self-polar with an $r$-periodic orthogonal map?
\end{question}
In two dimensions, the answer is again clearly yes, since all polygons are self-dual with rank 2, and are realizable as self-polar by reflection over a single axis.

Lov\'{a}sz \cite{lovasz} showed that for a negatively self-polar polytope in $\Rd$ with vertices equidistant from the origin, the main diagonals (diagonals from a vertex to the vertices on its dual facet) all have the same length.  He further showed that the graph formed by the main diagonals has chromatic number $d+1$.
\begin{question}
What is the chromatic number of the graph formed by the main diagonals of a negatively self-polar polytope?
\end{question}
Lov\'{a}sz's study of these polytopes was motivated by questions about the chromatic number of $G(d,\alpha)$, the graph on the points of $S^{d-1}$ formed by connecting two points iff their distance is exactly $\alpha$, which is a subgraph of Borsuk's graph.  Lov\'{a}sz showed that if there exists a negatively self-polar polytope whose vertices are all $\sqrt{2/(\alpha^2-2)}$ from the origin, then the chromatic number of $G(d,\alpha)$ is $d+1$.  He left the following as an open question, however.
\begin{question}
For which values of $r$ and $d$ does a negatively self-polar polytope exist in $\Rd$ with all vertices $r$-distant from the origin?
\end{question}

\begin{bibdiv}
\begin{biblist}

\bib{ashley}{incollection}{
      author={Ashley, J.},
      author={Gr\"unbaum, B.},
      author={Shephard, G.C.},
      author={Stromquist, W.},
       title={Self-duality groups and ranks of self-dualities},
        date={1991},
   booktitle={Applied geometry and discrete mathematics. the {V}ictor {K}lee
  festschrift},
      editor={Gritzmann, P.},
      editor={Sturmfels, B.},
      series={DIMACS Series in Discrete Mathematics and Theoretical Computer
  Science},
      volume={4},
   publisher={American Mathematical Society},
       pages={11\ndash 50},
}

\bib{barvinok}{book}{
      author={Barvinok, Alexander},
       title={A course in convexity},
      series={Graduate studies in mathematics, v. 54},
   publisher={American Mathematical Society},
     address={Providence, RI},
        date={2002},
        ISBN={0821829688},
}

\bib{bokowski}{article}{
      author={Bokowski, J\"{u}rgen},
      author={Sturmfels, Bernd},
       title={Polytopal and nonpolytopal spheres an algorithmic approach},
        date={1987},
     journal={Israel Journal of Mathematics},
      volume={57},
      number={3},
       pages={257\ndash 271},
}

\bib{cunmixer}{article}{
      author={Cunningham, Gabe},
      author={Mixer, Mark},
       title={Internal and external duality in abstract polytopes},
        date={2016October},
     journal={Contributions to Discrete Mathematics},
      volume={12},
}

\bib{dillencourt}{article}{
      author={Dillencourt, Michael~B.},
       title={Polyhedra of small order and their hamiltonian properties},
        date={1996},
        ISSN={0095-8956},
     journal={Journal of Combinatorial Theory, Series B},
      volume={66},
      number={1},
       pages={87\ndash 122},
}

\bib{grunbaum}{book}{
      author={Gr\"{u}nbaum, B.},
       title={Convex polytopes},
     edition={Second},
      series={Graduate Texts in Mathematics},
   publisher={Springer},
        date={2003},
        ISBN={9780387004242},
}

\bib{grunshep}{article}{
      author={Gr\"{u}nbaum, B.},
      author={Shephard, G.C.},
       title={Is selfduality involutory?},
        date={1988},
        ISSN={00029890, 19300972},
     journal={The American Mathematical Monthly},
      volume={95},
      number={8},
       pages={729\ndash 733},
}

\bib{handbook15}{incollection}{
      author={Henk, Martin},
      author={Richter-Gebert, J{\"u}rgen},
      author={Ziegler, G{\"u}nter~M.},
       title={Basic properties of convex polytopes},
        date={2017},
   booktitle={Handbook of discrete and computational geometry},
      editor={Goodman, Jacob~E.},
      editor={O'Rourke, Joseph},
      editor={T\'oth, Csaba~D.},
   publisher={CRC Press},
}

\bib{L88}{article}{
      author={Lawrence, Jim},
       title={Valuations and polarity},
        date={1988December},
        ISSN={0179-5376},
     journal={Discrete and Computational Geometry},
      volume={3},
      number={4},
       pages={307\ndash 324},
}

\bib{lovasz}{article}{
      author={Lov\'{a}sz, L.},
       title={Self-dual polytopes and the chromatic number of distance graphs
  on the sphere},
        date={1983},
     journal={Acta Sci. Math.},
      volume={45},
       pages={317\ndash 323},
}

\bib{mnev}{incollection}{
      author={Mn\"ev, Nicolai~E.},
       title={The universality theorems on the classification problem of
  configuration varieties and convex polytopes varieties},
        date={1988},
   booktitle={Topology and geometry---{R}ohlin seminar},
      editor={Viro, O.~Ya.},
      series={Lecture Notes in Mathematics},
      volume={1346},
   publisher={Springer-Verlag},
       pages={527\ndash 544},
}

\bib{schramm}{article}{
      author={Schramm, Oded},
       title={How to cage an egg},
        date={1992March},
        ISSN={0020-9910},
     journal={Inventiones Mathematicae},
      volume={107},
      number={3},
       pages={543\ndash 560},
}

\bib{sturmfels}{article}{
      author={Sturmfels, Bernd},
       title={On the decidability of diophantine problems in combinatorial
  geometry},
        date={1987},
     journal={Bulletin of the American Mathematical Society},
      volume={17},
      number={1},
       pages={121\ndash 124},
}

\bib{ziegler}{book}{
      author={Ziegler, G\"{u}nter~M.},
       title={Lectures on polytopes},
      series={Graduate Texts in Mathematics},
   publisher={Springer-Verlag},
        date={1995},
        ISBN={9780387943657},
}

\end{biblist}
\end{bibdiv}

\end{document}